\titleformat{\section}[hang]
{\normalfont\Large\bfseries}
{\thesection.}{0.5em}{}
\titlespacing*{\section}{0pc}{2pc}{0.25pc}
\titleformat{\subsection}[runin]
{\normalfont\large\bfseries}
{\thesubsection}{0.5em}{}
\titlespacing{\subsection}{0pc}{1.5pc}{0.5pc}
\newcommand{\Aut}{\text{Aut}}
\newcommand{\Z}{\mathbb{Z}}
\newcommand{\Q}{\mathbb{Q}}
\newcommand{\R}{\mathbb{R}}
\renewcommand{\H}{\mathcal{H}}
\newcommand{\B}{\mathcal{B}}
\newcommand{\K}{\mathcal{K}}
\newcommand{\vphi}{\varphi}
\newcommand{\Tr}{\text{Tr}}
\renewcommand{\>}{\right\rangle}
\newcommand{\dom}{\text{dom}}
\definecolor{ggreen}{HTML}{7FDD99}
\definecolor{obnoxious}{HTML}{662288}
\newcommand{\sltimes}[1]{\mathbin{_{#1}\ltimes}}
\newcommand{\E}{\mathcal{E}}
\newcommand{\Ind}[1]{\operatorname{Ind}#1}
\newcommand{\Sd}{\operatorname{Sd}}
\newtheorem{thm}{Theorem}[section]
\newtheorem{prop}[thm]{Proposition}
\newtheorem{lem}[thm]{Lemma}
\newtheorem{cor}[thm]{Corollary}
\theoremstyle{definition}
\newtheorem{defi}[thm]{Definition}
\newtheorem{ex}[thm]{Example}
\newtheorem{rem}[thm]{Remark}
\title{\textbf{Projective representations of almost unimodular groups}}
\author{Aldo Garcia Guinto}
\address{Department of Mathematics, Michigan State University, East Lansing, MI 48824, USA\hfill \url{garci575@msu.edu}}
\date{}
\begin{document}

\begin{abstract}
Given an almost unimodular $G$, so that the Plancherel weight $\varphi_G$ on the group von Neumann algebra $L(G)$ is almost periodic, we show that the basic construction for the inclusion $L(G)^{\varphi_G} \leq L(G)$ is isomorphic to a twisted group von Neumann algebra of $G \times \Delta_G(G)\hat{\ }$ with a continuous 2-cocycle, where $\Delta_G$ is the modular function. We show that when $G$ is second countable and admits a Borel 2-cocycle, $G$ is almost unimodular if and only if the central extension $\mathbb{T} \rtimes_{(1,\omega)} G$ is almost unimodular. Using this result and the connection between $\omega$-projective representations of $G$ and the representations of $\mathbb{T} \rtimes_{(1,\omega)} G$, we show that the formal degrees of irreducible and factorial square integrable projective representations behaved similarly to their representations counterparts and obtain the Atiyah--Schmid formula in the setting of second countable almost unimodular groups with a 2-cocycle twist and a finite covolume subgroup, which uses the Murray--von Neumann dimension for certain Hilbert space modules over the twisted group von Neumann algebra with its twisted Plancherel weight. \end{abstract}

\maketitle

\section*{Introduction}    
Given a locally compact group $G$ with a Borel 2-cocycle $\omega: G \times G \to \mathbb{T}$, an \textit{$\omega$-projective representation} ($\pi,\H)$ of $G$ is a map $\pi$ from $G$ into the set $U(\H)$ of all unitaries on $\H$ such that $\pi(s)\pi(t) = \omega(s,t) \pi(st)$ and the map $s \mapsto \|\pi(s)\xi\|$ is measurable for all $\xi \in \H$. In \cite{Mac58}, Mackey introduced the theory of projective representations of locally compact second countable groups to understand the theory of representations of certain group extensions. That is, for a certain normal subgroup $N$ of a locally compact second countable group $H$, the irreducible representation theory of $H$ restricts to the irreducible representations of $N$ and the irreducible projective representations of $H/N$. In particular, Mackey showed that representations of the central extension $\mathbb{T} \rtimes_{(1,\omega)} G$ are closely related to $\omega$-projective representations of $G$. Here \textit{second countable} is used to endow $\mathbb{T} \rtimes_{(1,\omega)} G$ with a locally compact topology and if $\omega$ is continuous, second countability is not necessary (see \cite[Theorem 7.1]{Mac57}). A natural example is the left regular $\omega$-projective representation $\lambda^\omega_G(s)$ on $L^2(G)$ defined by 
    \[
         (\lambda_G^{\omega}(s)f)(t) = \omega(t^{-1},s)f(s^{-1}t), \qquad s,t \in G, f \in L^2(G)
    \]
and $L_\omega(G):= \lambda_G^\omega(G)''$ is called the \textit{twisted group von Neumann algebra}. In \cite{Sut80}, Sutherland shows that $L_\omega(G)$ is independent of $\omega$ up to cohomology and admits a canonical faithful normal semifinite weight $\varphi^\omega_G$ associated to a left Haar measure $\mu_G$. In particular, if $\omega$ is cohomologous to the trivial 2-cocycle, the above is simply the Plancherel weight $\varphi_G$ on the group von Neumann algebra $L(G)$. We call $\varphi_G^\omega$ the \textit{twisted Plancherel weight}. There has been recent activity on projective representations on locally compact groups (see \cite{Ens22} and \cite{Yang25}) and (twisted) group von Neumann algebras of locally compact groups (see \cite{Vaes25}, \cite{GGN25}, \cite{Miy25}, and \cite{Mor25}). In this paper, we study the projective representations of almost unimodular groups.

In \cite{GGN25}, we studied a class of locally compact groups whose kernel of the modular function $\Delta_G$ is open, which we called \textit{almost unimodular}, and studied their permanence properties and representation theory. Furthermore, we showed that $G$ is almost unimodular if and only if the Plancherel weight $\varphi_G$ on $L(G)$ is almost periodic in the sense of Connes \cite{Con72} (see \cite[Theorem 2.1]{GGN25}). Actually, this is also equivalent to the twisted Plancherel weight being almost periodic. Indeed, Sutherland obtain an isomorphism of Hilbert spaces $L^2(L_\omega(G),\varphi^\omega_G)\cong L^2(G)$ which carries the modular operator $\Delta_{\varphi^\omega_G}$ of $\varphi^\omega_G$ to the modular function $\Delta_G$ of $G$ acting by pointwise multiplication. Thus the modular automorphism group $\sigma^{\varphi^\omega_G}\colon \R\curvearrowright L(G)$ is determined by $\Delta_G$:
    \[
        \sigma_t^{\varphi^\omega_G}(\lambda^\omega_G(f)) = \int_G \Delta_G(s)^{it} \lambda^\omega_G(s) f(s)\ d\mu_G(s) \qquad t\in \R, f \in L^1_\omega(G),
    \]
where $L_\omega^1(G)$ is the twisted convolution group algebra (see Subsection~\ref{subsec:Twisted_gp_vNa}). Here the operator $\lambda^\omega_G(f)$ is defined as 
    \[
        \lambda^\omega_G(f):= \int_G f(s) \lambda^\omega_G(s) d\mu_G(s) \in L_\omega(G). 
    \]
Hence $G$ is almost unimodular if and only if $\varphi^\omega_G$ is almost periodic for any 2-cocycle $\omega$ (see Proposition~\ref{prop:twisted_Plancherel_weight_ap}). Although almost unimodularity of $G$ can be witnessed by any of its twisted group von Neumann algebras, factoriality cannot. In \cite{Vaes25}, Vaes constructed a locally compact group that admits a 2-cocycle such that the group von Neumann algebra is a factor while the twisted group von Neumann algebra is not. More importantly, these examples contrast the case when the group has the discrete topology, since it is known that the group von Neumann algebra being a factor forces all the twisted group von Neumann algebras to also be factors. In this case, Kleppner gives a general characterization of factoriality of the twisted group von Neumann algebra (see \cite[Corollary 1]{Kle62}). In this paper, we give examples where the twisted group von Neumann algebra is a semifinite factor while the group von Neumann algebra is purely infinite and not a factor (see Example~\ref{ex:twisted_gp_vNa_factor_but_not_gp_vNa}). The examples we give arise by constructing a (continuous) 2-cocycle on $G \times \Delta_G(G)\hat{\ }$ such that the twisted group von Neumann algebra is isomorphic to the basic construction associated to the inclusion $L(G)^{\varphi_G}\leq L(G)$, whenever $G$ is almost unimodular (see Theorem~\ref{thm:basic_construction_twisted}). Note that $G$ does not need to be \textit{second countable} in this case. 

More precisely, Mackey showed that an $\omega$-projective representation $(\pi,\H)$ of $G$ can be lifted to a representation on the central extension $\mathbb{T}\rtimes_{(1,\omega)} G$ via $\pi_\omega(a,s) := a \pi(s)$, where $(a,s) \in \mathbb{T}\rtimes_{(1,\omega)}G$ and $G$ is second countable (see \cite[Section 2]{Mac58} and \cite[Corollary of Theorem 1]{KL72}). In this article, we show that $G$ is almost unimodular if and only if $\mathbb{T} \rtimes_{(1,\omega)}G$ is almost unimodular and we can identify $L_\omega(G)$ as a corner of $L(\mathbb{T} \rtimes_{(1,\omega)} G)$ such that the restriction of the Plancherel weight on $L(\mathbb{T}\rtimes_{(1,\omega)} G)$ to this corner is the twisted Plancherel weight on $L_{\omega}(G)$(see Proposition~\ref{prop:extension_of_almost_unimodularity} and \ref{prop:decomp_of_central_extension}). In light of these results, we adapt the results of \cite{GGN25} to the projective setting. 

In Section~\ref{sec:proj_reps}, we study the behavior of the formal degrees associated with irreducible and factorial square integrable projective representations of almost unimodular groups. In the case for non-unimodular groups, the formal degree associated to an irreducible square integrable representation, introduced by Duflo and Moore, is an unbounded operator and to a factorial square integrable representation, defined by Moore, is a normal semifinite weight (see \cite{DM76} and \cite{Moo77}). One could define formal degrees for the projective setting, but it can be shown that an $\omega$-projective representation $(\pi,\H)$ of $G$ is irreducible (resp. factorial) square integrable if and only if the representation $(\pi_\omega,\H)$ of $\mathbb{T} \rtimes_{(1,\omega)} G$ is irreducible (resp, factorial) square integrable. Thus the formal degrees associated to $(\pi_\omega,\H)$ is also associated to $(\pi,\H)$ (see \cite[Theorem 2]{Ani06} for irreducible case and Proposition~\ref{prop:notion_of_proj_rep_and_rep}.\ref{part:square_integrable_semi-invariant_weight} for factorial case). Furthermore, if $G$ is almost unimodular, we showed that the \textit{formal degree operator} is diagonalizable and the \textit{formal degree weight} is almost periodic (see Theorem~\ref{thm:sq_int_irred_proj_reps} and Theorem~\ref{thm:formal_degree_is_almost_periodic}). That is, we give a projective representation analogue of \cite[Theorem 4.2]{GGN25} and \cite[Theorem 4.5 and 4.6]{GGN25}. 

In Section~\ref{sec:Murray-von_Neumann_dim_twist}, we show how the Murray--von Neumann dimension of $(L_\omega(G),\varphi^\omega_G)$-modules scales for subgroups $H\leq G$ with \emph{finite covolume} and generalize the Atiyah--Schmid formula to the setting of second countable almost unimodular groups with 2-cocycles (see Theorem~\ref{thm:general_covolume_dimension_twisted} and \ref{thm:Atiya-schmidt_formula_twisted}). When $H=\Gamma$ is a lattice in a (necessarily unimodular) locally compact second countable group $G$ and $\omega$ is trivial, the formula recovers the Atiyah--Schmid formula (see \cite[Equation (3.3)]{AS77} and also \cite[Theorem 3.3.2]{GdlHJ89}). The Atiyah--Schmid formula, when $H$ is a lattice, was generalized to projective representations in \cite[Theorem 4.3]{Ens22} and projective tempered representations in \cite[Theorem 3.5]{Yang25}. Note that the twisted Plancherel weight of $L_\omega(H)$, when $H$ is a lattice, is the usual tracial state and our results are adaptations of the theorems in \cite[Subsection 5.3]{GGN25}.

\section*{Acknowledgment} I would like to thank my advisor, Brent Nelson, for his constant support, many helpful suggestions and discussions. I would also like to thank Jun Yang for discussions on projective representations.

\section{Preliminaries}
Throughout we let $G$ denote a locally compact group, which is always assumed to be Hausdorff. We will use lattice notation for the collection of closed subgroups of $G$. That is, we write $H\leq G$ to denote that $H$ is a closed subgroup of $G$, we write $H_1\vee H_2$ for the closed subgroup generated by $H_1,H_2\leq G$, etc. Similarly for a von Neumann algebra $M$ and its collection of (unital) von Neumann subalgebras and for a Hilbert space $\H$ and its collection of closed subspaces. All homomorphisms between groups are assumed to also be continuous and all isomorphisms are assumed to also be homeomorphisms. In particular, all representations on Hilbert spaces are assumed to be strongly continuous. A representation of a von Neumann algebra $M$ will always mean a normal unital $*$-homomorphism $\pi\colon M\to B(\H)$.

Throughout, $\mu_G$ will denote a \emph{left Haar measure} on $G$, which is a non-trivial, left translation invariant Radon measure. We follow the convention in \cite{Fol16} and take a \emph{Radon measure} to be a Borel measure that is finite on compact sets, outer regular on Borel sets, and inner regular on open sets. Such measures always exist and are unique up to scaling. Additionally, $\mu_G(U)>0$ for all non-empty open sets $U$, $\mu_G(K)<\infty$ for all compact sets $K$, and the inner regularity of $\mu_G$ extends to all $\sigma$-finite subsets (see \cite[Section 2.2]{Fol16} or \cite[Section 11]{HR79} for further details). For $1\leq p\leq \infty$ we denote by $L^p(G)$ the $L^p$-space of $G$ with respect to left Haar measures, and for a given left Haar measure $\mu_G$ we will write $\|f\|_{L^p(\mu_G)}$ for the associated $p$-norm, whereas $\|f\|_\infty$ is the unambiguous $\infty$-norm. We also denote by $\mathcal{B}(G)$ the Borel $\sigma$-algebra on $G$. Recall that the \emph{modular function} $\Delta_G\colon G\to \R_+$ is the continuous homomorphism (where $\R_+:=(0,\infty)$ has its multiplicative group structure) determined by $\mu_G(E\cdot s)= \Delta_G(s) \mu_G(E)$ for $s\in G$ and $E\in \B(G)$, where $\mu_G$ is any left Haar measure on $G$. This yields the following change of variables formulas that will be used implicitly in the sequel: 
    \[ 
        \int_Gf(st) d\mu_G(s) = \Delta_G(t)^{-1} \int_G f(s) d\mu_G(s) \qquad \text{ and } \qquad\int_G f(s^{-1}) d\mu_G(s) =  \Delta_G(s)^{-1}\int_G f(s) d\mu_G(s)
    \]
for $t\in G$ and $f\in L^1(G)$. We say $G$ is \emph{unimodular} if $\Delta_G\equiv 1$.

We assume that the reader has some familiarity with modular theory for weights on von Neumann algebras, complete details can be found in \cite[Chapter VIII]{Tak03} (see also \cite[Section 1]{GGLN25} for a quick introduction to these concepts). Given a faithful normal semifinite weight $\varphi$ on a von Neumann algebra $M$, we denote
    \begin{align*}
        \sqrt{\dom}(\varphi)&:=\{x\in M\colon \varphi(x^*x)<+\infty\}\\
        \dom(\varphi)&:=\text{span}\{x^*y\colon x,y\in \sqrt{\dom}(\varphi)\}.
    \end{align*}
We write $L^2(M,\varphi)$ for the completion of $\sqrt{\dom}(\varphi)$ with respect to the norm induced by the inner product
    \[
        \<x,y\>_\varphi:= \varphi(y^*x) \qquad x,y\in \sqrt{\dom}(\varphi).
    \]
The \emph{modular conjugation} and \emph{modular operator} for $\varphi$ will be denoted by $J_\varphi$ and $\Delta_\varphi$, respectively. The \emph{modular automorphism group} of $\varphi$, which we view as an action $\sigma^\varphi\colon \R\curvearrowright M$, is then defined by
    \[
        \sigma_t^\varphi(x):= \Delta_\varphi^{it} x \Delta_\varphi^{-it}.
    \]
Then the \emph{centralizer} of $\varphi$ is the fixed point subalgebra under this action and is denoted
    \[
        M^\varphi :=\{x\in M\colon \sigma_t^\varphi(x) = x \ \forall t\in \R\}.
    \]
We also recall that $x\in M^\varphi$ if and only if $xy,yx\in \dom(\varphi)$ with $\varphi(xy) = \varphi(yx)$ for all $y\in \dom(\varphi)$ (see \cite[Theorem VIII.2.6]{Tak03}). That is, $M^\varphi$ is the largest von Neumann algebra of $M$ on which $\varphi$ is tracial.

Recall that a weight $\varphi$ is \emph{strictly semifinite} if its restriction to $M^\varphi$ is semifinite; that is, if $\dom(\varphi)\cap M^\varphi$ is dense in $M^\varphi$ in the strong (or weak) operator topology. This property has several equivalent characterizations (see \cite[Lemma 1.2]{GGLN25}), but the most relevant to this article is the existence of a faithful normal conditional expectation $\E_\varphi\colon M\to M^\varphi$. After \cite{Con72}, we say a faithful normal semifinite weight $\varphi$ is \emph{almost periodic} if its modular operator $\Delta_\varphi$ is diagonalizable. We will write $\Sd(\varphi)$ for the point spectrum of $\Delta_\varphi$, so that
    \[
        \Delta_\varphi = \sum_{\delta\in \Sd(\varphi)} \delta 1_{\{\delta\}}(\Delta_\varphi)
    \]
whenever $\varphi$ is almost periodic. Note that an almost periodic weight is automatically strictly semifinite by \cite[Proposition 1.1]{Con74}.

\subsection{Twisted group von Neumann algebra}\label{subsec:Twisted_gp_vNa}
In this subsection, we remind the reader of the twisted group von Neumann algebra associated to a locally compact group with a 2-cocycle and its canonical weight (see \cite{Sut80}). We note that Sutherland assumed that $G$ is second countable, but this assumption is not necessary for the results we need.

Recall that a \emph{(Borel) 2-cocycle} $\omega: G \times G \to \mathbb{T}$ on $G$ is a Borel function satisfying
    \[
        \omega(s,t)\omega(st,r) = \omega(t,r)\omega(s,tr)  \qquad s,t,r \in G.
    \]
We say that a 2-cocycle is \emph{normalized} if $\omega(s,e)=\omega(e,s) = 1$ for all $s \in G$. Further, a 2-cocycle is \emph{fully normalized} if it is normalized and satisfies $\omega(s,s^{-1})=1$ for all $s \in G$. Note that the pointwise conjugate $\overline{\omega}$ of a fully normalized 2-cocycle $\omega$ is also fully normalized 2-cocycle and satisfies 
    \[ 
        \overline{\omega(s,t)}=\omega(t^{-1},s^{-1}).
    \]
Two 2-cocycles $\omega_1$ and $\omega_2$ are cohomologous, denoted by $\omega_1 \sim \omega_2$, if there exists a Borel map $\rho: G \to \mathbb{T}$ such that $\rho(e) =1$ and 
    \[
        \omega_1(s,t) = \overline{\rho(s)\rho(t)}\rho(st)\omega_2(s,t) \qquad s,t \in G. 
    \]
Any 2-cocycle $\omega$ is cohomologous to a fully normalized 2-cocycle (see \cite[Section 1]{Kle74}).

Let $\lambda^\omega_G,\rho^\omega_G\colon G\to B(L^2(G))$ be the \emph{left and right regular $\omega$-projective representations} of $G$:
    \[
        [\lambda^\omega_G(s)f](t)=\omega(t^{-1},s)f(s^{-1}t) \qquad \text{ and } \qquad [\rho^\omega_G(s)f](t)=\Delta_G(s)^{1/2}\omega(t,s)f(ts)
    \]
where $s,t\in G$ and $f\in L^2(G)$. The \emph{twisted group von Neumann algebra} of $G$ is the von Neumann algebra generated by its left regular $\omega$-projective representation, $L_\omega(G):=\lambda^\omega_G(G)''$. We denote the von Neumann algebra generated by its right regular $\omega$-projective representation by $R_\omega(G):=\rho^\omega_G(G)''$. By \cite[Theorem 3.8]{Sut80}, we have that $R_{\overline{\omega}}(G)=L_\omega(G)'\cap B(L^2(G))$. By \cite[Proposition 2.4]{Sut80}, any two cohomologous 2-cocycles generate the same twisted von Neumann algebra, and so we can (and will) assume that $\omega$ is a fully normalized 2-cocycle for the rest of this subsection. For any $f\in L^1_\omega(G)$,
    \begin{align}\label{eqn:L1_function_operator}
        \lambda^\omega_G(f):=\int_G  f(t)\lambda^\omega_G(t)\ d\mu_G(t)
    \end{align}
defines an element on $L_\omega(G)$ such that $\| \lambda_G^\omega(f)\| \leq \|f\|_{L^1(\mu_G)}$, where $L^1_\omega(G)$ is the same norm space as $L^1(G)$ but equipped with the twisted convolution
    \[
        [f*_\omega g](s) = \int_G \omega(t,t^{-1}s)f(t)g(t^{-1}s)\ d\mu_G(t)=\int_G \overline{\omega(t^{-1},s)}f(t)g(t^{-1}s)\ d\mu_G(t)
    \]
and the involution
    \[
        [f^\sharp](s) = \Delta_G(s)^{-1}\overline{f(s^{-1})}.
    \]
These operations can be found in \cite[Definition 2.7]{Sut80} or \cite[Section 2]{EL69}, the convolutions will agree with the former after a change of variables. The mapping $f\mapsto \lambda^\omega_G(f)$ gives a $*$-homomorphism from $L_\omega^1(G)$ to $L_\omega(G)$ and the $*$-subalgebra $\lambda^\omega_G(L^1_\omega(G))$ is dense in $L_\omega(G)$ in the strong (and weak) operator topology (see \cite[Theorem 2.14]{Sut80}). 

Let $B_{b,c}(G)$ be the space of equivalence classes of bounded compactly supported Borel functions, where the equivalence is given by equality $\mu_G$-almost everywhere. We endow $B_{b,c}(G)$ with the twisted convolution, involution and inner product coming from being a subset of $L^1_\omega(G) \cap L^2(G)$, which is well-defined by \cite[Lemma 2.8]{Sut80}. Then $B_{b,c}(G)$ is a left Hilbert algebra, which is dense in $L^2(G)$ and $\lambda_G^\omega(B_{b,c}(G))''= L_\omega(G)$ (see \cite[Theorem 3.3]{Sut80}). We call the elements in the full Hilbert algebra associated to $B_{b,c}(G)$ by \textit{twisted left convolvers}; that is $f \in L^2(G)$ is a left convolver if $f *_\omega g \in L^2(G)$ and there exists a constant $k >0$ such that $\|f *_\omega g \|_{L^2(\mu_G)} \leq k \|g\|_{L^2(\mu_G)}$ for each $g \in L^2(G)$. The \emph{twisted Plancherel weight} associated to $\mu_G$ is a faithful normal semifinite weight defined on $L_\omega(G)_+$ by
    \[
        \varphi^\omega_G(x^*x) := \begin{cases}
            \|f\|_{L^2(\mu_G)}^2 & \text{if }x=\lambda^\omega_G(f)\text{, with $f$ a twisted left convolver} \\
            +\infty & \text{otherwise}
        \end{cases}.
    \]
In particular, we have $L^2( L_\omega(G), \varphi_G^\omega) = L^2(G)$ and the modular objects associated to the twisted Plancherel weight are 
    \begin{equation*}
        (S_{\varphi_G^\omega}f)(s)= \Delta_G(s)^{-1} \overline{f(s^{-1})}, \qquad
        (\Delta_{\varphi_G^\omega}f)(s)=\Delta_G(s)f(s),\qquad
        (J_{\varphi_G^\omega}f)(s)=\Delta_G(s)^{-1/2}\overline{f(s^{-1})},
    \end{equation*}
where $f \in L^2(G)$ and $s \in G$ (see \cite[Lemma 3.6]{Sut80}). Thus the modular automophism group ${\sigma^{\varphi^\omega_G}_t}$ is completely determined by $\Delta_G$:
    \[  
        \sigma^{\varphi^\omega_G}_t(\lambda_G^\omega(s)) = \Delta_G(s)^{it}\lambda_G^\omega(s) \qquad s \in G.
    \]  

Notice that if $\omega$ is continuous, then one can consider instead the compactly supported continuous functions with the product coming from the twisted convolution. When the 2-cocycle is cohomologous to the trivial 2-cocycle, this collapses to the group von Neumann algebra with its Plancherel weight.

\section{Almost unimodular groups with a twist}
The following result is the twisted case of \cite[Theorem 2.1]{GGN25}. In the introduction, we mentioned how a twisted Plancherel weight $\varphi^\omega_G$ on $L_\omega(G)$ is equivalent to $G$ being almost unimodular (see \cite[Definition 2.2]{GGN25}). By \cite[Proposition 1.1]{Con74}, we know that almost periodic implies strictly semifinite and the converse, in the case for $L_\omega(G)$, follows by \cite[Lemma 1.4]{GGLN25} and using the generators $\lambda_G^\omega(s)$ for $s \in G$. The other claims follow similarly to \cite[Theorem 2.1]{GGN25}, so we omit the proof. Note that the following two results do not need the assumption of second countability.

\begin{prop}\label{prop:twisted_Plancherel_weight_ap}
Let $G$ be a locally compact group equipped with a left Haar measure $\mu_G$, let $\omega:G \times G \to \mathbb{T}$ be a 2-cocycle, and let $\vphi^\omega_G$ be the associated twisted Plancherel weight on $L_\omega(G)$. Denote the modular function by $\Delta_G\colon G\to \R_+$. The following are equivalent:
    \begin{enumerate}[label=(\roman*)]
        \item $G$ is almost unimodular; \label{part:open_unimodular_part}

        \item $\varphi^\omega_G$ is strictly semifinite; \label{part:strictly_semifinite}

        \item $\varphi^\omega_G$ is almost periodic;\label{part:almost_periodic}
        
        \item $\Delta_G$ viewed as an operator affiliated with $L^\infty(G) \subset B(L^2(G))$ is diagonalizable. \label{part:mod_functional_diagonlizable}
    \end{enumerate}
In this case one has
    \[
        \Sd(\varphi^\omega_G)=\Delta_G(G) \qquad \text{ and } \qquad L_\omega(G)^{\vphi^\omega_G} = \{\lambda^\omega_G(s)\colon s\in \ker{\Delta_G}\}''\cong L_\omega(\ker{\Delta_G}).
    \]
Under the identification $L_\omega(G)^{\varphi^\omega_G} \cong L_\omega(\ker{\Delta_G})$, $\varphi^\omega_G|_{L_\omega(G)^{\varphi^\omega_G}}$ is the Plancherel weight on $L_\omega(\ker{\Delta_G})$ corresponding to the left Haar measure $\mu_G|_{\mathcal{B}(\ker{\Delta_G})}$.
\end{prop}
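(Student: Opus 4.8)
The plan is to route everything through the explicit description of the modular operator recorded in Subsection~\ref{subsec:Twisted_gp_vNa}, namely that $\Delta_{\varphi^\omega_G}$ acts on $L^2(L_\omega(G),\varphi^\omega_G)=L^2(G)$ as multiplication by $\Delta_G$. Since this multiplication operator is exactly the operator named in \ref{part:mod_functional_diagonlizable}, the equivalence \ref{part:almost_periodic}$\Leftrightarrow$\ref{part:mod_functional_diagonlizable} is immediate from the definition of almost periodicity as diagonalizability of $\Delta_\varphi$. For \ref{part:open_unimodular_part}$\Leftrightarrow$\ref{part:mod_functional_diagonlizable} I would argue purely at the level of the multiplication operator $M_{\Delta_G}$ on $L^2(G)$: it is diagonalizable precisely when the pushforward $(\Delta_G)_*\mu_G$ on $\R_+$ is purely atomic, and its point spectrum is $\{\delta : \mu_G(\Delta_G^{-1}(\delta))>0\}$ with eigenspaces $L^2(\Delta_G^{-1}(\delta))$. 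If $\ker\Delta_G$ is open, then every fibre $\Delta_G^{-1}(\delta)$ with $\delta\in\Delta_G(G)$ is a coset of an open subgroup, hence open of positive measure (equal for all $\delta$ by left invariance), so $L^2(G)=\bigoplus_{\delta\in\Delta_G(G)}L^2(\Delta_G^{-1}(\delta))$ diagonalizes $M_{\Delta_G}$ and simultaneously gives $\Sd(\varphi^\omega_G)=\Delta_G(G)$. Conversely, if $M_{\Delta_G}$ is diagonalizable then $(\Delta_G)_*\mu_G$ has an atom $\delta_0$; since $\Delta_G^{-1}(\delta_0)$ is a coset of $\ker\Delta_G$ we get $\mu_G(\ker\Delta_G)=\mu_G(\Delta_G^{-1}(\delta_0))>0$, and a closed subgroup of positive Haar measure is open, so $G$ is almost unimodular.

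For the remaining equivalence I would follow the hint preceding the statement: almost periodicity implies strict semifiniteness by \cite[Proposition 1.1]{Con74}, giving \ref{part:almost_periodic}$\Rightarrow$\ref{part:strictly_semifinite}. For \ref{part:strictly_semifinite}$\Rightarrow$\ref{part:almost_periodic} I would invoke \cite[Lemma 1.4]{GGLN25} applied to the generators $\lambda^\omega_G(s)$, which are eigenoperators of the modular automorphism group with eigenvalue $\Delta_G(s)$ (as recorded in Subsection~\ref{subsec:Twisted_gp_vNa}) and which generate $L_\omega(G)$; here strict semifiniteness supplies the faithful normal conditional expectation onto the centralizer that is needed to upgrade ``generated by eigenoperators'' to genuine diagonalizability of $\Delta_{\varphi^\omega_G}$. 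This closes the cycle of the four equivalences.

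For the ``in this case'' assertions, the inclusion $\{\lambda^\omega_G(s):s\in\ker\Delta_G\}''\subseteq L_\omega(G)^{\varphi^\omega_G}$ is clear, since $\sigma^{\varphi^\omega_G}_t(\lambda^\omega_G(s))=\Delta_G(s)^{it}\lambda^\omega_G(s)=\lambda^\omega_G(s)$ whenever $s\in\ker\Delta_G$. For the reverse inclusion I would identify the canonical conditional expectation $\E_{\varphi^\omega_G}\colon L_\omega(G)\to L_\omega(G)^{\varphi^\omega_G}$ with the spatial map given by compression by the projection onto the eigenvalue-$1$ eigenspace $L^2(\ker\Delta_G)$ of $\Delta_{\varphi^\omega_G}$; on generators this reads $\lambda^\omega_G(f)\mapsto\lambda^\omega_G(1_{\ker\Delta_G}f)$, which is meaningful precisely because $\ker\Delta_G$ is open, and its range is generated by the $\lambda^\omega_G(s)$, $s\in\ker\Delta_G$. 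Since $\ker\Delta_G$ is open, $\mu_G$ restricts to a left Haar measure there, $L^2(\ker\Delta_G)\subseteq L^2(G)$ is invariant under $\lambda^\omega_G(s)$ for $s\in\ker\Delta_G$, and restriction implements a spatial $*$-isomorphism $L_\omega(G)^{\varphi^\omega_G}\cong L_\omega(\ker\Delta_G)$ for $\omega$ restricted to $\ker\Delta_G\times\ker\Delta_G$. Finally, because $\Delta_G\equiv 1$ on $\ker\Delta_G$ (so that $\ker\Delta_G$ is unimodular) and the defining $L^2(\mu_G)$-norm of a twisted left convolver supported in $\ker\Delta_G$ coincides with its $L^2(\mu_G|_{\mathcal{B}(\ker\Delta_G)})$-norm, the restriction $\varphi^\omega_G|_{L_\omega(G)^{\varphi^\omega_G}}$ matches the twisted Plancherel weight of $L_\omega(\ker\Delta_G)$ on the defining cone. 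I expect the main obstacle to be this last identification of the centralizer together with its weight: one must verify that the conditional expectation really is the naive restriction-to-$\ker\Delta_G$ map and that the twisted-left-convolver conditions on $G$ and on $\ker\Delta_G$ are compatible, which is exactly where the openness of $\ker\Delta_G$ is used most essentially.
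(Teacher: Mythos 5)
Your proposal is correct and takes essentially the same approach as the paper: the paper's own (largely omitted) proof handles (ii)$\Leftrightarrow$(iii) exactly as you do, via \cite[Proposition 1.1]{Con74} and \cite[Lemma 1.4]{GGLN25} applied to the eigenoperators $\lambda^\omega_G(s)$, and defers all remaining claims to the argument of \cite[Theorem 2.1]{GGN25}, which is precisely the multiplication-operator/fibre-decomposition, positive-measure-subgroup-is-open, and compression-by-$1_{\ker\Delta_G}$ analysis you reconstruct. The verifications you flag as the main obstacle (that the canonical expectation acts as restriction to $\ker\Delta_G$ on generators, and that the twisted-convolver conditions on $G$ and on $\ker\Delta_G$ are compatible) are routine computations of the kind carried out in that cited proof, so they do not constitute a gap.
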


Recall that for a locally compact group $G$, let $\hat{\iota}\colon \R\to \Delta_G(G)\hat{\ }$ be the map dual to the inclusion map $\iota\colon \Delta_G(G) \hookrightarrow \R_+$; that is,
    \begin{equation*}
        (\hat{\iota}(t)\mid \delta) = ( t \mid  \iota(\delta)) = \delta^{it} \qquad t\in \R,\ \delta\in \Delta_G(G).
    \end{equation*}
When $G$ is almost unimodular, so that $\varphi_G$ is almost periodic, the modular automorphism group $\sigma^{\varphi_G}\colon \R\curvearrowright L(G)$ admits an extension $\alpha\colon \Delta_G(G)\hat{\ } \curvearrowright L(G)$ satisfying $\alpha_{\hat{\iota}(t)} = \sigma_t^{\varphi_G}$, called the \emph{point modular extension} of the modular automorphism group,  for all $t\in \R$ (see \cite[Section 1.4]{GGLN25} for more details). Furthermore, the basic construction for the inclusion $L(G)^{\varphi_G} \leq L(G)$ can be expressed as a crossed product associated to the action $\alpha : \Delta_G(G)\hat{\ }  \curvearrowright L(G)$, where $\varphi_G$ is the Plancherel weight on $L(G)$ (see \cite[Theorem 5.1]{GGN25}). We show that it can also be expressed as a twisted group von Neumann algebra, giving us a way to construct non-trivial 2-cocycles.

\begin{thm}\label{thm:basic_construction_twisted}
Let $G$ be an almost unimodular group, let $\varphi_G$ be a Plancherel weight on $L(G)$ and let $\alpha\colon \Delta_G(G)\hat{\ }\curvearrowright L(G)$ be the point modular extension of $\sigma^{\varphi_G}\colon \R\curvearrowright L(G)$.  Then 
    \[
        \langle L(G), e_{\varphi_G}\rangle \cong L(G) \rtimes_\alpha \Delta_G(G)\hat{\ } \cong L_\omega(\Delta_G(G)\hat{\ }\times G),
    \]
where $\omega$ is a continuous 2-cocycle on $\Delta_G(G)\hat{\ } \times G$ defined by 
    \[
        \omega( (\gamma_1,s_1),(\gamma_2,s_2)) =\overline{( \gamma_2 | \Delta_G(s_1))},
    \]
where $(\cdot\mid \cdot)\colon \Delta_G(G)\hat{\ }\times \Delta_G(G)\to \mathbb{T}$ is the dual pairing and $(\gamma_1,s_1), (\gamma_2,s_2) \in \Delta_G(G)\hat{\ }\times G.$ Furthermore, under the second identification, the dual weight of the Plancherel weight on $L(G)$ is the twisted Plancherel weight on $L_\omega(\Delta_G(G)\hat{\ }\times G)$. 
\end{thm}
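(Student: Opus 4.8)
The plan is to take the first isomorphism $\langle L(G),e_{\varphi_G}\rangle\cong L(G)\rtimes_\alpha\Delta_G(G)\hat{\ }$ as given by \cite[Theorem 5.1]{GGN25} and to concentrate on the second isomorphism together with the statement about weights. Write $K:=\Delta_G(G)\hat{\ }$. Since $G$ is almost unimodular, $\ker\Delta_G$ is open and hence $\Delta_G(G)\cong G/\ker\Delta_G$ is discrete, so $K$ is compact and in particular unimodular. A direct check with the bicharacter shows that $\omega$ is a continuous normalized $2$-cocycle on $K\times G$, and since $\Delta_{K\times G}(\gamma,s)=\Delta_G(s)$, I would realize both $L(G)\rtimes_\alpha K$ and $L_\omega(K\times G)$ on $L^2(K)\otimes L^2(G)\cong L^2(K\times G)$, which is simultaneously the GNS space of the dual weight $\tilde\varphi_G$ and of the twisted Plancherel weight $\varphi^\omega_{K\times G}$.

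The heart of the argument is an explicit spatial unitary. First I would record the generators of $L_\omega(K\times G)$: from the formula for the left regular $\omega$-projective representation one computes $[\lambda^\omega_{K\times G}(e,s)f](\gamma,t)=f(\gamma,s^{-1}t)$ and $[\lambda^\omega_{K\times G}(\gamma_0,e)f](\gamma,t)=(\gamma_0\mid\Delta_G(t))f(\gamma_0^{-1}\gamma,t)$, so that $\lambda^\omega_{K\times G}(e,s)=1\otimes\lambda_G(s)$ while $\lambda^\omega_{K\times G}(\gamma_0,e)$ is the $K$-translation twisted by multiplication by $(\gamma_0\mid\Delta_G(\cdot))$. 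On the crossed product side, in the standard realization of $L(G)\rtimes_\alpha K$ (with the convention $[\pi_\alpha(x)\eta](\gamma)=\alpha_{\gamma^{-1}}(x)\eta(\gamma)$) one has $[\pi_\alpha(\lambda_G(s))\eta](\gamma,t)=\overline{(\gamma\mid\Delta_G(s))}\eta(\gamma,s^{-1}t)$, using $\alpha_{\gamma^{-1}}(\lambda_G(s))=\overline{(\gamma\mid\Delta_G(s))}\lambda_G(s)$, and $[u_{\gamma_0}\eta](\gamma,t)=\eta(\gamma_0^{-1}\gamma,t)$. I would then define the unitary $W$ on $L^2(K\times G)$ by $(Wf)(\gamma,t)=\overline{(\gamma\mid\Delta_G(t))}f(\gamma,t)$ and verify that $W\lambda^\omega_{K\times G}(e,s)W^*=\pi_\alpha(\lambda_G(s))$ and $W\lambda^\omega_{K\times G}(\gamma_0,e)W^*=u_{\gamma_0}$. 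Writing $m(\gamma,t):=\overline{(\gamma\mid\Delta_G(t))}$ for the multiplier of $W$, the two required phase identities are $m(\gamma,t)\overline{m(\gamma,s^{-1}t)}=\overline{(\gamma\mid\Delta_G(s))}$ and $m(\gamma,t)\overline{m(\gamma_0^{-1}\gamma,t)}=\overline{(\gamma_0\mid\Delta_G(t))}$, both immediate from the homomorphism property of $\Delta_G$ and bilinearity of the pairing. Since $\operatorname{Ad}(W)$ carries generators to generators, it restricts to a normal $*$-isomorphism $\Phi:=\operatorname{Ad}(W)\colon L_\omega(K\times G)\to L(G)\rtimes_\alpha K$ with $\Phi(\lambda^\omega_{K\times G}(e,s))=\pi_\alpha(\lambda_G(s))$ and $\Phi(\lambda^\omega_{K\times G}(\gamma,e))=u_\gamma$.

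For the weights I would show $\tilde\varphi_G\circ\Phi=\varphi^\omega_{K\times G}$. First I match modular automorphism groups: by Sutherland's formula and $\Delta_{K\times G}=\Delta_G$, one has $\sigma^{\varphi^\omega_{K\times G}}_t(\lambda^\omega_{K\times G}(\gamma,s))=\Delta_G(s)^{it}\lambda^\omega_{K\times G}(\gamma,s)$, while standard properties of the dual weight give $\sigma^{\tilde\varphi_G}_t(\pi_\alpha(x))=\pi_\alpha(\sigma^{\varphi_G}_t(x))$ and $\sigma^{\tilde\varphi_G}_t(u_\gamma)=u_\gamma$, the latter since $K$ is unimodular. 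These agree on all generators under $\Phi$, so $\Phi\circ\sigma^{\varphi^\omega_{K\times G}}_t=\sigma^{\tilde\varphi_G}_t\circ\Phi$. Since equal modular automorphism groups determine a weight only up to a Radon--Nikodym operator affiliated with the common centralizer, I would finish by comparing the two weights on the centralizer. By the twisted version of Proposition~\ref{prop:twisted_Plancherel_weight_ap}, the centralizer of $\varphi^\omega_{K\times G}$ is $L_\omega(K\times\ker\Delta_G)$; since $\omega$ restricts trivially there (because $\Delta_G(s_1)=1$ for $s_1\in\ker\Delta_G$), the restriction of $\varphi^\omega_{K\times G}$ is the honest Plancherel weight of $K\times\ker\Delta_G$. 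On the crossed product side $\Phi$ identifies this algebra with $\pi_\alpha(L(\ker\Delta_G))\vee\lambda(K)\cong L(\ker\Delta_G\times K)$, on which $\tilde\varphi_G$ restricts to the same Plancherel weight. Agreement on the centralizer together with equal modular automorphism groups then forces, via Pedersen--Takesaki and the strict semifiniteness coming from almost periodicity, the equality $\tilde\varphi_G\circ\Phi=\varphi^\omega_{K\times G}$.

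The main obstacle I anticipate is precisely this last weight identification: producing the isomorphism $\Phi$ is a short explicit computation, but showing that it intertwines the canonical weights requires more than matching modular automorphism groups. The delicate point is pinning the Radon--Nikodym derivative down to $1$, for which I rely on the explicit description of the centralizers, where both weights collapse to the untwisted Plancherel weight of $K\times\ker\Delta_G$, together with strict semifiniteness. An alternative, more hands-on route would be to verify directly that $W$ maps the left Hilbert algebra $B_{b,c}(K\times G)$ defining $\varphi^\omega_{K\times G}$ onto the left Hilbert algebra defining $\tilde\varphi_G$, which would yield the equality of weights in one stroke but requires writing the dual-weight Hilbert algebra out explicitly.
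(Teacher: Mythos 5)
Your proposal is correct in substance, and for the hardest step it takes a genuinely different route from the paper, so a comparison is worthwhile. For the isomorphism itself you are actually \emph{more} careful than the paper: the paper simply asserts that $\lambda^\omega_{K\times G}(\gamma,s)\mapsto \lambda_K(\gamma)\pi_\alpha(\lambda_G(s))$ defines a normal unital $*$-isomorphism, while you exhibit the implementing unitary $W$ (multiplication by $m(\gamma,t)=\overline{(\gamma\mid\Delta_G(t))}$); since the two families of generators are \emph{not} equal as operators on $L^2(K\times G)$, some such spatial implementation (or an abstract uniqueness argument) is genuinely needed, and your two phase identities check out. For the weight identification the paper and you both begin by matching modular automorphism groups, but then diverge: the paper invokes \cite[Proposition VIII.3.15]{Tak03} and verifies equality directly on the $\sigma$-weakly dense invariant $*$-subalgebra spanned by $\lambda^\omega_{K\times G}(f_1\otimes g_1)^*\lambda^\omega_{K\times G}(f_2\otimes g_2)$, computing the dual weight there via the operator-valued weight $T=\sum_{\delta\in\Delta_G(G)}\hat\alpha_\delta$; you instead use the Pedersen--Takesaki Radon--Nikodym theorem to write $\tilde\varphi_G\circ\Phi=(\varphi^\omega_{K\times G})_h$ with $h$ affiliated with the common centralizer, and pin $h=1$ by comparing the restrictions to that centralizer, which are faithful normal semifinite traces by almost periodicity. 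That scheme is sound: equal modular groups give $\sigma^{\varphi^\omega_{K\times G}}$-invariance of $\tilde\varphi_G\circ\Phi$, Pedersen--Takesaki applies, and equality of the two trace restrictions forces $h=1$. What your route buys is the replacement of an integral computation on the whole algebra by abstract Radon--Nikodym theory; what it costs is that the computation does not disappear entirely, as explained next.

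Two points still need to be filled in. First, your justification ``$\sigma^{\tilde\varphi_G}_t(u_\gamma)=u_\gamma$ since $K$ is unimodular'' is incomplete: by \cite[Theorem X.1.17]{Tak03} one has $\sigma^{\tilde\varphi_G}_t(\lambda_K(\gamma))=\lambda_K(\gamma)\,\pi_\alpha\bigl((D\varphi_G\circ\alpha_\gamma:D\varphi_G)_t\bigr)$ (unimodularity of $K$ only kills the modular-function factor), so you also need the triviality of the Connes cocycle, i.e. $\varphi_G\circ\alpha_\gamma=\varphi_G$ for all $\gamma\in K$ --- exactly the point the paper makes explicitly. Second, your assertion that $\tilde\varphi_G$ restricts to the Plancherel weight of $K\times\ker\Delta_G$ on $\lambda(K)\vee\pi_\alpha(L(\ker\Delta_G))$ is stated without proof; it is true, but establishing it requires a dual-weight computation of precisely the kind the paper performs (only on a smaller subalgebra), for instance via the operator-valued weight $T$ above, or by noting that $\alpha$ acts trivially on $L(\ker\Delta_G)$ so this corner is the crossed product of a trivial action, identified with $L(K)\,\bar\otimes\, L(\ker\Delta_G)$ carrying the tensor product of the normalized trace on $L(K)$ and $\varphi_G|_{L(\ker\Delta_G)}$. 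Once that restriction is verified, your proof is complete.
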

\begin{proof}
Denote $K := \Delta_G(G)\hat{\ }$. The first equality comes from \cite[Theorem  5.1]{GGN25}, with $e_{\varphi_G}$ being mapped to $p_K:=\int_K \lambda_K(\gamma) d\mu_K(\gamma)$, where $\mu_K$ is the normalized Haar measure on $K$. We show that the latter equality holds. Recall that $L(G)\rtimes_\alpha K$ is generated by the operators $(\lambda_K(\gamma)f)(\gamma_1,t) = f(\gamma^{-1}\gamma_1,t)$ and $[\pi_\alpha(\lambda_G(s))f](\gamma_1,t) = \overline{(\gamma_1\mid \Delta_G(s))}f(\gamma_1,s^{-1}t)$, where $(\gamma_1, t) \in K \times G$, $s \in G$, $\gamma \in K$, and $ f \in L^2(K \times G)$. Thus the map that sends $\lambda_{K\times G}^\omega(\gamma,s)$ to $\lambda_{K}(\gamma)\pi_\alpha(\lambda_G(s))$ defines the desired normal unital $*$-isomorphism.

Let $\varphi_{K \times G}^\omega$ be a twisted Plancherel weight on $L_\omega(K \times G)$ and let $\widetilde{\varphi_G}$ be a dual weight on $L(G) \rtimes_\alpha K$ associated to a fixed Plancherel weight $\varphi_G$ on $L(G)$. Then $\psi:=\widetilde{\varphi_G} \circ \Phi$ is a faithful normal semifinite weight on $L_\omega(K \times G)$, where $\Phi: L_\omega(K \times G) \to L(G) \rtimes_\alpha K$ is the isomorphism defined above. The claim that $\psi$ agrees with $\varphi_{K \times G}^\omega$ follows by \cite[Proposition VIII.3.15]{Tak03}. That is, $\psi \circ \sigma^{\widetilde{\varphi_G}}_t = \psi$ for all $t \in \R$ and they agree in a $\sigma$-weakly dense $*$-subalgebra of $L_\omega(K \times G)$. For each $\gamma \in K$, the Connes' cocycle derivative $D(\varphi_G \circ \alpha_\gamma : \varphi_G)$ is trivial, since $\varphi_G \circ \alpha_\gamma = \varphi_G$. Then for each $t \in \R$, we have $\sigma^{\widetilde{\varphi_G}}_t(\pi_\alpha(x)) = \pi_\alpha(\sigma^{\varphi_G}(x))$ for all $x \in L(G)$ and $\sigma^{\widetilde{\varphi_G}}_t(\lambda_K(\gamma)) = \lambda_K(\gamma)$ for all $\gamma \in K$ (see \cite[Theorem X.1.17]{Tak03}). Thus, $\psi \circ \sigma^{\widetilde{\varphi_G}}_t = \psi$ for all $t \in \R$ by applying $\Phi$ and $\Delta_{K\times G}(\gamma, s) = \Delta_G(s)$ for all $(\gamma, s) \in K \times G$. Consider the following $\sigma$-weakly dense $*$-subalgebra $\text{span}\{\lambda_{K \times G}^\omega(f_1\otimes g_1)^*\lambda_{K \times G}^\omega(f_2\otimes g_2): f_1,f_2 \in B_{b,c}(G), g_1,g_2 \in K_{b,c}(K) \}$, where $(f\otimes g )(\gamma, s)= f(\gamma)g(s)$ for all $(\gamma,s) \in K \times G$. By \cite[Theorem X.1.17]{Haa79}, the unique operator valued weight $T: L(G) \rtimes_\alpha K \to \pi_\alpha(L(G))$ such that $\widetilde{\varphi_G} (x) = \varphi_G \circ \pi_\alpha^{-1}(T(x))$ for all $x \in (L(G) \rtimes_\alpha K)_+$ is implemented by $Tx = \sum_{\delta \in \Delta_G(G)} \hat{\alpha}_\delta(x)$, where $\hat{\alpha}$ is the dual action of $\Delta_G(G)$ on $L(G) \rtimes_\alpha K$. For each $\delta \in \Delta_G(G)$, the dual action $\hat{\alpha}_\delta$ satisfies $\hat{\alpha}_\delta(\pi_\alpha(x)) = \pi_\alpha(x)$ and $\hat{\alpha}_\delta(\lambda_K(\gamma)) = \overline{(\gamma | \delta)} \lambda_K(\gamma)$ for all $x \in L(G)$ and $\gamma \in K$. Thus for $f_1,f_2\in B_{b,c}(G), g_1,g_2 \in K_{b,c}(K)$, we obtain
    \begin{align*}
        \psi\left(\lambda_{K \times G}^\omega(f_1\otimes g_1)^*\lambda_{K \times G}^\omega(f_2\otimes g_2)\right) &= \widetilde{\varphi_G}\left(\pi_\alpha(\lambda_G(g_1)^*)\lambda_K(f_1)^*\lambda_K(f_2)\pi_\alpha(\lambda_G(g_2)) \right)\\
        &=\varphi_G \circ \pi_\alpha^{-1}\left(\pi_\alpha(\lambda_G(g_1)^*)T\left(\lambda_K(f_1^\sharp *f_2)\right)\pi_\alpha(\lambda_G(g_2))\right)\\
        &= \langle f_2,f_1  \rangle_{L^2(\mu_K)} \varphi_G\left(\lambda_G(g_1)^*\lambda_G(g_2)\right)\\
        &= \langle f_2,f_1  \rangle_{L^2(\mu_K)}\langle g_2,g_1  \rangle_{L^2(\mu_G)} = \varphi_{K\times G}^\omega\left(\lambda_{K \times G}^\omega(f_1\otimes g_1)^*\lambda_{K \times G}^\omega(f_2\otimes g_2)\right),    
    \end{align*}
as claimed.
\end{proof}

We now give examples of groups admitting a 2-cocycle such that the twisted group von Neumann algebra is a factor, but the group von Neumann algebra is not. This is different from the examples that Vaes constructed since those examples are of locally compact groups admitting a 2-cocycle such that the group von Neumann algebra is a factor but not the twisted group von Neumann algebra (see \cite{Vaes25}).

\begin{ex}\label{ex:twisted_gp_vNa_factor_but_not_gp_vNa}
Let $G$ be an almost unimodular group, let $\Delta_G(G)\hat{\ }$ be the dual group of $\Delta_G(G)$ and $\omega$ be the 2-cocycle defined in Theorem~\ref{thm:basic_construction_twisted}. If $L(\ker\Delta_G)$ is a factor, then $L_\omega(\Delta_G(G)\hat{\ }\times G)$ is also a factor by \cite[Theorem 5.1]{GGN25} and Theorem~\ref{thm:basic_construction_twisted}. But $L(\Delta_G(G)\hat{\ } \times G) \cong L(\Delta_G(G)\hat{\ }) \bar\otimes L(G)$ is not a factor since $\Delta_G(G)\hat{\ }$ is an abelian group. We now give explicit examples where $L(\ker\Delta_G)$ is a factor (see \cite[Example 5.3.2 and 5.3.3]{GGN25}). 

Let $\alpha \colon \text{GL}_2(\R) \curvearrowright \R^2$ be the action by matrix multiplication. We consider a countable intermediate subgroup $\text{SL}_2(\Z)\leq H_1\leq \text{GL}_2(\R)$ and restricting $\alpha$ to $H_1$, we set $G_1:= H_1 \sltimes{\alpha} \R^2$. By \cite[Example 5.3.2]{GGN25}, we have that $G_1$ is almost unimodular, $L(\ker{\Delta_{G_1}})$ is a separable non-hyperfinite factor of type $\mathrm{II}_\infty$ and $L(G_1)$ is a separable non-injective factor of type: $\mathrm{II}_\infty$ if $\det(H_1)=\{1\}$;  $\mathrm{III}_\lambda$ if $\det(H_1)=\lambda^\Z$ for some $0< \lambda <1$; and $\mathrm{III}_1$ if $\det(H_1)$ is dense in $\R_+$. In particular, $L_\omega(\Delta_{G_1}(G_1)\hat{\ }\times G_1)$ is a semifinite factor and $L(\Delta_{G_1}(G_1)\hat{\ }\times G_1)$ is a purely infinite non-factor if $\det(H_1)$ is non-trivial.

Let $\text{UT}_2(\R)$ denote the upper triangular matrices with real entries and let $\alpha \colon \text{UT}_2(\R) \curvearrowright \R^2$ be the action by matrix multiplication. Consider a countable intermediate subgroup $\text{N}_2(\Q)\leq H_2\leq \text{UT}_2(\R)$, where $\text{N}_2(\Q)=\text{UT}_2(\R) \cap \text{SL}_2(\Q)$. Restricting $\alpha$ to $H_2$, we set $G_2:= H_2 \sltimes{\alpha} \R^2$. By \cite[Example 5.3.3]{GGN25}, we have that $G_2$ is almost unimodular, $L(\ker{\Delta_{G_2}})$ is a separable hyperfinite factor of type $\mathrm{II}_\infty$ and $L(G_2)$ is the unique separable injective factor of type: $\mathrm{II}_\infty$ if $\det(H_2)=\{1\}$; $\mathrm{III}_\lambda$ if $\det(H_2)=\lambda^\Z$ for some $0< \lambda <1$; and $\mathrm{III}_1$ if $\det(H_2)$ is dense in $\R_+$. In particular, $L_\omega(\Delta_{G_2}(G_2)\hat{\ }\times G_2)$ is a semifinite factor and $L(\Delta_{G_2}(G_2)\hat{\ }\times L(G_2)$ is a purely infinite non-factor when $\det(H_2)$ is non-trivial. $\hfill\blacksquare$
\end{ex}

To move the results in \cite[Section 4 and Subsection 5.3]{GGN25} to the twisted case, we remind the reader of the central extension associated to a Borel 2-cocycle $\omega$ of a locally compact second countable group $G$ to a compact abelian second countable group $A$ written additively. Note that from here on out, we assume that $G$ is second countable.

Recall that a normalized Borel 2-cocycle of $G$ over $A$ is a map $\omega: G \times G \to  A$ satisfying $\omega(e,s) =\omega(s,e) =0$ and the cocycle identity
    \[
        \omega(s,t)+\omega(st,r) = \omega(t,r)+\omega(s,tr),
    \]
where $s,t,r \in G$. In the case that $\omega$ is a 2-cocycle over $\mathbb{T}$, we will just say it is a 2-cocycle. We consider the Borel cocycle semidirect product $A\rtimes_{(1,\omega)} G$ associated to the Borel cocycle action $(1,\omega)\colon G \curvearrowright A$, where $1: G \to \Aut(A)$ is the trivial map (see \cite[Section 2]{Mac58} and \cite[Section 3]{EL69}). This Borel group is the set $A \times G$ provided with the following operations 
    \[
        (a,s) (b,t) = (a+b+\omega(s,t),st) \qquad \text{ and } \qquad (a,s)^{-1} = (-a-\omega(s,s^{-1}),s^{-1}),
    \]
and admits a left invariant measure $\mu_A \times \mu_G$, where $(a,s),(b,t) \in A \rtimes_{(1,\omega)}G$, $\mu_A$ is a normalized measure on $A$ and $\mu_G$ is a left Haar measure on $G$. By \cite[Theorem 7.1]{Mac57}, there exists a unique locally compact topology on $A\rtimes_{(1,\omega)} G$ such that it becomes a locally compact group and its left Haar measure is $\mu_A \times \mu_G$. This topology is called the Weil's topology (see \cite[Section 1]{Kle74} and \cite[Section 62]{Hal50}). In particular, the modular function in this case is 
    \begin{equation}\label{eqp:modular_function_of_central_extension}
        \Delta_{A\rtimes_{(1,\omega)} G}(a,s)= \Delta_G(s) \quad (a,s)\in A\rtimes_{(1,\omega)} G.
    \end{equation}
It follows that $G$ is non-unimodular if and only if  $A\rtimes_{(1,\omega)} G$ is non-unimodular. Identifying $A \cong A \times e$ (as locally compact groups) gives us that $A$ is a compact normal subgroup of $A\rtimes_{(1,\omega)} G$. Then $A\rtimes_{(1,\omega)} G$ is a topological central extension of $A$ by $G$, since $A$ is in the center of the extension. We also have that $(A\rtimes_{(1,\omega)} G)/A \cong G$ (as locally compact groups). If the 2-cocycle over $A$ is continuous, then the topology on the cocycle semidirect product is the product topology (see \cite[Lemma 2]{Kle74}). 

The following results further establish the connection between the structure of $A \rtimes_{(1,\omega)} G$ and $G$ equipped with a Borel 2-cocycle $\omega$ over $A$.

\begin{prop}\label{prop:extension_of_almost_unimodularity}
Let $\omega$ be a normalized 2-cocycle of a locally compact group $G$ over a compact abelian group $A$. Then $G$ is almost unimodular if and only if $A\rtimes_{(1,\omega)} G$ is almost unimodular, whenever $\omega$ is continuous or $G$ and $A$ are second countable. 
\end{prop}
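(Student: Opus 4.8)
The plan is to reduce the entire equivalence to the modular-function computation recorded in \eqref{eqp:modular_function_of_central_extension}. Write $H := A\rtimes_{(1,\omega)} G$. The key structural object is the projection $q\colon H\to G$, $(a,s)\mapsto s$, which is precisely the quotient map under the identification $H/A\cong G$ noted above; it is a continuous, open, surjective homomorphism with kernel $A$, where openness is the standard fact that quotient maps of topological groups are open. The hypothesis that $\omega$ is continuous or that $G$ and $A$ are second countable is used \emph{only} to guarantee (via Weil's theorem, i.e.\ \cite[Theorem 7.1]{Mac57}) that $H$ is a genuine locally compact group, so that $\Delta_H$ and $q$ are well defined; once that is in hand, the argument is purely topological.

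First I would record that, by \eqref{eqp:modular_function_of_central_extension}, $\Delta_H = \Delta_G\circ q$, and therefore $\ker\Delta_H = q^{-1}(\ker\Delta_G)$. Since a locally compact group is almost unimodular exactly when the kernel of its modular function is open, the proposition is equivalent to the assertion that $\ker\Delta_G$ is open in $G$ if and only if $\ker\Delta_H$ is open in $H$. For the forward direction, if $\ker\Delta_G$ is open then continuity of $q$ makes $\ker\Delta_H = q^{-1}(\ker\Delta_G)$ open, so $H$ is almost unimodular. For the converse, if $\ker\Delta_H$ is open then openness and surjectivity of $q$ give that $\ker\Delta_G = q\bigl(q^{-1}(\ker\Delta_G)\bigr) = q(\ker\Delta_H)$ is open (using $q(q^{-1}(S)) = S$ for surjective $q$), so $G$ is almost unimodular. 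This closes the equivalence. (Equivalently, one could phrase both directions through the observation that $\Delta_H(H)=\Delta_G(G)$ as subgroups of $\R_+$ and that almost unimodularity is equivalent to discreteness of this image, but the open-map formulation is cleaner.)

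There is no hard analytic step here: the whole content is the factorization $\Delta_H=\Delta_G\circ q$ together with the identification of $q$ as an open continuous quotient. The one point requiring mild care is the converse direction. When $\omega$ is continuous the topology on $H$ is the product topology, so one is tempted to argue directly that $\ker\Delta_H = A\times\ker\Delta_G$ is open in $A\times G$; but this shortcut is unavailable in the Weil-topology (Borel-cocycle) case, where the topology need not be the product topology. Using instead that the quotient map $q$ is open handles both cases uniformly, which is exactly why I route the converse through the openness of $q$ rather than through the explicit product description of $\ker\Delta_H$.
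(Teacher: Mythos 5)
Your main argument is correct, but it takes a genuinely different route from the paper's. The paper splits into two cases: when $G$ and $A$ are second countable it invokes the characterization that a second countable locally compact group is almost unimodular precisely when the image of its modular function is \emph{countable} (\cite[Proposition 2.3]{GGN25}), and concludes from equation (\ref{eqp:modular_function_of_central_extension}) that $\Delta_{A\rtimes_{(1,\omega)}G}(A\rtimes_{(1,\omega)}G)=\Delta_G(G)$; when $\omega$ is continuous it instead uses that the topology on $A\rtimes_{(1,\omega)}G$ is the product topology, so that $\ker\Delta_{A\rtimes_{(1,\omega)}G}=A\times\ker\Delta_G$ is open exactly when $\ker\Delta_G$ is. You replace both cases by a single topological argument: the projection $q\colon A\rtimes_{(1,\omega)}G\to G$ is a continuous, open, surjective homomorphism (the quotient by the central compact subgroup $A$, using the paper's identification $(A\rtimes_{(1,\omega)}G)/A\cong G$ as locally compact groups), the modular function factors as $\Delta_{A\rtimes_{(1,\omega)}G}=\Delta_G\circ q$, and openness of the kernel passes in both directions along a continuous open surjection. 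This is more uniform and more elementary: it uses only the definition of almost unimodularity (openness of the kernel), with the hypotheses on $\omega$, $G$, $A$ entering solely to guarantee that $A\rtimes_{(1,\omega)}G$ is a locally compact group and that the quotient identification holds. What the paper's two-case proof buys in exchange is brevity given the cited results, and in the second countable case it never needs to know that $q$ is open or that the quotient identification is topological --- equality of the images plus the countability criterion suffices.

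One caveat: your closing parenthetical is wrong as stated. Almost unimodularity is \emph{not} equivalent to discreteness of $\Delta_G(G)$ as a subspace of $\R_+$; discreteness of the image implies the kernel is open, but the converse fails. Indeed, Example~\ref{ex:twisted_gp_vNa_factor_but_not_gp_vNa} exhibits almost unimodular groups whose modular function has image dense in $\R_+$ (when $\det(H_1)$ is dense); for second countable groups the correct criterion is countability of the image, which is exactly what the paper's proof uses. Since you explicitly route the proof through the open-map formulation and do not rely on this remark, it does not affect the validity of your argument, but the parenthetical should be deleted or corrected.
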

\begin{proof}
when the group is second countable, almost unimodularity is equivalent to the image of the modular function being countable (see  \cite[Proposition 2.3]{GGN25}). Thus the claim follows, when $A$ and $G$ are second countable, by equation (\ref{eqp:modular_function_of_central_extension}), since $\Delta_{A\rtimes_{(1,\omega)} G}(A\rtimes_{(1,\omega)} G)=\Delta_G(G)$. In the case that $\omega$ is continuous, the claim follows from the topology on $A\rtimes_{(1,\omega)} G$ being the product topology.
\end{proof}

For each $\gamma \in A\hat{\ } $, we define a projection $p_\gamma$ on $L^2(A\rtimes_{(1,\omega)} G)$ by
    \begin{equation}\label{eqn:projections_onto_the_twist}
        (p_{G,\gamma} f)(a,s) := \int_A (\gamma\mid b)[\lambda_{A\rtimes_{(1,\omega)} G}(b,e)^* f](a,s) d\mu_A(b) \qquad f \in L^2(G), (a,s) \in A\rtimes_{(1,\omega)} G,
    \end{equation}
where $(\cdot\mid\cdot)$ denote dual pairings between locally compact abelian groups. Since $A$ is in the center of $A\rtimes_{(1,\omega)} G$, we have that $p_{G,\gamma}$ is a central projection in $L(A\rtimes_{(1,\omega)} G)$. By a change of variables, $f \in p_{G,\gamma} L^2(A\rtimes_{(1,\omega)} G)$ if $f(a,s) = \overline{(\gamma\mid a)}f(0,s)$. We define a unitary $U_{G,\gamma}:p_{G,\gamma} L^2(A\rtimes_{(1,\omega)} G) \to  L^2(G)$ via 
    \begin{equation}\label{eqn:unitary_of_the_extension}
        (U_{G,\gamma} f)(s):=f(0,s)
    \end{equation} 
and it satisfies
    \begin{equation}\label{eqn:equivalence_of_twisted_and_extension}
        U_{G,\gamma}  \lambda_{A\rtimes_{(1,\omega)} G}(a,s) = (\gamma \mid a)\lambda_G^{\gamma \omega}(s) U_{G,\gamma} \qquad (a,s) \in A\rtimes_{(1,\omega)} G, \gamma \in A\hat{\ },
    \end{equation}
where $\gamma\omega(s,t)= (\gamma | \omega(s,t))$ is a 2-cocycle and we extent it to a partial isometry on $L^2(A\rtimes_{(1,\omega)} G)$. Note that $\{p_{G,\gamma}\}_{\gamma \in A\hat{\ }}$ is a pairwise orthogonal family of projection that sum up to 1. The following result is a decomposition of the group von Neumann algebra of $A\rtimes_{(1,\omega)} G$ analogous to \cite[Theorem 2, Theorem 3]{EL69} and \cite[Theorem 5.3]{Edw69}. Although this is well-known to experts, we provide the proof for completeness.

\begin{prop}\label{prop:decomp_of_central_extension}
Let $\omega$ be a normalized 2-cocycle of a locally compact second countable group $G$ over a compact second countable abelian group $A$. Then there exists a family of pairwise orthogonal central projections $\{p_{G,\gamma}\}_{ \gamma \in A\hat{\ }}$ summing up to 1, defined in equation (\ref{eqn:projections_onto_the_twist}), such that 
    \[
        L(A\rtimes_{(1,\omega)} G) = \bigoplus_{\gamma \in A\hat{\ }} L(A\rtimes_{(1,\omega)} G)p_{G,\gamma},
    \]
and for each $\gamma \in A\hat{\ }$, we have a normal, unital $*$-homomorphism $\Omega_{G,\gamma} :L(A\rtimes_{(1,\omega)} G) \to  L_{\gamma\omega}(G)$ satisfying
    \[
        \Omega_{G,\gamma}(\lambda_{A\rtimes_{(1,\omega)} G}(a,s)) = (\gamma|a)\lambda^{\gamma\omega}_G(s) \qquad  (a,s)\in A\rtimes_{(1,\omega)} G,
    \]
where $\gamma\omega(s,t)= (\gamma | \omega(s,t))$ is a 2-cocycle of $G$. Furthermore, for each $\gamma \in A\hat{\ }$, $\Omega_{G,\gamma}$ restricted to $ L(A\rtimes_{(1,\omega)} G)p_{G,\gamma}$ is an isomorphism and $\varphi_{A\rtimes_{(1,\omega)} G}\circ \Omega_{\gamma}^{-1}$ is the twisted Plancherel weight on $L_{\gamma\omega}(G)$, where $\varphi_{A\rtimes_{(1,\omega)} G}$ is the Plancherel weight on $L(A\rtimes_{(1,\omega)} G)$.
\end{prop}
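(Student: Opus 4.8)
The plan is to construct $\Omega_{G,\gamma}$ spatially and then read off every assertion from the intertwining relation (\ref{eqn:equivalence_of_twisted_and_extension}). Since the discussion preceding the statement already records that $\{p_{G,\gamma}\}_{\gamma\in A\hat{\ }}$ is a pairwise orthogonal family of central projections summing to $1$, and $A\hat{\ }$ is countable as $A$ is compact second countable, the central direct sum decomposition $L(A\rtimes_{(1,\omega)} G)=\bigoplus_{\gamma}L(A\rtimes_{(1,\omega)} G)p_{G,\gamma}$ is immediate from the general theory of partitions of unity by central projections. For the homomorphism I set $\Omega_{G,\gamma}(x):=U_{G,\gamma}xU_{G,\gamma}^*$, where $U_{G,\gamma}$ is the partial isometry of (\ref{eqn:unitary_of_the_extension}) with $U_{G,\gamma}^*U_{G,\gamma}=p_{G,\gamma}$ and final space $L^2(G)$; since $p_{G,\gamma}$ is central, $\Omega_{G,\gamma}$ factors through the compression $x\mapsto xp_{G,\gamma}$ and is a normal unital $*$-homomorphism. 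Conjugating the generators by (\ref{eqn:equivalence_of_twisted_and_extension}) gives $\Omega_{G,\gamma}(\lambda_{A\rtimes_{(1,\omega)} G}(a,s))=(\gamma\mid a)\lambda^{\gamma\omega}_G(s)$, and as these already recover every $\lambda^{\gamma\omega}_G(s)$ (take $a=e$ and vary $s$), the image is exactly $L_{\gamma\omega}(G)$. Restricted to the summand $L(A\rtimes_{(1,\omega)} G)p_{G,\gamma}$, the map $\Omega_{G,\gamma}$ is implemented by the unitary $U_{G,\gamma}\colon p_{G,\gamma}L^2(A\rtimes_{(1,\omega)} G)\to L^2(G)$, hence is injective and therefore an isomorphism onto $L_{\gamma\omega}(G)$.

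For the weight statement I would verify the identity on a convenient core and then mirror the uniqueness argument used in the proof of Theorem~\ref{thm:basic_construction_twisted}. Fix $g\in B_{b,c}(G)$ and put $F(a,s):=\overline{(\gamma\mid a)}\,g(s)$ on $A\rtimes_{(1,\omega)} G$. Then $F$ satisfies the corner condition $F(a,s)=\overline{(\gamma\mid a)}F(e,s)$, so $\lambda_{A\rtimes_{(1,\omega)} G}(F)\in L(A\rtimes_{(1,\omega)} G)p_{G,\gamma}$; integrating the generator formula against $F$ and using $\int_A|(\gamma\mid a)|^2\,d\mu_A(a)=\mu_A(A)=1$ gives $\Omega_{G,\gamma}(\lambda_{A\rtimes_{(1,\omega)} G}(F))=\lambda^{\gamma\omega}_G(g)$ together with $U_{G,\gamma}F=g$. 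Computing the Plancherel weight directly,
\[
\varphi_{A\rtimes_{(1,\omega)} G}\!\left(\lambda_{A\rtimes_{(1,\omega)} G}(F)^*\lambda_{A\rtimes_{(1,\omega)} G}(F)\right)=\|F\|_{L^2(\mu_A\times\mu_G)}^2=\left(\int_A|(\gamma\mid a)|^2\,d\mu_A(a)\right)\|g\|_{L^2(\mu_G)}^2=\|g\|_{L^2(\mu_G)}^2,
\]
which is precisely $\varphi^{\gamma\omega}_G(\lambda^{\gamma\omega}_G(g)^*\lambda^{\gamma\omega}_G(g))$. Equivalently, $U_{G,\gamma}$ carries the GNS vector $\Lambda_{\varphi_{A\rtimes_{(1,\omega)} G}}(\lambda_{A\rtimes_{(1,\omega)} G}(F))=F$ to $\Lambda_{\varphi^{\gamma\omega}_G}(\Omega_{G,\gamma}(\lambda_{A\rtimes_{(1,\omega)} G}(F)))=g$, so $(U_{G,\gamma},\Omega_{G,\gamma})$ is a spatial isomorphism of the GNS data on the $\sigma$-weakly dense $*$-subalgebra generated by the $\lambda_{A\rtimes_{(1,\omega)} G}(F)$ with $F$ of corner type. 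As this subalgebra is a core for the restriction of $\varphi_{A\rtimes_{(1,\omega)} G}$ to the summand, the weights agree, i.e. $\varphi_{A\rtimes_{(1,\omega)} G}\circ\Omega_{G,\gamma}^{-1}=\varphi^{\gamma\omega}_G$.

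I expect the genuine work to sit in the weight step rather than the algebraic identity, and two points there need care. First, one must confirm $F(a,s)=\overline{(\gamma\mid a)}g(s)$ lies in $B_{b,c}(A\rtimes_{(1,\omega)} G)$: boundedness is clear, and compactness of $\supp F=A\times\supp g$ in the Weil topology follows because the quotient map $A\rtimes_{(1,\omega)} G\to G$ is proper (as $A$ is compact normal), so the preimage $A\times\supp g$ of the compact set $\supp g$ is compact. Second, one must justify that these corner functions form a core for the restricted weight; this follows since they are dense in $p_{G,\gamma}L^2(A\rtimes_{(1,\omega)} G)$ and close up into a $*$-subalgebra under twisted convolution (the corner being a direct summand). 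The spatial GNS intertwining above is the cleanest route, but one can instead follow Theorem~\ref{thm:basic_construction_twisted} and invoke \cite[Proposition VIII.3.15]{Tak03}: by (\ref{eqp:modular_function_of_central_extension}) both $\varphi_{A\rtimes_{(1,\omega)} G}\circ\Omega_{G,\gamma}^{-1}$ and $\varphi^{\gamma\omega}_G$ scale $\lambda^{\gamma\omega}_G(s)$ by $\Delta_G(s)^{it}=\Delta_{A\rtimes_{(1,\omega)} G}(a,s)^{it}$, hence share a modular automorphism group and agree on the dense $*$-subalgebra, forcing equality.
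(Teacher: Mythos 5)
Your construction is exactly the paper's: define $\Omega_{G,\gamma}(x)=U_{G,\gamma}xU_{G,\gamma}^*$, read the generator formula off (\ref{eqn:equivalence_of_twisted_and_extension}), note the restriction to the corner is unitarily implemented, and identify the weight by testing on corner-type functions (your $F$ is precisely the paper's $U_{G,\gamma}^*f$). The one caveat concerns the step you call the ``cleanest route.'' Spatial GNS intertwining on a $\sigma$-weakly dense $*$-subalgebra, together with an appeal to that subalgebra being a ``core,'' is not by itself a proof: two faithful normal semifinite weights can agree on a dense $*$-subalgebra of their domains without being equal, and the notion of core you invoke is doing exactly the work that needs justification. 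What makes the conclusion legitimate is the invariance hypothesis of \cite[Proposition VIII.3.15]{Tak03} --- that $\psi:=\varphi_{A\rtimes_{(1,\omega)}G}\circ\Omega_{G,\gamma}^{-1}$ is invariant under $\sigma^{\varphi_G^{\gamma\omega}}$, which you get because both weights scale $\lambda_G^{\gamma\omega}(s)$ by $\Delta_G(s)^{it}$, so the modular flows coincide and preserve the test subalgebra. In other words, the route you present as an optional ``alternative'' is the actual proof, and it is the paper's proof. Two smaller points. First, your observation that $F\in B_{b,c}(A\rtimes_{(1,\omega)}G)$ (via properness of the quotient map, since the kernel $A$ is compact) is a genuinely tidier way to see that $F$ is a left convolver than the paper's explicit convolution computation; but you should still justify that $\lambda_{A\rtimes_{(1,\omega)}G}(F)$ lies in the corner $L(A\rtimes_{(1,\omega)}G)p_{G,\gamma}$ --- a vector lying in $p_{G,\gamma}L^2$ does imply the corresponding left convolution operator lies in the corner (e.g.\ by character orthogonality one checks $p_{G,\gamma}\lambda_{A\rtimes_{(1,\omega)}G}(F)=\lambda_{A\rtimes_{(1,\omega)}G}(F)$), and this is exactly the content of the convolution computation the paper carries out and you skip. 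Second, you verify agreement of the two weights only on the diagonal elements $\lambda_{A\rtimes_{(1,\omega)}G}(F)^*\lambda_{A\rtimes_{(1,\omega)}G}(F)$, whereas Takesaki's proposition is applied to the span of products $\lambda_G^{\gamma\omega}(f)^*\lambda_G^{\gamma\omega}(g)$ as in the paper; this is fine, but you should say that polarization and linearity of the weights on their domains bridge the gap.
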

\begin{proof}
Denote $G(\omega):=A\rtimes_{(1,\omega)} G$. The first equality follows from our discussion above. 

Fixing $\gamma \in A\hat{\ }$, we let $p_{G,\gamma}$ and $U_{G,\gamma}$ be as in equation (\ref{eqn:projections_onto_the_twist}) and (\ref{eqn:unitary_of_the_extension}), respectively. Thus, setting     \[
        \Omega_{G,\gamma}(x) := U_{G,\gamma} xU^*_{G,\gamma},
    \]
we have a normal unital $*$-homomorphism $\Omega_{G,\gamma}: L(G(\omega)) \to L_{\gamma\omega}(G)$, which satisfies $\Omega_{G,\gamma}(\lambda_{G(\omega)}(a,s)) = (\gamma|a)\lambda^{\gamma\omega}_G(s)$ for all $(a,s) \in G(\omega)$. It follows that $\Omega_{G,\gamma}$ restricted to $L(G(\omega))p_{G,\gamma}$ is an isomorphism.

Finally, we prove that for a fixed $\gamma \in A\hat{\ }$, the normal semifinite weight $\psi:= \varphi_{G(\omega)}\circ \Omega_{G,\gamma}^{-1}$ on $L_{\gamma\omega}(G)$ agrees with the twisted Plancherel weight $\varphi^{\gamma\omega}_G$, where $\varphi_{G(\omega)}$ is the Plancherel weight on $L(G(\omega))$. Since $\Omega_{G,\gamma}^{-1} \circ \sigma_t^{\varphi^{\gamma\omega}_G} =\sigma_t^{\varphi_{G(\omega)}} \circ \Omega_{G,\gamma}^{-1}$ for all $t \in \R$, it follows that $\psi$ commutes with $\varphi_G^{\gamma\omega}$, that is $\psi \circ \sigma_t^{\varphi^{\gamma\omega}_G} = \psi$ for all $t \in \R$. By \cite[Proposition VIII.3.15]{Tak03}, we obtain our claim if we show that they agree on the $\sigma$-weakly dense $*$-subalgebra $B = \text{span}\{ \lambda^{\gamma\omega}_G(f)^*\lambda_G^{\gamma\omega}(g) : f,g \in B_{b,c}(G)\}$ in $\sqrt{\dom}(\varphi^{\gamma\omega}_G)$, which is invariant over the automorphism group $\sigma^{\varphi^{\gamma\omega}_G}$. Now for any $f \in B_{b,c}(G)$, the normality of $\Omega_{G,\gamma}^{-1}$ gives us
    \begin{align*}
        \Omega_{G,\gamma}^{-1}(\lambda_G^{\gamma\omega}(f)) &= \int_G f(s) \Omega_{G,\gamma}^{-1}(\lambda^{\gamma\omega}_G(s))d\mu_G(s)= \int_G\int_A \overline{(\gamma \mid a)}f(s) \lambda_{G(\omega)}(a,s)d\mu_A(a)d\mu_G(s) \\
        &= \int_G\int_A (U^*_{G,\gamma} f)(a,s) \lambda_{G(\omega)}(a,s)d\mu_A(a)d\mu_G(s)= \lambda_{G(\omega)}(U^*_{G,\gamma} f),
    \end{align*}
where the third equality follows from $(U^*_{G,\gamma} f)(a,s)= \overline{(\gamma | a)}f(s)$ for $f \in L^2(G)$ and $(a,s) \in G(\omega)$. Additionally, for any $\gamma' \in A\hat{\ }$, and $g \in p_{G,\gamma'}L^2(G(\omega))$, we have
    \begin{align*}
        [\lambda_{G(\omega)}(U^*_{G,\gamma} f)g](a,s) &= \int_G\int_A (U^*_{G,\gamma} f)(b,t) [\lambda_{G(\omega)}(b,t)g](a,s) d\mu_A(b)d\mu_G(t)\\
        &=\int_G\int_A \overline{(\gamma\mid b)}f(t)g(a-b-\omega(t,t^{-1})+\omega(t^{-1},s),t^{-1}s) d\mu_A(b)d\mu_G(t)\\
        &=\int_G\int_A \overline{(\gamma\mid b)}f(t) \overline{(\gamma'\mid a-b-\omega(t,t^{-1})+\omega(t^{-1},s))}g(0,t^{-1}s) d\mu_A(b)d\mu_G(t)\\
        &=\overline{(\gamma' \mid a)} \int_G\int_Af(t) (\gamma'\mid \omega(t,t^{-1}s))g(0,t^{-1}s)d\mu_G(t)\int_A \overline{(\gamma\mid b)} (\gamma'\mid b)d\mu_A(b)\\
        &= \overline{(\gamma' \mid a)}\int_G f(t)[\lambda^{\gamma' \omega}_G(t) (U_{G,\gamma'}g)](s) d\mu_G(t) \int_A \overline{(\gamma\mid b)} (\gamma'\mid b)d\mu_A(b),
    \end{align*}
which will be zero if $\gamma \neq \gamma'$ or finite if $\gamma = \gamma'$ and so $U^*_{G,\gamma} f \in L^2(G(\omega))$ is a left convolver for any $f \in B_{b,c}(G)$. Thus, for any $f,g \in B_{b,c}(G)$, we obtain using that fact that $\lambda_{G(\omega)}(U_{G,\gamma}^* g) p_{G,\gamma'} = 0$ in the second equality,
    \begin{align*}
        \psi(\lambda_G^{\gamma\omega}(f)^*\lambda_G^{\gamma\omega}(g)) &=\varphi_{G(\omega)}\left(\lambda_{G(\omega)}(U_{G,\gamma}^*f)^* \lambda_{G(\omega)}(U_{G,\gamma}^* g)p_{\gamma}\right)\\
        &= \sum_{\gamma' \in A\hat{\ }} \varphi_{G(\omega)}\left(\lambda_{G(\omega)}(U_{G,\gamma}^*f)^* \lambda_{G(\omega)}(U_{G,\gamma}^* g)p_{G,\gamma'}\right) \\
        &= \langle U^*_{G,\gamma} g, U^*_{G,\gamma} f\rangle_{L^2(\mu_A \times \mu_G)}\\
        &= \langle g, f\rangle_{L^2(\mu_G)}= \varphi_G^{\gamma\omega}\left(\lambda_G^{\gamma\omega}(f)^*\lambda_G^{\gamma\omega}(g)\right),
    \end{align*}         
and the equality on $B$ follows from linearity of $\psi$.
\end{proof}

Lastly, we show that we can identify $L_\omega(H)$ as a subalgebra of $L_\omega(G)$ such that $\lambda_H^\omega(h) \mapsto \lambda_G^\omega(h)$ for all $h \in H$ using Proposition~\ref{prop:decomp_of_central_extension}, which is most likely known to experts.

\begin{prop} \label{prop:identification_of_twisted_subgroup_vNa}
Let $H$ be a closed subgroup of a locally compact second countable group $G$ and $\omega: G \times G \to \mathbb{T}$ be a 2-cocycle. Then 
    \[
        L_\omega(H) \cong \{\lambda_G^\omega(s) : s \in H\}''\subset L_\omega(G).
    \]  
\end{prop}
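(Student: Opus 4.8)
The plan is to deduce the statement from its \emph{untwisted} case by passing to central extensions, as furnished by Proposition~\ref{prop:decomp_of_central_extension}. Since cohomologous 2-cocycles generate the same twisted group von Neumann algebra and the same generated subalgebra, I may assume $\omega$ is normalized. Taking $A=\mathbb{T}$, I form the central extensions $G(\omega):=\mathbb{T}\rtimes_{(1,\omega)}G$ and $H(\omega):=\mathbb{T}\rtimes_{(1,\omega)}H$, the latter built from the restriction $\omega|_{H\times H}$. Writing $q\colon G(\omega)\to G(\omega)/\mathbb{T}\cong G$ for the quotient by the compact central subgroup $\mathbb{T}$, the preimage $q^{-1}(H)=\mathbb{T}\times H$ is a closed subgroup of $G(\omega)$ whose group operations are exactly those of $H(\omega)$; this realizes $H(\omega)$ as a closed subgroup of $G(\omega)$, with its subspace topology matching its Weil topology. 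As a closed subgroup of a second countable group, $H(\omega)$ is again second countable, so Proposition~\ref{prop:decomp_of_central_extension} applies to both extensions.

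Let $\iota\in\mathbb{T}\hat{\ }$ be the identity character, so that $(\iota\mid b)=b$ and $\iota\omega=\omega$. Applying Proposition~\ref{prop:decomp_of_central_extension} with $\gamma=\iota$ to each extension yields normal unital $*$-homomorphisms $\Omega_{G,\iota}\colon L(G(\omega))\to L_\omega(G)$ and $\Omega_{H,\iota}\colon L(H(\omega))\to L_\omega(H)$, faithful on the corners cut by the central projections $p_{G,\iota}$ and $p_{H,\iota}$, sending $\lambda_{G(\omega)}(b,s)\mapsto b\,\lambda_G^\omega(s)$, respectively $\lambda_{H(\omega)}(b,h)\mapsto b\,\lambda_H^\omega(h)$. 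The inclusion of $H(\omega)$ as a closed subgroup of $G(\omega)$ supplies the bridge: the untwisted case of the present proposition (classical for ordinary group von Neumann algebras) gives a normal $*$-isomorphism
\[
    \Theta\colon L(H(\omega))\xrightarrow{\ \cong\ }\{\lambda_{G(\omega)}(x):x\in H(\omega)\}''\subseteq L(G(\omega)),\qquad \lambda_{H(\omega)}(x)\mapsto\lambda_{G(\omega)}(x).
\]
The linchpin is that $\Theta$ carries $p_{H,\iota}$ onto $p_{G,\iota}$: by the computation in Proposition~\ref{prop:decomp_of_central_extension} one has $p_{G,\iota}=\int_{\mathbb{T}}\overline{b}\,\lambda_{G(\omega)}(b,e)\,d\mu_{\mathbb{T}}(b)$ and $p_{H,\iota}=\int_{\mathbb{T}}\overline{b}\,\lambda_{H(\omega)}(b,e)\,d\mu_{\mathbb{T}}(b)$ over the common central copy of $\mathbb{T}$, and $\Theta$ is normal with $\lambda_{H(\omega)}(b,e)\mapsto\lambda_{G(\omega)}(b,e)$.

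Composing, I obtain a normal unital $*$-homomorphism $\Psi:=\Omega_{G,\iota}\circ\Theta\colon L(H(\omega))\to L_\omega(G)$ with $\Psi(\lambda_{H(\omega)}(b,h))=b\,\lambda^\omega_G(h)$, so that $\Psi$ has range $\{\lambda^\omega_G(h):h\in H\}''$ and $\Psi(p_{H,\iota})=\int_{\mathbb{T}}\overline{b}\,b\,d\mu_{\mathbb{T}}(b)\cdot 1=1$. As $p_{H,\iota}$ is central, $\Psi$ annihilates $L(H(\omega))(1-p_{H,\iota})=\ker\Omega_{H,\iota}$, hence factors as $\Psi=\Xi\circ\Omega_{H,\iota}$ for a normal unital $*$-homomorphism $\Xi\colon L_\omega(H)\to\{\lambda^\omega_G(h):h\in H\}''$ satisfying $\Xi(\lambda^\omega_H(h))=\lambda^\omega_G(h)$. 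Surjectivity of $\Xi$ is immediate from the range of $\Psi$, and injectivity follows because $\Theta(p_{H,\iota})=p_{G,\iota}$ forces $\Theta$ to map the corner $L(H(\omega))p_{H,\iota}\cong L_\omega(H)$ into $L(G(\omega))p_{G,\iota}$, on which $\Omega_{G,\iota}$ is faithful. I expect the main obstacle to be the untwisted input $\Theta$ together with the clean realization of $H(\omega)$ as a closed subgroup of $G(\omega)$, since these are precisely what let Proposition~\ref{prop:decomp_of_central_extension} and the untwisted subgroup theorem be applied in tandem to the two extensions; once they are in place, the projection-matching $\Theta(p_{H,\iota})=p_{G,\iota}$ makes the corners line up automatically.
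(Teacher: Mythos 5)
Your proposal is correct and takes essentially the same route as the paper: the paper likewise realizes $\mathbb{T}\rtimes_{(1,\omega)}H$ as a closed subgroup of $\mathbb{T}\rtimes_{(1,\omega)}G$, invokes the untwisted subgroup embedding (citing \cite[Proposition 2.8]{HR19}) for what you call $\Theta$, observes by normality that it matches the corners cut by $p_{H,1}$ and $p_{G,1}$, and conjugates by the isomorphisms $\Omega_{H,1}$ and $\Omega_{G,1}$ of Proposition~\ref{prop:decomp_of_central_extension}. Your verification of $\Theta(p_{H,\iota})=p_{G,\iota}$, the factoring through $\ker\Omega_{H,\iota}$, and the injectivity/surjectivity of $\Xi$ merely spell out details the paper leaves implicit.
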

\begin{proof}
Assume that $\omega$ is fully normalized (see discussion before \cite[Lemma 2]{Kle74} and \cite[Proposition 2.4]{Sut80}). We denote $G(\omega) := \mathbb{T} \rtimes_{(1,\omega)}G$ and $H(\omega) := \mathbb{T} \rtimes_{(1,\omega)}H$. By \cite[Proposition 2.8]{HR19}, we have a $*$-isomorphism $\Psi:L(H(\omega)) \to L(G(\omega))$ by $\Psi(\lambda_{H(\omega)}(a,s)) = \lambda_{G(\omega)}(a,s)$. By the normality of $\Psi$, it follows that $\Psi(\lambda_{H(\omega)}(1,s)p_{H,1}) = \lambda_{G(\omega)}(1,s)p_{G,1}$, where $p_{H,1}$ and $p_{G,1}$ are defined by equation (\ref{eqn:projections_onto_the_twist}) associated to $H(\omega)$ and $G(\omega)$, respectively. Thus, using the $*$-isomorphisms $\Omega_{H,1}:L(H(\omega))p_{H,1} \to  L_\omega(H)$ and $\Omega_{G,1}: L(G(\omega))p_{G,1}\to L_\omega(G) $ in Proposition \ref{prop:decomp_of_central_extension}, we have $\Omega_{G,1}(\Psi( \Omega_{H,1}^{-1}(\lambda^\omega_H(s)))) = \lambda^\omega_G(s)$.
\end{proof}

\section{Square integrable projective representations}\label{sec:proj_reps}
In this section, we show the relation between the square integrable $\omega$-projective representations of a second countable almost unimodular groups $G$ and the square integrable $\omega$-projective representations of $\ker\Delta_G$, where $\omega: G\times G \to \mathbb{T}$ is a 2-cocycle. When $\omega$ is cohomologous to the trivial 2-cocycle, we obtain the results in \cite[Section 4]{GGN25}.

Given an $\omega$-projective representation $(\pi,\H)$ of $G$, the map $\pi_\omega:\mathbb{T} \rtimes_{(1,\omega)} G \to U(\H)$ defined by
    \begin{equation}\label{eqn:proj_rep_extension}
        \pi_\omega(a,s)\xi := a \pi(s)\xi, \quad \xi \in \H, (a,s) \in \mathbb{T}\rtimes_{(1,\omega)}G,
    \end{equation}
is a representation of $\mathbb{T} \rtimes_{(1,\omega)} G$ (see \cite[22.20]{HR79} and \cite[Theorem 1]{Kle74}). If a representation $(\rho, \K)$ of $\mathbb{T} \rtimes_{(1,\omega)} G$ satisfies that the restriction of $\rho$ to the closed normal subgroup $\mathbb{T} \,(= \mathbb{T} \times e)$ is one dimensional, then we have an $\omega$-projective representation $(\pi, \K)$ of $G$ defined by 
    \[
        \pi(s)\xi := \rho(1,s)\xi \qquad \xi \in \H_\omega,s \in G 
    \] 
and the representation $(\pi_\omega,\K)$ of $\mathbb{T}\rtimes_{(1,\omega)} G$ defined by equation (\ref{eqn:proj_rep_extension}) and $(\rho,\K)$ agree (see \cite[Theorem 2.1]{Mac58}). 

The theory of induced projective representation of closed subgroups is deeply connected to the induced representation of closed subgroups of the central extension. Given a closed subgroup $H$ of $G$, we have that $\omega$ is a 2-cocycle on $H$ and we identify $\mathbb{T}\rtimes_{(1,\omega)} H$ with the closed subgroup generated by $1 \times H$ and $\mathbb{T}\times e$ in $\mathbb{T}\rtimes_{(1,\omega)}G$. The latter is closed since $\mathbb{T}\rtimes_{(1,\omega)} G/\mathbb{T} \cong G$. For an $\omega$-projective representation $(\pi_1,\H_1)$ of $H$, the induced representation of $\mathbb{T} \rtimes_{(1,\omega)} G$ associated to the representation of $\mathbb{T} \rtimes_{(1,\omega)}H$ given by equation (\ref{eqn:proj_rep_extension}) of $(\pi_1,\H_1)$ satisfies that its restriction to the subgroup $\mathbb{T}$ is one dimensional (see \cite[Theorem 12.1]{Mac52} with $G_1 = \mathbb{T} \rtimes_{(1,\omega)} H$ and $G_2 = \mathbb{T}$). Thus by \cite[Theorem 2.1]{Mac58}, there exists an $\omega$-projective representation $(\pi, \H)$ of $G$, which can be shown to be unitary equivalence to the induced $\omega$-projective representation $(\pi_1,\H_1)$ of $H$ (see \cite[Theorem 4.1]{Mac58} and its preceding discussion). The following is the projective representation analogue of \cite[Theorem 4.1]{GGN25}.

\begin{thm}\label{thm:proj_reps_induced_from_kernel}
Let $(\pi, \H)$ be an $\omega$-projective representation of a second countable almost unimodular group $G$ with a 2-cocycle $\omega: G \times G \to \mathbb{T}$. The following are equivalent:
\begin{enumerate}[label=(\roman*)]
    \item $(\pi,\H)\cong \Ind^G_{\ker{\Delta_G}}(\pi_1,\H_1)$ for some $\omega$-projective representation $(\pi_1,\H_1)$ of $\ker{\Delta_G}$.

    \item There exists a representation $U: \Delta_G(G)\hat{\ } \to U(\H)$ satisfying $U_\gamma \pi(s) U_\gamma^* = (\gamma \mid \Delta_G(s)) \pi(s)$, where $(\cdot\mid \cdot)\colon \Delta_G(G)\hat{\ }\times \Delta_G(G)\to \mathbb{T}$ is the dual pairing.
\end{enumerate}
\end{thm}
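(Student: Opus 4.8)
The plan is to transfer the statement to the central extension $G(\omega) := \mathbb{T}\rtimes_{(1,\omega)} G$ and invoke the non-projective version \cite[Theorem 4.1]{GGN25} there. First I would lift $(\pi,\H)$ to the representation $(\pi_\omega,\H)$ of $G(\omega)$ via equation (\ref{eqn:proj_rep_extension}). By Proposition~\ref{prop:extension_of_almost_unimodularity}, $G(\omega)$ is again almost unimodular, and by equation (\ref{eqp:modular_function_of_central_extension}) its modular function satisfies $\Delta_{G(\omega)}(a,s)=\Delta_G(s)$. Consequently $\Delta_{G(\omega)}(G(\omega))=\Delta_G(G)$, so the dual groups $\Delta_{G(\omega)}(G(\omega))\hat{\ }$ and $\Delta_G(G)\hat{\ }$ coincide, and $\ker\Delta_{G(\omega)}=\mathbb{T}\rtimes_{(1,\omega)}\ker\Delta_G$. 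Thus $G(\omega)$ satisfies the hypotheses of \cite[Theorem 4.1]{GGN25}, which yields the equivalence of: (i$'$) $\pi_\omega\cong \Ind^{G(\omega)}_{\ker\Delta_{G(\omega)}}(\rho_1)$ for some representation $\rho_1$ of $\ker\Delta_{G(\omega)}$; and (ii$'$) there is a representation $U:\Delta_G(G)\hat{\ }\to U(\H)$ with $U_\gamma \pi_\omega(a,s) U_\gamma^* = (\gamma\mid\Delta_G(s))\pi_\omega(a,s)$.

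The equivalence (ii)$\Leftrightarrow$(ii$'$) is immediate: since $\pi_\omega(a,s)=a\pi(s)$ and the scalar $a$ commutes with $U_\gamma$, the intertwining relation in (ii$'$) is satisfied by exactly the same $U$ that satisfies (ii) after cancelling $a$. The substance is the equivalence (i)$\Leftrightarrow$(i$'$), which rests on the compatibility of projective induction with the lifting $(\pi_1,\H_1)\mapsto((\pi_1)_\omega,\H_1)$ recorded in the discussion preceding the theorem: for any $\omega$-projective representation $(\pi_1,\H_1)$ of $\ker\Delta_G$ one has $(\Ind^G_{\ker\Delta_G}(\pi_1))_\omega\cong \Ind^{G(\omega)}_{\ker\Delta_{G(\omega)}}((\pi_1)_\omega)$. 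Given (i), applying the lift and this compatibility gives (i$'$) with $\rho_1=(\pi_1)_\omega$. For the converse, I would observe that because $\mathbb{T}$ is central in $G(\omega)$ and $\pi_\omega$ restricts to the identity character $a\mapsto a\cdot\mathrm{id}_\H$ on $\mathbb{T}$, the inducing representation $\rho_1$ in (i$'$) must restrict to that same character on $\mathbb{T}$, as a central subgroup acts on an induced representation through its action in the inducing representation. Hence $\rho_1$ is the lift $(\pi_1)_\omega$ of the $\omega$-projective representation $\pi_1(s):=\rho_1(1,s)$ of $\ker\Delta_G$, and the compatibility together with injectivity up to unitary equivalence of the lifting correspondence \cite[Theorem 2.1]{Mac58} yields $\pi\cong\Ind^G_{\ker\Delta_G}(\pi_1)$.

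The main obstacle will be this last central-restriction argument: verifying cleanly that the representation $\rho_1$ produced by the non-projective theorem necessarily carries the correct character of $\mathbb{T}$, so that it descends to a genuine $\omega$-projective representation of $\ker\Delta_G$ rather than merely a projective one for some cohomologous cocycle. Everything else is bookkeeping built on Mackey's lifting dictionary and Proposition~\ref{prop:extension_of_almost_unimodularity}.
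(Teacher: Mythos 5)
Your proposal is correct, and its overall route is the paper's route: lift $(\pi,\H)$ to $(\pi_\omega,\H)$ on $G(\omega)=\mathbb{T}\rtimes_{(1,\omega)}G$, note $\Delta_{G(\omega)}(G(\omega))=\Delta_G(G)$ so that \cite[Theorem 4.1]{GGN25} applies, and descend via Mackey's dictionary \cite[Theorem 2.1]{Mac58}; the (i)$\Rightarrow$(ii) direction and the observation that (ii) and (ii$'$) are interchangeable are exactly as in the paper. The one genuine difference is how you handle the step you yourself flag as the main obstacle in (ii)$\Rightarrow$(i): showing that the inducing representation $\rho_1$ of $\ker\Delta_{G(\omega)}$ produced by \cite[Theorem 4.1]{GGN25} restricts on $\mathbb{T}$ to the tautological character $a\mapsto a\,\mathrm{id}$. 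The paper settles this by opening up the proof of \cite[Theorem 4.1]{GGN25}: there the inducing representation is realized concretely as $\pi_\omega|_{G_1(\omega)}$ acting on a subspace $\H_1$ in the decomposition $\H=\bigoplus_{\delta\in\Delta_G(G)}\pi_\omega(\sigma(\delta))\H_1$, so its restriction to $\mathbb{T}$ is visibly scalar. You instead keep \cite[Theorem 4.1]{GGN25} as a black box and argue that a central subgroup contained in the inducing subgroup acts on an induced representation pointwise through the inducing representation, and that unitary equivalence with $\pi_\omega$ (whose restriction to $\mathbb{T}$ is $a\mapsto a\,\mathrm{id}$) then forces $\rho_1|_{\mathbb{T}}$ to be that same character, since scalars are rigid under conjugation. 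This argument is sound, and it is particularly painless here because $\ker\Delta_{G(\omega)}$ is open (as $G(\omega)$ is almost unimodular), so the quotient is discrete and the pointwise-action claim for induction needs no quasi-invariant-measure bookkeeping. What each approach buys: yours is independent of the internal construction in \cite{GGN25} and so would survive any other proof of that theorem, at the cost of importing a (standard but nontrivial) fact about central characters of induced representations; the paper's avoids any such fact but is tied to the specific form of the inducing representation in the cited proof.
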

\begin{proof}
Assume that $\omega$ is fully normalized (see discussion before \cite[Lemma 2]{Kle74} and \cite[Proposition 2.4]{Sut80}). Denote $G_1 := \ker\Delta_G$, $G(\omega):= \mathbb{T} \rtimes_{(1,\omega)} G$ and $G_1(\omega):= \mathbb{T}\rtimes_{(1,\omega)} G_1$. Let $(\pi_\omega,\H)$ be the representation of the almost unimodular group $G(\omega)$ defined by equation (\ref{eqn:proj_rep_extension}) associated to $(\pi,\H)$. \\

\noindent \textbf{(i)$\Rightarrow$(ii):} Let $(\pi_{1,\omega}, \H_1)$ be the representation of $G_1(\omega)$ defined by equation (\ref{eqn:proj_rep_extension}) associated to $(\pi_1,\H_1)$. By the discussion preceding this theorem, it follows that $(\pi_\omega,\H)\cong \Ind^{G(\omega)}_{G_1(\omega)}(\pi_{1,\omega},\H_1)$. Since $G(\omega)$ is almost unimodular with $\Delta_{G(\omega)}(G(\omega))= \Delta_G(G)$, we apply \cite[Theorem 4.1]{GGN25} to obtain the existence of a representation $U: \Delta_G(G)\hat{\ } \to U(\H)$ satisfying $U_\gamma \pi_\omega(a,s) U_\gamma^* = (\gamma \mid \Delta_G(s)) \pi_\omega(a,s)$. The claim follows from $\pi(s)=\pi_\omega(1,s) $.\\

\noindent \textbf{(ii)$\Rightarrow$(i):} Let $U$ be a representation of $\Delta_G(G)\hat{\ }$. Then $U$ also satisfies $U_\gamma \pi_\omega(a,s) U_\gamma^* = (\gamma \mid \Delta_G(s)) \pi_\omega(a,s)$. Again, since $G(\omega)$ is almost unimodular, we apply \cite[Theorem 4.1]{GGN25} to obtain the existence of a representation $(\rho,\H_1)$ of $G_1(\omega)$ such that $\text{Ind}_{G_1(\omega)}^{G(\omega)}(\rho, \H_1)\cong (\pi_\omega, \H)$. Since $\Delta_{G(\omega)}(G(\omega)) = \Delta_G(G)$, we obtain $\H= \bigoplus_{\delta \in \Delta_G(G)} \pi_{\omega}(\sigma(\delta))\H_1$, where $\sigma: \Delta_G(G) \to G(\omega)$ is a normalized section of $\Delta_G(G)$; that is, $\sigma(1) = (1,e)$ and $\Delta_{G(\omega)}\circ \sigma (\delta)= \delta $ for all $\delta \in \Delta_{G(\omega)}(G(\omega))$ (see proof of \cite[Theorem 4.1]{GGN25}). In particular, $(\pi_{\omega}|_{G_1(\omega)},\H_1)$ is equal to $(\rho,\H_1)$ as representations of $G_1(\omega)$. This implies that $\rho$ restricted to $\mathbb{T}$ is one-dimensional. By \cite[Theorem 2.1]{Mac58}, there is an $\omega$-projective representation $(\pi_1,\H_1)$ of $G_1$ such that the representation $(\pi_{1,\omega},\H_1)$ of $G_1(\omega)$ defined by equation (\ref{eqn:proj_rep_extension}) is unitarily equivalent to $(\rho,\H_1)$. Thus $\Ind^G_{G_1}(\pi_1,\H_1) \cong (\pi,\H)$ as claimed.
\end{proof}

\subsection{Irreducible square integrable projective representations} Let $(\pi,\H)$ be an irreducible $\omega$-projective representation of a locally compact second countable group $G$ with a 2-cocycle $\omega$ and let $(\pi_\omega,\H)$ denote the representation of $\mathbb{T}\rtimes_{(1,\omega)}G$ defined in (\ref{eqn:proj_rep_extension}), which is irreducible. Recall that $(\pi,\H)$ is called \textit{square integrable} if there exists  $\xi,\eta \in \H$ such that the function $c_{\xi,\eta}(s) := \langle \pi(s) \xi,\eta\rangle$, called a coefficient of $\pi$, is a non-zero element in $L^2(G)$. By \cite[Theorem 2]{Ani06}, $(\pi,\H)$ is square integrable if and only if $(\pi_\omega,\H)$ is square integrable. Additionally, $(\pi,\H)$ is subequivalent to the left regular $\omega$-projective representation of $G$ and the formal degree operator associated to $(\pi, \H)$ is also associated to $(\pi_\omega,\H)$. The formal degree operator $D$ associated to a representation (or $\omega$-projective representation) $(\pi ,\H)$ of $G$ is a unique non-zero, self-adjoint, positive operator $D$ on $\H$ satisfying
    \[
        \pi(s)D\pi^*(s) =\Delta_G(s)^{-1}D \qquad s \in G.
    \]
The above property of $D$ is referred to as \emph{semi-invariance} with weight $\Delta_G^{-1}$ (see \cite[Section 1]{DM76}). Furthermore, $\xi \in \dom(D^{-1/2})$ if and only if $c_{\xi,\eta}\in L^2(G)$ for all $\eta\in \H$, and the following orthogonality relation holds
    \begin{align}\label{eqn:formal_degree_operator_equation}
        \langle c_{\xi_1,\eta_1}, c_{\xi_2,\eta_2}\rangle_{L^2(\mu_G)} = \int_{G} \langle\pi(s) \xi_1,\eta_1\rangle \overline{\langle\pi(s) \xi_2,\eta_2\rangle} d\mu(s) = \langle D^{-\frac12}\xi_1,D^{-\frac12} \xi_2\rangle \langle \eta_2, \eta_1\rangle,
    \end{align}
for $\xi_1,\xi_2 \in \dom(D^{-1/2})$ and $ \eta_1,\eta_2 \in \H$ (see \cite[Theorem 3]{DM76}).
If $G$ is unimodular, the formal degree operator is just the formal degree associated to $\omega$-projective representation $(\pi,\H)$ of $G$ or representation $(\pi_\omega,\H)$ of $\mathbb{T} \rtimes_{(1,\omega)} G$ (see \cite[Theorem 2]{Ani06}). The following is the projective analogue of \cite[Theorem 4.2]{GGN25}. 

\begin{thm}\label{thm:sq_int_irred_proj_reps}
Let $(\pi,\H)$ be an irreducible square integrable $\omega$-projective representation of a second countable almost unimodular group $G$ with a 2-cocycle $\omega: G \times G \to \mathbb{T}$. Then there exists an irreducible square integrable $\omega$-representation $(\pi_1,\H_1)$ of $\ker\Delta_G$ such that $(\pi,\H) \cong \emph{Ind}_{\ker\Delta_G}^G(\pi_1,\H_1)$, and the formal degree operator $D$ for $(\pi,\H)$ is diagonalizable with 
    \[
        D=\sum_{\delta \in \Delta_G(G)}(d_{\pi_1}\delta)1_{d_{\pi_1}\delta}(D),
    \]
where $d_{\pi_1}$ is the formal degree of $(\pi_1,\H_1)$.
\end{thm}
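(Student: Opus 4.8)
The plan is to reduce the statement to the already-established unimodular/almost-unimodular machinery by passing to the central extension $G(\omega):=\mathbb{T}\rtimes_{(1,\omega)}G$ and invoking the projective analogue of \cite[Theorem 4.1]{GGN25} that was just proved as Theorem~\ref{thm:proj_reps_induced_from_kernel}, together with \cite[Theorem 4.2]{GGN25} applied to the \emph{representation} $(\pi_\omega,\H)$ of $G(\omega)$. First I would record that, by Theorem~\ref{thm:proj_reps_induced_from_kernel} and the irreducibility discussion preceding this subsection, the hypotheses give a representation $U\colon\Delta_G(G)\hat{\ }\to U(\H)$ intertwining $\pi$ up to the character $(\gamma\mid\Delta_G(s))$, and correspondingly for $\pi_\omega$. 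Since $G(\omega)$ is almost unimodular with $\Delta_{G(\omega)}(G(\omega))=\Delta_G(G)$ (equation (\ref{eqp:modular_function_of_central_extension})), and since $(\pi_\omega,\H)$ is irreducible and square integrable exactly when $(\pi,\H)$ is (by \cite[Theorem 2]{Ani06}), I can apply \cite[Theorem 4.2]{GGN25} directly to $(\pi_\omega,\H)$. This yields a representation $(\rho,\H_1)$ of $G_1(\omega):=\mathbb{T}\rtimes_{(1,\omega)}\ker\Delta_G$ with $(\pi_\omega,\H)\cong\operatorname{Ind}_{G_1(\omega)}^{G(\omega)}(\rho,\H_1)$ and, crucially, the diagonalization of the formal degree operator.

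Next I would transfer the existence statement back to the projective world. The induced representation $(\rho,\H_1)$ restricts to a one-dimensional representation on the central $\mathbb{T}$ (as in the proof of Theorem~\ref{thm:proj_reps_induced_from_kernel}), so by \cite[Theorem 2.1]{Mac58} it comes from an $\omega$-projective representation $(\pi_1,\H_1)$ of $\ker\Delta_G$ via equation (\ref{eqn:proj_rep_extension}); irreducibility and square integrability of $(\pi_1,\H_1)$ follow from those of $(\rho,\H_1)$ by \cite[Theorem 2]{Ani06}. Then $(\pi,\H)\cong\operatorname{Ind}_{\ker\Delta_G}^G(\pi_1,\H_1)$ is exactly the first assertion, which also follows from the (i)$\Rightarrow$(ii) and (ii)$\Rightarrow$(i) equivalence already set up in Theorem~\ref{thm:proj_reps_induced_from_kernel}.

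For the formula for $D$, the key point is that the formal degree operator for $(\pi,\H)$ as an $\omega$-projective representation of $G$ coincides with the formal degree operator for $(\pi_\omega,\H)$ as a representation of $G(\omega)$: both are the unique positive self-adjoint operator satisfying semi-invariance with weight $\Delta_G^{-1}=\Delta_{G(\omega)}^{-1}$, and this identification is exactly what \cite[Theorem 2]{Ani06} provides (and is noted in the paragraph before the statement). Likewise, the formal degree $d_{\pi_1}$ of the unimodular $\omega$-projective representation $(\pi_1,\H_1)$ of $\ker\Delta_G$ equals the formal degree of $(\rho,\H_1)$ as a representation of the unimodular group $G_1(\omega)$. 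Therefore the diagonalization $D=\sum_{\delta\in\Delta_G(G)}(d_{\pi_1}\delta)\,1_{\{d_{\pi_1}\delta\}}(D)$ is literally the conclusion of \cite[Theorem 4.2]{GGN25} applied to $(\pi_\omega,\H)$, read through these two formal-degree identifications.

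The step I expect to be the only real friction is bookkeeping rather than mathematics: carefully matching the formal degree operator of the projective representation with that of its lift, and matching $d_{\pi_1}$ with the formal degree of $\rho$, so that the point spectrum indexed by $\Delta_G(G)=\Delta_{G(\omega)}(G(\omega))$ and the scaling by $d_{\pi_1}$ transport correctly. Since \cite[Theorem 2]{Ani06} asserts that the formal degree operator of $(\pi,\H)$ \emph{is} the formal degree operator of $(\pi_\omega,\H)$, and since $G(\omega)$ is genuinely almost unimodular, no new analytic input beyond \cite[Theorem 4.2]{GGN25} is needed; the proof is essentially a translation once the central-extension dictionary is in place.
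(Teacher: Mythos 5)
Your proposal is correct and follows essentially the same route as the paper: pass to $(\pi_\omega,\H)$ on $\mathbb{T}\rtimes_{(1,\omega)}G$, apply \cite[Theorem 4.2]{GGN25} there, descend $\rho$ to an $\omega$-projective representation of $\ker\Delta_G$ via \cite[Theorem 2.1]{Mac58} using that $\rho$ is scalar on the central $\mathbb{T}$, and transport the formal degree data through \cite[Theorem 2]{Ani06}. The only quibble is your opening remark invoking Theorem~\ref{thm:proj_reps_induced_from_kernel} to produce the representation $U$ of $\Delta_G(G)\hat{\ }$: that would be circular at that stage (you do not yet know $\pi$ is induced), but it is harmless since your argument never actually uses $U$.
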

\begin{proof}
Assume that $\omega$ is fully normalized (see discussion before \cite[Lemma 2]{Kle74} and \cite[Proposition 2.4]{Sut80}). Denote $G_1 := \ker\Delta_G$, $G(\omega):= \mathbb{T} \rtimes_{(1,\omega)} G$ and $G_1(\omega):= \mathbb{T}\rtimes_{(1,\omega)} G_1$. Let $(\pi_\omega,\H)$ be the representation of $G(\omega)$ given by (\ref{eqn:proj_rep_extension}). By \cite[Theorem 4.2]{GGN25}, there exists an irreducible square integrable representation $(\rho, \H_1)$ of $G_1(\omega)$ such that $(\pi_\omega,\H) \cong \text{Ind}_{G_1(\omega)}^{G(\omega)}(\rho,\H_1)$. Since $\Delta_{G(\omega)}(G(\omega)) = \Delta_G(G)$, we fixed normalized section $\sigma: \Delta_G(G) \to G$ of $\Delta_G(G)$ and define a normalized section $\sigma^\omega: \Delta_G(G) \to G(\omega)$ of $\Delta_G(G)$ by $\sigma^\omega(\delta):=(1, \sigma(\delta))$ to obtain $\H= \bigoplus_{\delta \in \Delta_G(G)} \pi_{\omega}(\sigma^\omega(\delta))\H_1=\bigoplus_{\delta \in \Delta_G(G)} \pi(\sigma(\delta))\H_1$ (see \cite[Theorem 4.2]{GGN25}. In particular, $(\pi_{\omega}|_{G_1(\omega)},\H_1)$ is equal to $(\rho,\H_1)$ as representations of $G_1(\omega)$ and the former is one-dimensional when restricted to $\mathbb{T}$. By \cite[Theorem 2.1]{Mac58}, we have an $\omega$-projective representation $(\pi_1,\H_1)$ of $G_1$ such that its representation of $G_1(\omega)$, using equation (\ref{eqn:proj_rep_extension}), is unitary equivalent to $(\rho,\H)$. By \cite[Theorem 2]{Ani06}, the formal degree operator associated to $(\pi_\omega,\H)$ is the same formal degree operator associated to $(\pi,\H)$. Similarly for the formal degree of $(\rho, \H_1)$ and $(\pi_1,\H_1)$. Thus by \cite[Theorem 4.2]{GGN25}, we obtain that $D$ is diagonalizable with the above formula.  
\end{proof}

\begin{rem}
Observe that the decomposition of $\H$ in the proof of the previous theorem implies that for fixed $\xi\in \H_1\setminus\{0\}$, the map
    \begin{align*}
        v\colon \H &\to L^2(G)\\
            \eta &\mapsto \frac{1}{d_{\pi_1}^{1/2} \|\xi\|} c_{\xi,\eta}
    \end{align*}
defines an isometry satisfying $v\pi(s) = \lambda^\omega_G(s) v$ for all $s\in G$. Since $\pi(s)^*\eta \in \H_1$ if and only if $\Delta_G(s) = \delta$ for any $\eta\in \pi(\sigma(\delta)) \H_1$, it follows that $v [\pi(\sigma(\delta)) \H_1] = 1_{\{\delta\}}(\Delta_G)v$ and hence $vD = d_{\pi_1} \Delta_G$.$\hfill\blacksquare$
\end{rem}

\subsection{Factorial square integrable projective representations} In this subsection, we give the projective representation analogue of the results in \cite[Subsection 4.2]{GGN25}. Towards that end, we move all the characterizations of square integrability for factorial representations in \cite{Moo77} to the setting of projective representations (see also \cite[Proposition 2.3(a)]{Ros78}). We begin by reminding and moving some of the definitions to the projective setting.

Recall that an $\omega$-projective representation $(\pi,\H)$ of a locally compact second countable group $G$ with a 2-cocycle $\omega$ is said to be \textit{factorial} (or a \textit{factor} $\omega$-projective representation) if $\pi(G)''$ is a factor. In this case, $(\pi,\H)$ is called \textit{square integrable} if there exists vectors $\xi,\eta \in \H$ whose coefficient $c_{\xi,\eta}(s) = \langle \pi(s)\xi,\eta\rangle$ defines a non-zero element in $L^2(G)$ (see \cite[Section 2]{Ros78}). We say that two $\omega$-projective representations $(\pi_1,\H_1)$ and $(\pi_2,\H_2)$ are \textit{quasi-equivalent} if there exists an isomorphism $\Phi: \pi_1(G)'' \to \pi_2(G)''$ such that $\Phi(\pi_1(s)) = \pi_2(s)$. 

In \cite{Moo77}, Moore defined the notion of formal degree as a (normalized) semi-invariant weight $\psi$ of degree $\Delta_G$ on the von Neumann algebra $\pi(G)''$ such that there exists some non-zero $x \in \dom(\psi)$ satisfying $s \mapsto \psi(\pi(s)x)$ is a square integrable function on $G$. The definition of semi-invariant for weights on a von Neumann algebra associated to projective representations is defined the same as in \cite{Moo77}, but we mention it here for the convenience of the reader.

\begin{defi}
Let $(\pi,\H)$ be a factorial $\omega$-projective representation of a locally compact second countable group $G$ with a 2-cocycle $\omega: G \times G \to \mathbb{T}$ and let $\Delta: G \to \R_+$ be a continuous group homomorphism of $G$. A non-zero normal semifinite weight $\psi$ on $\pi(G)''$ is said to be semi-invariant of degree $\Delta$ if 
    \[
    \psi(\pi(s) x\pi(s)^*) =\Delta(s)\psi(x), 
    \]
where  $s \in G, x \in (\pi(G)'')_+.\hfill \blacksquare$
\end{defi}

The following result shows us that the notions of quasi-equivalence, square integrability and the existence of a semi-invariant weight associated to an $\omega$-projective representation of $G$ can be lifted to their original definitions but associated to the representation of $\mathbb{T} \rtimes_{(1,\omega)} G$ defined by (\ref{eqn:proj_rep_extension}).

\begin{prop}\label{prop:notion_of_proj_rep_and_rep}
Let $G$ be a locally compact second countable group with a 2-cocycle $\omega:G \times G \to \mathbb{T}$. Then
\begin{enumerate}[label=(\alph*)]
    \item $(\pi,\H)$ is a factorial square integrable $\omega$-projective representation of $G$ if and only if $(\pi_\omega,\H)$ is a factorial square integrable representation of $\mathbb{T} \rtimes_{(1,\omega)}G$, where $\pi_\omega(a,s) :=a\pi(s)$ for each $(a,s) \in \mathbb{T}\rtimes_{(1,\omega)}G$. Furthermore, the semi-invariant weight of degree $\Delta_G$ on $\pi(G)''$ is also a semi-invariant weight of degree $\Delta_{\mathbb{T}\rtimes_{(1,\omega)}G}$ on $\pi_\omega(\mathbb{T}\rtimes_{(1,\omega)}G)''$;\label{part:square_integrable_semi-invariant_weight}
    \item $(\pi,\H)$ is quasi-equivalent to $(\rho,\K)$ as $\omega$-projective representations of $G$ if and only if $(\pi_\omega,\H)$ is quasi-equivalent to $(\rho_\omega,\K)$ as representations of $\mathbb{T}\rtimes_{(1,\omega)} G$; \label{part:quasi-equivalence}
\end{enumerate}
\end{prop}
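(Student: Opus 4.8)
The plan is to prove both parts by unpacking the definitions and transporting everything through the bijective correspondence $\pi \leftrightarrow \pi_\omega$ established just before the statement, using the crucial observation that the von Neumann algebras generated coincide: $\pi(G)'' = \pi_\omega(\mathbb{T}\rtimes_{(1,\omega)}G)''$. First I would verify this algebra identity. Since $\pi_\omega(a,s) = a\pi(s)$, every generator $\pi_\omega(a,s)$ is a scalar multiple of $\pi(s)$, so $\pi_\omega(\mathbb{T}\rtimes_{(1,\omega)}G)'' \subseteq \pi(G)''$; conversely $\pi(s) = \pi_\omega(1,s)$ gives the reverse inclusion. This single identity is what makes everything work, because factoriality, the coefficient functions, and the weights all live on this common von Neumann algebra.

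For part \ref{part:square_integrable_semi-invariant_weight}, factoriality is immediate from the algebra identity, since $\pi(G)''$ is a factor if and only if $\pi_\omega(\mathbb{T}\rtimes_{(1,\omega)}G)''$ is. For square integrability, I would compute the coefficient functions: for $\xi,\eta\in\H$ the coefficient of $\pi_\omega$ is $\langle\pi_\omega(a,s)\xi,\eta\rangle = a\langle\pi(s)\xi,\eta\rangle = a\,c_{\xi,\eta}(s)$ as a function on $\mathbb{T}\rtimes_{(1,\omega)}G$, whose $L^2$-norm (with respect to $\mu_{\mathbb{T}}\times\mu_G$ and $|a|=1$) is a fixed multiple of $\|c_{\xi,\eta}\|_{L^2(\mu_G)}$. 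Hence one coefficient is a nonzero $L^2$-function exactly when the other is, giving the equivalence of square integrability. For the semi-invariance of the weight $\psi$, I would use the degree identity $\Delta_{\mathbb{T}\rtimes_{(1,\omega)}G}(a,s) = \Delta_G(s)$ from equation~(\ref{eqp:modular_function_of_central_extension}) together with $\pi_\omega(a,s)x\pi_\omega(a,s)^* = a\pi(s)x\pi(s)^*\bar a = \pi(s)x\pi(s)^*$; thus $\psi(\pi_\omega(a,s)x\pi_\omega(a,s)^*) = \psi(\pi(s)x\pi(s)^*) = \Delta_G(s)\psi(x) = \Delta_{\mathbb{T}\rtimes_{(1,\omega)}G}(a,s)\psi(x)$, so the same weight is semi-invariant of the correct degree for the representation. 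I should also check the defining condition that $s\mapsto\psi(\pi(s)x)$ square integrable matches the analogous condition for $\pi_\omega$, which again follows from the scalar computation since $\psi(\pi_\omega(a,s)x) = a\psi(\pi(s)x)$.

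For part \ref{part:quasi-equivalence}, quasi-equivalence of $(\pi,\H)$ and $(\rho,\K)$ means an isomorphism $\Phi:\pi(G)''\to\rho(G)''$ with $\Phi(\pi(s))=\rho(s)$. Using the algebra identities $\pi(G)''=\pi_\omega(\mathbb{T}\rtimes_{(1,\omega)}G)''$ and $\rho(G)''=\rho_\omega(\mathbb{T}\rtimes_{(1,\omega)}G)''$, the very same map $\Phi$ is an isomorphism between the algebras generated by $\pi_\omega$ and $\rho_\omega$, and I only need to verify it intertwines the generators: $\Phi(\pi_\omega(a,s)) = \Phi(a\pi(s)) = a\Phi(\pi(s)) = a\rho(s) = \rho_\omega(a,s)$. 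The converse is symmetric, restricting an intertwiner for $\pi_\omega,\rho_\omega$ to the generators $\pi_\omega(1,s)=\pi(s)$.

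The main obstacle is not conceptual but bookkeeping: I must be careful that square integrability for $\pi_\omega$ is stated with respect to the Haar measure $\mu_{\mathbb{T}}\times\mu_G$ of the central extension, and confirm that integrating the scalar $|a|^2=1$ over the compact group $\mathbb{T}$ only contributes a finite normalizing constant, so that finiteness of the two $L^2$-norms is genuinely equivalent. I should also make sure the measurability hypotheses in the definition of an $\omega$-projective representation transfer correctly under the correspondence, but this is routine given the Borel structure on $\mathbb{T}\rtimes_{(1,\omega)}G$ and the fact that $\pi(s)=\pi_\omega(1,s)$.
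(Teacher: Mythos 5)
Your proposal is correct and follows essentially the same route as the paper's own proof: both rest on the identity $\pi_\omega(\mathbb{T}\rtimes_{(1,\omega)}G)'' = \pi(G)''$, the scalar relation between coefficient functions together with compactness of $\mathbb{T}$ for square integrability, and the modular function identity $\Delta_{\mathbb{T}\rtimes_{(1,\omega)}G}(a,s)=\Delta_G(s)$ for the semi-invariance claim. Your write-up is in fact slightly more detailed than the paper's (which compresses the semi-invariance and quasi-equivalence verifications to one-line remarks), but the content is identical.
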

\begin{proof}
Assume that $\omega$ is fully normalized (see discussion before \cite[Lemma 2]{Kle74} and \cite[Proposition 2.4]{Sut80}). Denote $G(\omega):= \mathbb{T}\rtimes_{(1,\omega)}G$.

\noindent\textbf{(a)}: By definition of $\pi_\omega$, we have that $\pi_\omega(G(\omega))'' = \pi(G)''$. It follows that $\pi_\omega$ is factorial if and only if $\pi$ is factorial. Then for $\xi,\eta \in H$, we have 
    \[
        c_{\xi,\eta}(s) = \langle \pi(s)\xi, \eta\rangle =  \overline{a} \langle \pi_\omega(a,s)\xi,\eta\rangle=\overline{a}c_{\xi,\eta}(a,s),
    \]
where $(a,s) \in G(\omega)$. Since $\mathbb{T}$ is compact, it follows that $\pi$ is square integrable if and only if $\pi_\omega$ is square integrable. The last claim follows from $\Delta_{G(\omega)}(G(\omega)) = \Delta_G(G)$.\\

\noindent \textbf{(b)}: This follows from the fact that $\pi_\omega(G(\omega))\cong \pi(G)$ and $\rho_\omega(G(\omega)) \cong \rho(G)$.
\end{proof}

Thus we obtain the projective representation analogue of \cite[Theorem 3]{Moo77} (see also \cite[Proposition 2.(a)]{Ros78}). 

\begin{cor}
Let $(\pi,\H)$ be a factorial $\omega$-projective representation of a locally compact second countable group $G$ with a 2-cocycle $\omega:G \times G \to \mathbb{T}$. Then the following are equivalent:
\begin{enumerate}[label=(\roman*)]
    \item $(\pi,\H)$ is square integrable;
    \item $(\pi,\H)$ is quasi-equivalent to an $\omega$-projective subrepresentation of the left regular $\omega$-projective representation of $G$.
    \item the von Neumann algebra associated to $(\pi,\H)$ admits a semi-invariant weight $\psi$ of degree $\Delta_G$ such that there exists some non-zero $x \in \dom(\psi)$ satisfying $s \mapsto \psi(\pi(s)x)$ is a square integrable function on $G$.
\end{enumerate}
\end{cor}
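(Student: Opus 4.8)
The plan is to reduce the entire statement to its genuine-representation counterpart, namely Moore's characterization \cite[Theorem 3]{Moo77} applied to the factorial representation $(\pi_\omega,\H)$ of the second countable locally compact group $G(\omega):=\mathbb{T}\rtimes_{(1,\omega)}G$, and then to translate each of the three conditions back to the projective setting using Proposition~\ref{prop:notion_of_proj_rep_and_rep}. Since $\pi_\omega(G(\omega))''=\pi(G)''$, the representation $(\pi_\omega,\H)$ is factorial precisely because $(\pi,\H)$ is, so Moore's theorem applies and furnishes the equivalence of the analogous three conditions (i$'$)--(iii$'$) stated for $(\pi_\omega,\H)$ on $G(\omega)$.

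The equivalences (i)$\Leftrightarrow$(i$'$) and (iii)$\Leftrightarrow$(iii$'$) are immediate from Proposition~\ref{prop:notion_of_proj_rep_and_rep}\ref{part:square_integrable_semi-invariant_weight}: square integrability transfers, and the semi-invariant weight $\psi$ of degree $\Delta_G$ on $\pi(G)''$ is exactly a semi-invariant weight of degree $\Delta_{G(\omega)}$ on $\pi_\omega(G(\omega))''$. For the coefficient condition in (iii), one uses $\psi(\pi_\omega(a,s)x)=a\,\psi(\pi(s)x)$ together with $\Delta_{G(\omega)}(a,s)=\Delta_G(s)$ and the compactness of $\mathbb{T}$, so that $(a,s)\mapsto\psi(\pi_\omega(a,s)x)$ is square integrable on $G(\omega)$ if and only if $s\mapsto\psi(\pi(s)x)$ is square integrable on $G$.

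The substantive step is the equivalence (ii)$\Leftrightarrow$(ii$'$), relating quasi-equivalence with an $\omega$-projective subrepresentation of $\lambda_G^\omega$ to quasi-equivalence with a subrepresentation of the genuine left regular representation $\lambda_{G(\omega)}$. Here I would invoke Proposition~\ref{prop:decomp_of_central_extension} with $A=\mathbb{T}$: the central projection $p_{G,1}$ associated to the character with $(\gamma\mid a)=a$ satisfies $\gamma\omega=\omega$, so the restriction of $\lambda_{G(\omega)}$ to $p_{G,1}L^2(G(\omega))$ is, via the unitary $U_{G,1}$ of equation (\ref{eqn:equivalence_of_twisted_and_extension}), unitarily equivalent to the lift $(\lambda_G^\omega)_\omega$ of the left regular $\omega$-projective representation. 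The key observation is that quasi-equivalence preserves the central character on $\mathbb{T}$: if $\Phi$ implements a quasi-equivalence of $(\pi_\omega,\H)$ with some subrepresentation $(\rho,\K)$ of $\lambda_{G(\omega)}$, then $\Phi$ is a unital $*$-isomorphism and hence fixes scalars, and since $\pi_\omega(a,e)=a\cdot 1_\H$ this forces $\rho(a,e)=a\cdot 1_\K$. As the $\mathbb{T}$-action on $p_{G,\gamma}L^2(G(\omega))$ is by the scalar $(\gamma\mid a)$, the subrepresentation $(\rho,\K)$ must lie inside the $p_{G,1}$-component. Consequently $(\pi_\omega,\H)$ is quasi-equivalent to a subrepresentation of $\lambda_{G(\omega)}$ if and only if it is quasi-equivalent to a subrepresentation of $(\lambda_G^\omega)_\omega$; by Proposition~\ref{prop:notion_of_proj_rep_and_rep}\ref{part:quasi-equivalence} this happens precisely when $(\pi,\H)$ is quasi-equivalent to the corresponding $\omega$-projective subrepresentation of $\lambda_G^\omega$, giving (ii)$\Leftrightarrow$(ii$'$).

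I expect the main obstacle to be precisely the bookkeeping in (ii)$\Leftrightarrow$(ii$'$): one must check that restriction to an invariant subspace commutes with the lifting operation $\pi\mapsto\pi_\omega$, so that $\omega$-projective subrepresentations of $\lambda_G^\omega$ correspond exactly to subrepresentations of $(\lambda_G^\omega)_\omega$ supported on the $p_{G,1}$-component, and that the central-character argument correctly pins every competing subrepresentation into that single component. Once these identifications are in place, the three equivalences assemble directly from Moore's theorem.
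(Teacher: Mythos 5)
Your proposal is correct and follows essentially the same route as the paper: the corollary is obtained by applying Moore's theorem \cite[Theorem 3]{Moo77} to the lifted factorial representation $(\pi_\omega,\H)$ of $\mathbb{T}\rtimes_{(1,\omega)}G$ and transferring each of the three conditions back to the projective setting via Proposition~\ref{prop:notion_of_proj_rep_and_rep}. Your extra bookkeeping for (ii) — using the central character $\pi_\omega(a,e)=a\cdot 1_\H$ to pin any quasi-equivalent subrepresentation of $\lambda_{\mathbb{T}\rtimes_{(1,\omega)}G}$ into the $p_{G,1}$-component, which $U_{G,1}$ identifies with the lift of $\lambda_G^\omega$ — is precisely the detail the paper leaves implicit, and you handle it correctly.
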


By \cite[Proposition 2.1 and Theorem 1]{Moo77}, the semi-invariant weight $\psi$ of $\Delta_G$ in Proposition \ref{prop:notion_of_proj_rep_and_rep}.\ref{part:square_integrable_semi-invariant_weight} is faithful and unique up to scaling. By \cite[Theorem 4]{Moo77}, we can always normalize $\psi$ to satisfy 
    \[
        \int_G \psi(\pi(s) x) \overline{\psi(\pi(s)y)} d\mu_G(s) = \psi(y^*x) \qquad x,y \in \dom(\psi),
    \]
and in this case $\psi$ is called the \textit{formal degree} of $(\pi,\H)$ (and $(\pi_\omega,\H)$). Note that if $G$ is unimodular, the formal degree is a tracial weight. 

Finally we prove the projective analogue of \cite[Theorem 4.5]{GGN25} and \cite[Theorem 4.6]{GGN25}. That is, the formal degree associated to the projective representation of an almost unimodular group is almost periodic. 

\begin{thm}\label{thm:formal_degree_is_almost_periodic}
Let $(\pi, \H)$ be a factorial square integrable $\omega$-projective representation of a second countable almost unimodular group $G$ with a 2-cocycle $\omega:G \times G \to \mathbb{T}$. Then the formal degree $\psi$ of $(\pi,\H)$ is almost periodic with $(\pi(G)'')^\psi=\pi(\ker{\Delta_G})''$.
\end{thm}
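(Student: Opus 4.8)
The plan is to reduce everything to the representation case on the central extension $G(\omega):=\mathbb{T}\rtimes_{(1,\omega)}G$ and then invoke \cite[Theorems 4.5 and 4.6]{GGN25}. Following the preceding proofs, I would first assume $\omega$ is fully normalized and let $(\pi_\omega,\H)$ be the representation of $G(\omega)$ given by \eqref{eqn:proj_rep_extension}. Since $G$ is second countable and almost unimodular, Proposition~\ref{prop:extension_of_almost_unimodularity} shows $G(\omega)$ is again a second countable almost unimodular group, and \eqref{eqp:modular_function_of_central_extension} gives $\Delta_{G(\omega)}(a,s)=\Delta_G(s)$, whence $\Delta_{G(\omega)}(G(\omega))=\Delta_G(G)$.

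I would then transfer the formal degree across the correspondence. By Proposition~\ref{prop:notion_of_proj_rep_and_rep}(a), $(\pi_\omega,\H)$ is a factorial square integrable representation of $G(\omega)$ with $\pi_\omega(G(\omega))''=\pi(G)''$, and the semi-invariant weight $\psi$ of degree $\Delta_G$ on $\pi(G)''$ is simultaneously semi-invariant of degree $\Delta_{G(\omega)}$ on $\pi_\omega(G(\omega))''$. To see it is the \emph{normalized} formal degree on both sides, I would use $\psi(\pi_\omega(a,s)x)=a\,\psi(\pi(s)x)$ together with the fact that $\mathbb{T}$ carries normalized Haar measure: integrating the orthogonality relation over $G(\omega)=\mathbb{T}\times G$ collapses the $\mathbb{T}$-factor to $1$ and returns exactly the normalization condition over $G$. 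Hence $\psi$ is the formal degree of $(\pi_\omega,\H)$ as well as of $(\pi,\H)$.

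With $\psi$ identified as the formal degree of the factorial square integrable representation $(\pi_\omega,\H)$ of the almost unimodular group $G(\omega)$, \cite[Theorems 4.5 and 4.6]{GGN25} apply verbatim and yield that $\psi$ is almost periodic with $(\pi_\omega(G(\omega))'')^{\psi}=\pi_\omega(\ker\Delta_{G(\omega)})''$. It then remains to rewrite the centralizer in terms of $\pi$. From $\Delta_{G(\omega)}(a,s)=\Delta_G(s)$ I get $\ker\Delta_{G(\omega)}=\mathbb{T}\rtimes_{(1,\omega)}\ker\Delta_G$, and since $\pi_\omega(a,s)=a\pi(s)$ with $\pi(e)=I$, the elements $\pi_\omega(a,e)=aI$ are scalars lying in every unital von Neumann algebra, so $\pi_\omega(\ker\Delta_{G(\omega)})''=\pi(\ker\Delta_G)''$. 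Combining these gives that $\psi$ is almost periodic with $(\pi(G)'')^\psi=\pi(\ker\Delta_G)''$, as claimed.

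The step I expect to be most delicate is not conceptual but the bookkeeping in the second paragraph: one must confirm that it is the correctly \emph{normalized} formal degree --- and not merely the semi-invariant weight up to scaling --- that is preserved under passage to $G(\omega)$. Concretely this means checking that the $\mathbb{T}$-integration contributes a factor of exactly $1$, after which uniqueness up to scaling from \cite{Moo77} forces the two normalized weights to coincide. Everything else reduces to already-established facts: almost unimodularity of $G(\omega)$, the factoriality and square integrability of $\pi_\omega$, and the purely algebraic centralizer computation.
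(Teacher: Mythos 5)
Your proposal is correct and takes essentially the same route as the paper: pass to the representation $(\pi_\omega,\H)$ of $\mathbb{T}\rtimes_{(1,\omega)}G$, transfer the formal degree via Proposition~\ref{prop:notion_of_proj_rep_and_rep}.\ref{part:square_integrable_semi-invariant_weight}, apply \cite[Theorem 4.5]{GGN25}, and identify $\pi_\omega(\mathbb{T}\rtimes_{(1,\omega)}\ker{\Delta_G})''=\pi(\ker{\Delta_G})''$. The normalization bookkeeping over the $\mathbb{T}$-factor and the scalar-absorption argument for the centralizer that you spell out are precisely the details the paper delegates to Proposition~\ref{prop:notion_of_proj_rep_and_rep}.\ref{part:square_integrable_semi-invariant_weight} and the discussion following it, so they are a welcome but not divergent elaboration.
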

\begin{proof}
Assume that $\omega$ is fully normalized (see discussion before \cite[Lemma 2]{Kle74} and \cite[Proposition 2.4]{Sut80}). Let $(\pi_\omega,\H)$ be the representation of $\mathbb{T} \rtimes_{(1,\omega)} G$ given by equation (\ref{eqn:proj_rep_extension}). Then by Proposition \ref{prop:notion_of_proj_rep_and_rep}.\ref{part:square_integrable_semi-invariant_weight}, the formal degree $\psi$ of $(\pi,\H)$ is also the formal degree of $(\pi_\omega,\H)$. Hence by \cite[Theorem 4.5]{GGN25}, we have 
    \[
        (\pi(G)'')^\psi\cong (\pi_\omega(\mathbb{T} \rtimes_{(1,\omega)}G)'')^\psi=\pi_\omega(\mathbb{T} \rtimes_{(1,\omega)}\ker{\Delta_G})''\cong\pi(\ker{\Delta_G})'',
    \]
as claimed.
\end{proof}

\begin{thm}\label{thm:sq_int_factor_proj_reps}
Let $G$ be a second countable almost unimodular group with a 2-cocycle $\omega:G \times G \to \mathbb{T}$ and let $(\pi_1, \H_1)$ be a factorial square integrable $\omega$-projective representation of $\ker\Delta_G$. Then $\emph{Ind}_{\ker \Delta_G}^G(\pi_1,\H_1)$ is also factorial and square integrable.
\end{thm}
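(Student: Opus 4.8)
The plan is to reduce the statement to its non-projective counterpart using the central extension machinery developed in Proposition~\ref{prop:notion_of_proj_rep_and_rep} and the induction-compatibility discussion preceding Theorem~\ref{thm:proj_reps_induced_from_kernel}. First I would pass to the central extension: set $G(\omega) := \mathbb{T}\rtimes_{(1,\omega)}G$ and $G_1(\omega):=\mathbb{T}\rtimes_{(1,\omega)}\ker\Delta_G$, and let $(\pi_{1,\omega},\H_1)$ be the representation of $G_1(\omega)$ obtained from $(\pi_1,\H_1)$ via equation~(\ref{eqn:proj_rep_extension}). By Proposition~\ref{prop:notion_of_proj_rep_and_rep}.\ref{part:square_integrable_semi-invariant_weight}, $(\pi_{1,\omega},\H_1)$ is a factorial square integrable representation of $G_1(\omega)$. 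Since $G$ is almost unimodular, Proposition~\ref{prop:extension_of_almost_unimodularity} gives that $G(\omega)$ is almost unimodular, with $\Delta_{G(\omega)}(G(\omega)) = \Delta_G(G)$ and $\ker\Delta_{G(\omega)} = G_1(\omega)$ by equation~(\ref{eqp:modular_function_of_central_extension}).

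The key step is to invoke the unimodular-kernel analogue \cite[Theorem 4.6]{GGN25}, which asserts precisely that inducing a factorial square integrable representation of $\ker\Delta_{G(\omega)}$ up to $G(\omega)$ yields a factorial square integrable representation. Applying this to $(\pi_{1,\omega},\H_1)$ shows that $\Ind_{G_1(\omega)}^{G(\omega)}(\pi_{1,\omega},\H_1)$ is factorial and square integrable. Next I would use the compatibility of induction with the extension construction, exactly as in the proof of Theorem~\ref{thm:proj_reps_induced_from_kernel}: by the discussion preceding that theorem (built on \cite[Theorem 4.1]{Mac58} and \cite[Theorem 12.1]{Mac52}), the induced representation $\Ind_{G_1(\omega)}^{G(\omega)}(\pi_{1,\omega},\H_1)$ restricts to $\mathbb{T}$ as a multiple of a one-dimensional character, and the $\omega$-projective representation of $G$ it determines via \cite[Theorem 2.1]{Mac58} is exactly $\Ind_{\ker\Delta_G}^G(\pi_1,\H_1)$. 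Writing $(\pi,\H) := \Ind_{\ker\Delta_G}^G(\pi_1,\H_1)$ and letting $(\pi_\omega,\H)$ be its extension, we thus have $(\pi_\omega,\H)\cong \Ind_{G_1(\omega)}^{G(\omega)}(\pi_{1,\omega},\H_1)$, which is factorial and square integrable.

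Finally I would descend back to the projective setting via the reverse direction of Proposition~\ref{prop:notion_of_proj_rep_and_rep}.\ref{part:square_integrable_semi-invariant_weight}: since $(\pi_\omega,\H)$ is a factorial square integrable representation of $G(\omega)$ and arises as the extension of the $\omega$-projective representation $(\pi,\H)$ of $G$, the equivalence established there forces $(\pi,\H)$ itself to be factorial square integrable. This completes the argument.

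The main obstacle I anticipate is the identification $(\pi_\omega,\H)\cong \Ind_{G_1(\omega)}^{G(\omega)}(\pi_{1,\omega},\H_1)$, i.e. verifying that induction genuinely commutes with the passage to and from the central extension in a way that preserves the factoriality and square-integrability data. This is the one place where the projective structure interacts nontrivially with induction; everything else is a clean transfer through Proposition~\ref{prop:notion_of_proj_rep_and_rep}. Fortunately this compatibility has already been set up in the discussion preceding Theorem~\ref{thm:proj_reps_induced_from_kernel} and reused in Theorem~\ref{thm:sq_int_irred_proj_reps}, so the same normalized-section identification of $\H$ as $\bigoplus_{\delta\in\Delta_G(G)}\pi(\sigma(\delta))\H_1$ applies verbatim here.
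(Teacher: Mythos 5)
Your proposal is correct and follows essentially the same route as the paper's proof: lift $(\pi_1,\H_1)$ to $G_1(\omega)=\mathbb{T}\rtimes_{(1,\omega)}\ker\Delta_G$ via Proposition~\ref{prop:notion_of_proj_rep_and_rep}.\ref{part:square_integrable_semi-invariant_weight}, apply the non-projective result \cite[Theorem 4.6]{GGN25} to the induction $\Ind_{G_1(\omega)}^{G(\omega)}$, identify this induced representation with the extension of $\Ind_{\ker\Delta_G}^G(\pi_1,\H_1)$ via Mackey's compatibility \cite[Theorem 4.1]{Mac58}, and descend back with Proposition~\ref{prop:notion_of_proj_rep_and_rep}.\ref{part:square_integrable_semi-invariant_weight}. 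The only difference is that you spell out the intermediate identifications (almost unimodularity of $G(\omega)$ and the induction-extension compatibility) in more detail than the paper, which handles them by citation.
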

\begin{proof}
Assume that $\omega$ is fully normalized (see discussion before \cite[Lemma 2]{Kle74} and \cite[Proposition 2.4]{Sut80}). Denote $G_1:=\ker\Delta_G$, $G(\omega):=\mathbb{T} \rtimes_{(1,\omega)}G$ and $G_1(\omega):=\mathbb{T} \rtimes_{(1,\omega)}G_1$. Let $(\pi_{1,\omega}, \H_1)$ be the representation of $G_1(\omega)$ given by equation (\ref{eqn:proj_rep_extension}). By Proposition \ref{prop:notion_of_proj_rep_and_rep}.\ref{part:square_integrable_semi-invariant_weight}, we have that $(\pi_{1,\omega}, \H_1)$ is a factorial square integrable representation of $G_1(\omega)$. Then by \cite[Theorem 4.6]{GGN25}, $\text{Ind}_{G_1(\omega)}^{G(\omega)}(\pi_{1,\omega},\H_1)$ is also factorial and square integrable. Thus by \cite[Theorem 4.1]{Mac58} and Proposition \ref{prop:notion_of_proj_rep_and_rep}.\ref{part:square_integrable_semi-invariant_weight}, we have that the induced $\omega$-projective representation $\text{Ind}_{G_1}^G(\pi_1,\H_1)$ of $G$ is also a factorial and square integrable. 
\end{proof}

\section{Murray--von Neumann dimension with a twist}\label{sec:Murray-von_Neumann_dim_twist}
In this section, we move all the results from \cite[Subsection 5.3]{GGN25} to the twisted group von Neumann algebras case. Recall that a closed subgroup $H \leq G$ of a locally compact group has \textit{finite covolume} if the quotient space $G/H$ admits a finite (non-trivial) $G$-invariant Radon measure $\mu_{G/H}$. We normalize this measure in such a way that 
    \begin{equation}\label{eqn:quotient_Haar_measure_formula}
        \int_G fd\mu_G = \int_{G/H}\int_H f(st) d\mu_H(t)d\mu_{G/H}(sH) \quad f \in L^1(G),
    \end{equation}
where $\mu_G$ and $\mu_H$ are fixed left Haar measures on $G$ and $H$, respectively (see \cite[Theorem 2.49]{Fol16}). In this case, the \emph{covolume} of $(H, \mu_H) \leq (G ,\mu_G)$ is the quantity
    \[
        [\mu_G: \mu_H] := \mu_{G/H}(G/H).
    \]
If $H \leq G$ is a finite index inclusion and $\mu_H = \mu_G|_{\mathcal{B}(H)}$, then $\mu_{G/H}$ is the counting measure, which gives us $[\mu_G: \mu_H]= [G:H]$.

Given a closed subgroup $H$ of $G$ and a 2-cocycle $\omega$ on G, we have that $\omega$ is a 2-cocycle on $H$ and we identify $\mathbb{T}\rtimes_{(1,\omega)} H$ with the subgroup generated by $1 \times H$ and $\mathbb{T}\times e$ in $\mathbb{T}\rtimes_{(1,\omega)}G$. The generated subgroup is closed since $(\mathbb{T}\rtimes_{(1,\omega)} G)/\mathbb{T} \cong G$. It follows that $(\mathbb{T} \rtimes_{(1,\omega)}G)/(\mathbb{T} \rtimes_{(1,\omega)}H)\cong G/H$ and so $\mathbb{T} \rtimes_{(1,\omega)}H\leq \mathbb{T}\rtimes_{(1,\omega)}G$ is a finite covolume subgroup if and only if $H \leq G$ is a finite covolume subgroup. Additionally,  $[\mu_\mathbb{T} \times \mu_G : \mu_\mathbb{T} \times \mu_G]=[\mu_G:\mu_H]$, where $\mu_\mathbb{T}$ is a normalized Haar measure on $\mathbb{T}$. The following result is \cite[Proposition 5.7]{GGN25} associated to $\mathbb{T} \rtimes_{(1,\omega)} G$ and $G$.

\begin{prop}\label{prop:finite_covolume_subgroups_twisted} Let $G$ be a second countable almost unimodular group with finite covolume subgroup $H \leq G$ and $\omega: G\times G \to \mathbb{T}$ be a 2-cocycle. Then:
\begin{enumerate}[label=(\alph*)]
    \item $H$ and $\mathbb{T}\rtimes_{(1,\omega)} H$ are almost unimodular;
    \item $\Delta_{\mathbb{T} \rtimes_{(1,\omega)} G}|_{\mathbb{T}\rtimes_{(1,\omega)} H}=\Delta_{\mathbb{T} \rtimes_{(1,\omega)} H}$ and $\Delta_G|_H=\Delta_H$;
    \item $\ker\Delta_{\mathbb{T} \rtimes_{(1,\omega)} H}$ has finite covolume in $\ker\Delta_{\mathbb{T} \rtimes_{(1,\omega)} G}$;
    \item $\Delta_{\mathbb{T} \rtimes_{(1,\omega)} H}(\mathbb{T}\rtimes_{(1,\omega)} H)$ is a finite index subgroup of $\Delta_{\mathbb{T}\rtimes_{(1,\omega)} G}(\mathbb{T} \rtimes_{(1,\omega)} G)$.
\end{enumerate}
\end{prop}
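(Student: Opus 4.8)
The plan is to deduce every claim from the untwisted version \cite[Proposition 5.7]{GGN25}, applied to the central extension, together with the modular-function formula (\ref{eqp:modular_function_of_central_extension}) and the almost-unimodularity transfer of Proposition~\ref{prop:extension_of_almost_unimodularity}. Write $G(\omega) := \mathbb{T}\rtimes_{(1,\omega)} G$ and $H(\omega):=\mathbb{T}\rtimes_{(1,\omega)} H$. Since $G$ is second countable and $\mathbb{T}$ is compact metrizable, $G(\omega)$ is a locally compact second countable group, and it is almost unimodular by Proposition~\ref{prop:extension_of_almost_unimodularity} (whose second-countability hypothesis holds with $A=\mathbb{T}$). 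By the discussion preceding the statement, $H(\omega)$ is a closed subgroup of $G(\omega)$ with $G(\omega)/H(\omega) \cong G/H$, so $H(\omega) \leq G(\omega)$ has finite covolume precisely because $H \leq G$ does. Thus all hypotheses of \cite[Proposition 5.7]{GGN25} are met for the pair $H(\omega) \leq G(\omega)$.

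First I would apply \cite[Proposition 5.7]{GGN25} directly to $H(\omega) \leq G(\omega)$. This yields at once that $H(\omega)$ is almost unimodular, that $\Delta_{G(\omega)}|_{H(\omega)} = \Delta_{H(\omega)}$, that $\ker\Delta_{H(\omega)}$ has finite covolume in $\ker\Delta_{G(\omega)}$, and that $\Delta_{H(\omega)}(H(\omega))$ is a finite index subgroup of $\Delta_{G(\omega)}(G(\omega))$. These are exactly statement (a) for $H(\omega)$, the extension half of (b), statement (c), and statement (d). It then remains only to transfer the two assertions that concern $G$ and $H$ themselves.

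Next I would read off the $G$-level statements from equation (\ref{eqp:modular_function_of_central_extension}), which gives $\Delta_{G(\omega)}(a,s) = \Delta_G(s)$ for $(a,s)\in G(\omega)$ and $\Delta_{H(\omega)}(a,s) = \Delta_H(s)$ for $(a,s)\in H(\omega)$. Evaluating the already-established identity $\Delta_{G(\omega)}|_{H(\omega)} = \Delta_{H(\omega)}$ at an element $(a,s)$ with $s \in H$ gives $\Delta_G(s) = \Delta_H(s)$, i.e.\ $\Delta_G|_H = \Delta_H$, completing (b). Finally, since $H(\omega)$ is almost unimodular, Proposition~\ref{prop:extension_of_almost_unimodularity} applied to $H$ shows that $H$ is almost unimodular, completing (a).

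The argument is essentially a bookkeeping reduction, so there is no serious obstacle. The only points requiring care are verifying that $G(\omega)$ is genuinely second countable and almost unimodular, so that \cite[Proposition 5.7]{GGN25} is available for the extension pair, and ensuring that the modular-function formula (\ref{eqp:modular_function_of_central_extension}) is invoked with the same identification $A=\mathbb{T}$ on both $G(\omega)$ and $H(\omega)$, so that the extension-level identity on $H(\omega)$ descends correctly to the equality $\Delta_G|_H=\Delta_H$ and to the subgroup $\Delta_H(H)\leq \Delta_G(G)$ appearing in (d).
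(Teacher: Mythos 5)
Your proposal is correct and matches the paper's own (implicit) argument: the paper states this result as precisely \cite[Proposition 5.7]{GGN25} applied to the pair $\mathbb{T}\rtimes_{(1,\omega)}H \leq \mathbb{T}\rtimes_{(1,\omega)}G$, using the preceding discussion that this inclusion is closed with finite covolume, together with equation (\ref{eqp:modular_function_of_central_extension}) and Proposition~\ref{prop:extension_of_almost_unimodularity}. The only cosmetic difference is that the paper also invokes the untwisted proposition directly on $H\leq G$ for the $G$-level claims, whereas you descend them from the extension level via the modular-function formula and the ``if and only if'' in Proposition~\ref{prop:extension_of_almost_unimodularity}; both routes are equally valid.
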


Our goal is to apply the Murray--von Neumann dimension theory for strictly semifinite weights of \cite{GGLN25} to the twisted group von Neumann algebra with its twisted Plancherel weight associated to a second countable almost unimodular group equipped with a 2-cocycle. First, we show that the decomposition of $L(\mathbb{T}\rtimes_{(1,\omega)} G)$, as seen in Proposition \ref{prop:decomp_of_central_extension}, also holds for the basic construction of $\langle L(\mathbb{T}\rtimes_{(1,\omega)} G), \varphi_{\mathbb{T}\rtimes_{(1,\omega)} G}\rangle$, when $G$ is almost unimodular. Note that under the identification $L^2(L_\omega(G),\varphi^\omega_G)\cong L^2(G)$, we obtain $e_{\varphi^\omega_G} = 1_{\ker{\Delta_G}}$, or more generally 
    \[
        \lambda^\omega_G(s) e_{\varphi^\omega_G} \lambda^\omega_G(s)^* = 1_{\Delta_G^{-1}(\{\delta\})},
    \]
for any $s\in G$ with $\Delta_G(s)=\delta$.

\begin{lem} \label{lem:decomposition_of_basic_construction}
Let $G$ be a second countable almost unimodular group with a 2-cocycle $\omega: G \times G \to \mathbb{T}$. Then 
    \[
        \langle L(\mathbb{T}\rtimes_{(1,\omega)} G), \varphi_{\mathbb{T}\rtimes_{(1,\omega)} G}\rangle = \bigoplus_{n \in \Z} \langle L(\mathbb{T}\rtimes_{(1,\omega)} G), \varphi_{\mathbb{T}\rtimes_{(1,\omega)} G}\rangle p_n,
    \]
where $\{p_n\}_{n\in \Z}$ are central projections summing up to 1 defined in (\ref{eqn:projections_onto_the_twist}) and the homomorphism $\Omega_{G,n}$ of Proposition~\ref{prop:decomp_of_central_extension} can be extended to a normal, unital $*$-homomorphism $\widetilde{\Omega}_{G,n} :\langle L(\mathbb{T}\rtimes_{(1,\omega)} G), \varphi_{\mathbb{T}\rtimes_{(1,\omega)} G}\rangle \to  \langle L_{\omega^n}(G), e_{\varphi^{\omega^n}_G}\rangle $, such that 
    \[
        \widetilde{\Omega}_{G,n}(e_{\varphi_{\mathbb{T} \rtimes_{(1,\omega)} G}}) = e_{\varphi^{\omega^n}_G},
    \]
where $\omega^n(s,t)= \omega(s,t)^n$ is a 2-cocycle of $G$. Furthermore, for each $n \in \Z$, the restriction of $\widetilde{\Omega}_{G,n}$ on $\langle L(\mathbb{T}\rtimes_{(1,\omega)} G), \varphi_{\mathbb{T}\rtimes_{(1,\omega)} G}\rangle p_n$ is an isomorphism. 
\end{lem}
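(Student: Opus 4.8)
The plan is to transport the decomposition and isomorphisms of Proposition~\ref{prop:decomp_of_central_extension} across the Jones projection. Write $M:=L(\mathbb{T}\rtimes_{(1,\omega)} G)$ and $\varphi:=\varphi_{\mathbb{T}\rtimes_{(1,\omega)} G}$. Since $G$ is almost unimodular, so is $\mathbb{T}\rtimes_{(1,\omega)} G$ by Proposition~\ref{prop:extension_of_almost_unimodularity}, hence $\varphi$ is almost periodic and in particular strictly semifinite by Proposition~\ref{prop:twisted_Plancherel_weight_ap}; the same applies to $\varphi^{\omega^n}_G$ on $L_{\omega^n}(G)$, so all the basic constructions and Jones projections in the statement are defined. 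Identifying $\mathbb{T}\hat{\ }\cong \Z$ via $a\mapsto a^n$, Proposition~\ref{prop:decomp_of_central_extension} (with $A=\mathbb{T}$, noting $\gamma_n\omega=\omega^n$) supplies the central projections $p_n$ of (\ref{eqn:projections_onto_the_twist}), the corner isomorphisms $\Omega_{G,n}\colon Mp_n\to L_{\omega^n}(G)$ carrying $\varphi|_{Mp_n}$ to $\varphi^{\omega^n}_G$, and the unitaries $U_{G,n}$ of (\ref{eqn:unitary_of_the_extension}) implementing them.

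First I would promote the $p_n$ to central projections of the basic construction. Because the center of a von Neumann algebra lies in the centralizer of every faithful normal semifinite weight, $p_n\in Z(M)\subseteq M^\varphi$, and the Jones projection $e_\varphi$ commutes with $M^\varphi$; hence $[e_\varphi,p_n]=0$ and the $p_n$ are central in $\langle M,e_\varphi\rangle=(M\cup\{e_\varphi\})''$. As they sum to $1$, this gives the displayed direct sum. Compressing by the central projection $p_n$ then yields $\langle M,e_\varphi\rangle p_n=(Mp_n\cup\{e_\varphi p_n\})''$ acting on $p_nL^2(M,\varphi)$. Under the identification $p_nL^2(M,\varphi)=L^2(Mp_n,\varphi|_{Mp_n})$ one has $(Mp_n)^{\varphi|_{Mp_n}}=M^\varphi p_n$ and $e_\varphi p_n$ is exactly the Jones projection of the inclusion $M^\varphi p_n\subseteq Mp_n$, so this corner is the basic construction of $Mp_n$.

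Next I would transport everything through $U_{G,n}$. Using the remark preceding the lemma together with (\ref{eqp:modular_function_of_central_extension}), the Jones projection $e_\varphi$ is multiplication by $1_{\ker\Delta_{\mathbb{T}\rtimes_{(1,\omega)}G}}=1_{\mathbb{T}\times\ker\Delta_G}$, which depends only on the $G$-variable; a direct computation with $(U_{G,n}f)(s)=f(0,s)$ then gives $U_{G,n}e_\varphi U_{G,n}^*=1_{\ker\Delta_G}=e_{\varphi^{\omega^n}_G}$. Extending $U_{G,n}$ to a partial isometry on $L^2(\mathbb{T}\rtimes_{(1,\omega)}G)$ as in Proposition~\ref{prop:decomp_of_central_extension}, so that $U_{G,n}^*U_{G,n}=p_n$ and $U_{G,n}U_{G,n}^*=1$, I would set $\widetilde{\Omega}_{G,n}(X):=U_{G,n}XU_{G,n}^*$. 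This is a normal unital $*$-homomorphism on $\langle M,e_\varphi\rangle$ extending $\Omega_{G,n}$ and sending $e_\varphi$ to $e_{\varphi^{\omega^n}_G}$; since $U_{G,n}XU_{G,n}^*=U_{G,n}(Xp_n)U_{G,n}^*$ it annihilates the summands $p_m$ with $m\neq n$, and its restriction to $\langle M,e_\varphi\rangle p_n$ is the spatial isomorphism $(Mp_n\cup\{e_\varphi p_n\})''\to (L_{\omega^n}(G)\cup\{e_{\varphi^{\omega^n}_G}\})''=\langle L_{\omega^n}(G),e_{\varphi^{\omega^n}_G}\rangle$, using $U_{G,n}Mp_nU_{G,n}^*=L_{\omega^n}(G)$ from Proposition~\ref{prop:decomp_of_central_extension}.

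The main obstacle I anticipate is the bookkeeping that compression by a central projection is compatible with the basic construction: one must verify that $e_\varphi$ commutes with $Z(M)$ and that, after identifying $p_nL^2(M,\varphi)$ with $L^2(Mp_n,\varphi|_{Mp_n})$, the compressed projection $e_\varphi p_n$ really is the Jones projection for $M^\varphi p_n\subseteq Mp_n$. Once this identification is secured, the remainder is the same spatial transport used for the group von Neumann algebra in Proposition~\ref{prop:decomp_of_central_extension}, now carrying $e_\varphi$ along as well.
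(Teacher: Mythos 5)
Your proposal is correct and follows essentially the same route as the paper's proof: both establish the direct sum by noting $p_n$ lies in the centralizer of $\varphi_{\mathbb{T}\rtimes_{(1,\omega)}G}$ (hence commutes with the Jones projection), and both define $\widetilde{\Omega}_{G,n}$ as conjugation by the partial isometry $U_{G,n}$, checking that it carries $p_n e_{\varphi_{\mathbb{T}\rtimes_{(1,\omega)}G}}$ to $1_{\ker\Delta_G}=e_{\varphi^{\omega^n}_G}$. Your additional bookkeeping identifying the corner $\langle M,e_\varphi\rangle p_n$ with the basic construction of $Mp_n$ is a more explicit rendering of what the paper leaves implicit, not a different argument.
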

\begin{proof}
Assume that $\omega$ is fully normalized (see discussion before \cite[Lemma 2]{Kle74} and \cite[Proposition 2.4]{Sut80}). We denote $G_1:= \ker\Delta_G$, $G(\omega) := \mathbb{T} \rtimes_{(1,\omega)}G$ and $G_1(\omega) := \mathbb{T} \rtimes_{(1,\omega)}G_1$.

Since $G(\omega)$ is almost unimodular, $\varphi_{G(\omega)}$ is strictly semifinite (see \cite[Theorem 2.1]{GGN25}). Note that $e_{\varphi_{G(\omega)}}= 1_{G_1(\omega)}$ commutes with $p_{n}$ for each $n \in \Z$, since $p_{n} \in L(G(\omega))^{\varphi_{G(\omega)}}$. Thus the first claim follows. 

Now for any $\delta \in \Delta_G(G)$, $U_{n}:p_{n}L^2(\Delta^{-1}_{G(\omega)}(\{\delta\})) \to  L^2(\Delta_G^{-1}(\{\delta\}))$ is a unitary, where $U_{n}$ is defined by equation (\ref{eqn:unitary_of_the_extension}). It follows that $U_{n}p_{n}1_{\Delta^{-1}_{G(\omega)}(\{\delta\})} = 1_{\Delta^{-1}_G(\{\delta\})}$. In particular $U_{n}p_{n}e_{\varphi_{G(\omega)}} =1_{G_1}= e_{\varphi^{\omega^n}_G}$. Hence, the map defined by $\widetilde{\Omega}_{G,n}(x) := U_nxU^*_n$ for $x \in \langle L(G(\omega)), \varphi_{G(\omega)}\rangle$ is an extension of $\Omega_{G,n}$ and satisfies the claim.
\end{proof}

Recall that a pair $(\pi,\H)$ is called a \emph{left $(L_\omega(G),\varphi_G^\omega)$-module} if $\pi\colon \<L_\omega(G),e_{\varphi^\omega_G}\> \to B(\H)$ is a normal unital $*$-homomorphism (see \cite[Definition 2.7]{GGLN25}). For such a pair there always exists an ancillary Hilbert space $\K$ and an isometry $v\colon \H \to L^2(G)\otimes \K$ called a \emph{standard intertwiner} satisfying $v\pi(x) = (x\otimes 1)v$ for all $x\in \<L_\omega(G),\varphi^\omega_G\>$ (see \cite[Proposition 2.4]{GGLN25}). The \emph{Murray--von Neumann dimension} of $(\pi,\H)$ is defined as
    \[
        \dim_{(L_\omega(G),\varphi^\omega_G)}(\pi,\H):= (\varphi^\omega_G \otimes \Tr_{\K})\left[ (J_{\varphi^\omega_G}\otimes 1) vv^* (J_{\varphi^\omega_G}\otimes 1)\right],
    \]
and it is independent of $\K$ and $v$ (see \cite[Proposition 2.8]{GGLN25}). Next we prove the twisted group von Neumann algebra analogues of \cite[Theorem 5.8 and Corollary 5.9]{GGN25}.

\begin{thm}\label{thm:general_covolume_dimension_twisted}
Let $G$ be a second countable almost unimodular group with finite covolume subgroup $H \leq G$, let $\omega:G \times G \to \mathbb{T}$ be a 2-cocycle and let $\varphi^\omega_G$ (resp. $\varphi^\omega_H$) be the twisted Plancherel weight on $L_\omega(G)$ (resp. $L_{\omega}(H)$) associated to a left Haar measure $\mu_G$ on $G$ (resp. $\mu_H$ on $H$). For each set $\Delta$ of coset representatives of $\Delta_H(H) \leq \Delta_G(G)$, there exists a unique injective, normal, unital $*$-homomorphism $\theta^\omega_{\Delta} : \langle L_\omega(H),\varphi_H^\omega\rangle \to \langle L_\omega(G),\varphi_G^\omega\rangle$ satisfying 
    \[
        \theta_\Delta^\omega(\lambda^\omega_H(t))= \lambda^\omega_G(t)\quad t\in H, \qquad \text{ and } \qquad \theta^\omega_\Delta(e_{\varphi^\omega_H})=\sum_{\delta\in \Delta} 1_{\Delta_G^{-1}(\{\delta\})}.
    \]
Moreover, if $(\pi,\H)$ is a left $(L_\omega(G),\varphi^\omega_G)$-module, then $(\pi\circ \theta^\omega_\Delta,\H)$ is a left $(L_\omega(H),\varphi^\omega_H)$-module with
    \begin{align}\label{eqn:covolume_dimension_formula}
        \dim_{(L_\omega(H),\varphi^\omega_H)}(\pi\circ\theta^\omega_{\Delta},\H) = \left(\frac{1}{|\Delta|}\sum_{\delta\in \Delta} \delta\right) [\mu_G: \mu_H] \dim_{(L_\omega(G),\varphi^\omega_G)}(\pi,\H).
    \end{align}
\end{thm}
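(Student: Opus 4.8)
The plan is to lift the statement to the central extension, apply the untwisted result \cite[Theorem 5.8]{GGN25}, and then descend through the corner decompositions of Proposition~\ref{prop:decomp_of_central_extension} and Lemma~\ref{lem:decomposition_of_basic_construction}. Assume $\omega$ is fully normalized and set $G(\omega):=\mathbb{T}\rtimes_{(1,\omega)}G$ and $H(\omega):=\mathbb{T}\rtimes_{(1,\omega)}H$. By the discussion preceding Proposition~\ref{prop:finite_covolume_subgroups_twisted}, $G(\omega)$ is second countable almost unimodular, $H(\omega)\leq G(\omega)$ has finite covolume, $\Delta_{G(\omega)}(G(\omega))=\Delta_G(G)$, $\Delta_{H(\omega)}(H(\omega))=\Delta_H(H)$, and $[\mu_\mathbb{T}\times\mu_G:\mu_\mathbb{T}\times\mu_H]=[\mu_G:\mu_H]$. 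Hence a set $\Delta$ of coset representatives of $\Delta_H(H)\leq\Delta_G(G)$ is also one for $\Delta_{H(\omega)}(H(\omega))\leq\Delta_{G(\omega)}(G(\omega))$, and \cite[Theorem 5.8]{GGN25} provides a unique injective, normal, unital $*$-homomorphism $\theta_\Delta\colon\langle L(H(\omega)),\varphi_{H(\omega)}\rangle\to\langle L(G(\omega)),\varphi_{G(\omega)}\rangle$ with $\theta_\Delta(\lambda_{H(\omega)}(a,s))=\lambda_{G(\omega)}(a,s)$ and $\theta_\Delta(e_{\varphi_{H(\omega)}})=\sum_{\delta\in\Delta}1_{\Delta_{G(\omega)}^{-1}(\{\delta\})}$.

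To build $\theta^\omega_\Delta$ I would conjugate $\theta_\Delta$ by the corner isomorphisms $\widetilde{\Omega}_{G,1}$ and $\widetilde{\Omega}_{H,1}$ of Lemma~\ref{lem:decomposition_of_basic_construction} (taking $n=1$, so $\omega^1=\omega$). Since $p_{H,1}$ and $p_{G,1}$ are obtained by integrating $\lambda_{H(\omega)}(b,e)^*$ and $\lambda_{G(\omega)}(b,e)^*$ over $\mathbb{T}$ against the same character, normality of $\theta_\Delta$ gives $\theta_\Delta(p_{H,1})=p_{G,1}$, so $\theta_\Delta$ sends the corner $\langle L(H(\omega)),\varphi_{H(\omega)}\rangle p_{H,1}$ into $\langle L(G(\omega)),\varphi_{G(\omega)}\rangle p_{G,1}$. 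I then set $\theta^\omega_\Delta:=\widetilde{\Omega}_{G,1}\circ\theta_\Delta\circ\widetilde{\Omega}_{H,1}^{-1}$, an injective, normal, unital $*$-homomorphism $\langle L_\omega(H),\varphi^\omega_H\rangle\to\langle L_\omega(G),\varphi^\omega_G\rangle$. Tracing generators through $\widetilde{\Omega}_{H,1}^{-1}(\lambda^\omega_H(t))=\lambda_{H(\omega)}(1,t)p_{H,1}$, $\theta_\Delta(\lambda_{H(\omega)}(1,t)p_{H,1})=\lambda_{G(\omega)}(1,t)p_{G,1}$, and $\widetilde{\Omega}_{G,1}(\lambda_{G(\omega)}(1,t)p_{G,1})=\lambda^\omega_G(t)$ yields $\theta^\omega_\Delta(\lambda^\omega_H(t))=\lambda^\omega_G(t)$; similarly $\widetilde{\Omega}_{H,1}^{-1}(e_{\varphi^\omega_H})=e_{\varphi_{H(\omega)}}p_{H,1}$ together with the identity $\widetilde{\Omega}_{G,1}(1_{\Delta_{G(\omega)}^{-1}(\{\delta\})}p_{G,1})=1_{\Delta_G^{-1}(\{\delta\})}$ from the proof of Lemma~\ref{lem:decomposition_of_basic_construction} give $\theta^\omega_\Delta(e_{\varphi^\omega_H})=\sum_{\delta\in\Delta}1_{\Delta_G^{-1}(\{\delta\})}$. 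Uniqueness is then immediate, since such a homomorphism is determined on the generators $\lambda^\omega_H(t)$ and $e_{\varphi^\omega_H}$ of $\langle L_\omega(H),\varphi^\omega_H\rangle$.

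The crux of the dimension formula is that the Murray--von Neumann dimension is invariant under the corner isomorphism. Given a left $(L_\omega(G),\varphi^\omega_G)$-module $(\pi,\H)$, I would define a left $(L(G(\omega)),\varphi_{G(\omega)})$-module $\tilde\pi(x):=\pi(\widetilde{\Omega}_{G,1}(xp_{G,1}))$ and prove
    \[
        \dim_{(L_\omega(G),\varphi^\omega_G)}(\pi,\H)=\dim_{(L(G(\omega)),\varphi_{G(\omega)})}(\tilde\pi,\H),
    \]
and likewise for $H$. If $v\colon\H\to L^2(G)\otimes\K$ is a standard intertwiner for $(\pi,\H)$, then $\tilde v:=(U_{G,1}^*\otimes1)v$ is a standard intertwiner for $(\tilde\pi,\H)$ (one checks $\tilde v\tilde\pi(x)=(x\otimes1)\tilde v$ using $U_{G,1}^*\widetilde{\Omega}_{G,1}(xp_{G,1})=xU_{G,1}^*$). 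The decisive point, which I expect to be the main obstacle, is that $U_{G,1}$ intertwines the modular conjugations,
    \[
        U_{G,1}J_{\varphi_{G(\omega)}}U_{G,1}^*=J_{\varphi^\omega_G};
    \]
this is a direct computation from the explicit formulas $(J_{\varphi^\omega_G}f)(s)=\Delta_G(s)^{-1/2}\overline{f(s^{-1})}$, $(J_{\varphi_{G(\omega)}}F)(a,s)=\Delta_G(s)^{-1/2}\overline{F(a^{-1},s^{-1})}$ (using $\Delta_{G(\omega)}(a,s)=\Delta_G(s)$ and $(a,s)^{-1}=(a^{-1},s^{-1})$ by full normalization), and $(U_{G,1}^*f)(a,s)=\bar a f(s)$. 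Conjugating $(J_{\varphi_{G(\omega)}}\otimes1)\tilde v\tilde v^*(J_{\varphi_{G(\omega)}}\otimes1)$---which is supported on $p_{G,1}L^2(G(\omega))\otimes\K$ since $J_{\varphi_{G(\omega)}}$ preserves that corner---by $U_{G,1}\otimes1$ produces exactly $(J_{\varphi^\omega_G}\otimes1)vv^*(J_{\varphi^\omega_G}\otimes1)$. Because $\widetilde{\Omega}_{G,1}$ (spatially implemented by $U_{G,1}$) carries the restriction of $\varphi_{G(\omega)}$ to the $p_{G,1}$-corner onto $\varphi^\omega_G$ by Proposition~\ref{prop:decomp_of_central_extension}, the weights $\varphi_{G(\omega)}\otimes\Tr_\K$ and $\varphi^\omega_G\otimes\Tr_\K$ assign equal values, which establishes the claimed equality of dimensions.

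Finally I would assemble everything. The lift of $\pi\circ\theta^\omega_\Delta$ over $\langle L(H(\omega)),\varphi_{H(\omega)}\rangle$ is $x\mapsto\pi(\widetilde{\Omega}_{G,1}(\theta_\Delta(x)p_{G,1}))=\tilde\pi(\theta_\Delta(x))$, using $\theta_\Delta(xp_{H,1})=\theta_\Delta(x)p_{G,1}$; that is, it equals $\tilde\pi\circ\theta_\Delta$. Applying \cite[Theorem 5.8]{GGN25} to $H(\omega)\leq G(\omega)$ with the same $\Delta$ gives
    \[
        \dim_{(L(H(\omega)),\varphi_{H(\omega)})}(\tilde\pi\circ\theta_\Delta,\H)=\left(\frac{1}{|\Delta|}\sum_{\delta\in\Delta}\delta\right)[\mu_\mathbb{T}\times\mu_G:\mu_\mathbb{T}\times\mu_H]\dim_{(L(G(\omega)),\varphi_{G(\omega)})}(\tilde\pi,\H).
    \]
Combining this with the dimension-invariance of the previous paragraph (for both $H$ and $G$) and $[\mu_\mathbb{T}\times\mu_G:\mu_\mathbb{T}\times\mu_H]=[\mu_G:\mu_H]$ yields formula~\eqref{eqn:covolume_dimension_formula}.
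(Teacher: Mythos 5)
Your proposal is correct and follows essentially the same route as the paper's proof: lift via $\mathbb{T}\rtimes_{(1,\omega)}G$, apply \cite[Theorem 5.8]{GGN25}, define $\theta^\omega_\Delta=\widetilde{\Omega}_{G,1}\circ\theta_\Delta\circ\widetilde{\Omega}_{H,1}^{-1}$, and prove dimension invariance by converting a standard intertwiner $v$ into $(U_{G,1}^*\otimes 1)v$ and using that $\varphi_{G(\omega)}\circ\Omega_{G,1}^{-1}=\varphi^\omega_G$. The only difference is expository: you isolate and justify the intertwining relation $U_{G,1}J_{\varphi_{G(\omega)}}U_{G,1}^*=J_{\varphi^\omega_G}$ explicitly, which the paper uses implicitly when it commutes $J_{\varphi_{G(\omega)}}\otimes 1$ past $U_{G,1}^*\otimes 1$ in its dimension computation.
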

\begin{proof}
Assume that $\omega$ is fully normalized (see discussion before \cite[Lemma 2]{Kle74} and \cite[Proposition 2.4]{Sut80}). We denote $G_1:= \ker\Delta_G$, $H_1:=\ker\Delta_H$ and for any subgroup $N$ of $G$, we denote $N(\omega) := \mathbb{T} \rtimes_{(1,\omega)}N$.

By Proposition~\ref{prop:finite_covolume_subgroups_twisted}, $H(\omega)$ is a finite covolume subgroup of the almost unimodular group $G(\omega)$ and we identifying $G(\omega)/ H(\omega) \cong G/H$ via $(a,s)H(\omega) \mapsto sH$, where $(a,s) \in G(\omega)$. Applying \cite[Theorem 5.8]{GGN25} to $H(\omega) \leq G(\omega)$, there exists a unique injective, normal, unital $*$-homomorphism $\theta_\Delta : \langle L(H(\omega)), e_{\varphi_{H(\omega)}} \rangle \to \langle L(G(\omega)), e_{\varphi_{G(\omega)}}\rangle$ satisfying 
    \[
        \theta_\Delta(\lambda_{H(\omega)}(a,s))= \lambda_{G(\omega)}(a,s)\quad (a,s)\in H(\omega), \qquad \text{ and } \qquad \theta_\Delta(e_{\varphi_{H(\omega)}})=\sum_{\delta\in \Delta} 1_{\Delta_{G(\omega)}^{-1}(\{\delta\})}.
    \] 
Notice that for each $n \in \Z$, we have $\theta_\Delta(p_{H,n}) = p_{G,n}$, where $p_{G,n}$ and $p_{H,n}$ are the central projections defined by equation (\ref{eqn:projections_onto_the_twist}) associated to $G(\omega)$ and $H(\omega)$, respectively. Thus, the injective, normal, unital $*$-homomorphism defined by
    \[
        \theta_\Delta^\omega := \widetilde{\Omega}_{G,1} \circ \theta_\Delta \circ \widetilde{\Omega}_{H,1}^{-1}
    \]
gives us the first claim, where $ \widetilde{\Omega}_{G,1}$ and $\widetilde{\Omega}_{H,1}$ are the isomorphisms defined in Lemma~\ref{lem:decomposition_of_basic_construction} associated to $L_\omega(G)$ and $L_\omega(H)$, respectively. 

Now, if $v: \H \to L^2(G) \otimes \K$ is a standard intertwiner for a left $(L_\omega(G),\varphi^\omega_G)$-module $(\pi,\H)$, then it follows that $(U_{G,1}^* \otimes 1)v$ is a standard intertwiner for the left  $(L(G(\omega)), \varphi_{G(\omega)})$-module $(\pi \circ \widetilde{\Omega}_{G,1}, \H)$, where $U_{G,1}$ is the unitary defined by equation (\ref{eqn:unitary_of_the_extension}). Additionally, its Murray--von Neumann dimension is then given by 
    \begin{align*}
        \dim_{(L(G(\omega)),\varphi_{G(\omega)})}(\pi \circ \widetilde{\Omega}_{G,1}, \H) &= (\varphi_{G(\omega)} \otimes \Tr_K)\left[ (J_{\varphi_{G(\omega)}}\otimes 1) (U_{G,1}^* \otimes 1)vv^*(U_{G,1} \otimes 1)J_{\varphi_{G(\omega)}}\otimes 1)\right]\\
        &=(\varphi_{G(\omega)} \otimes \Tr_K)\left[ (U_{G,1}^* \otimes 1)(J_{\varphi^\omega_G}\otimes 1) vv^*(J_{\varphi^\omega_G}\otimes 1)(U_{G,1} \otimes 1)\right]\\
        &=((\varphi_{G(\omega)}\circ\Omega_{G,1}^{-1})  \otimes \Tr_K)\left[(J_{\varphi^\omega_G}\otimes 1) vv^*(J_{\varphi^\omega_G}\otimes 1)\right]\\
        &=(\varphi_G^\omega \otimes \Tr_K)\left[ (J_{\varphi^\omega_G}\otimes 1) vv^*(J_{\varphi^\omega_G}\otimes 1)\right],\\
        &=\dim_{(L_\omega(G),\varphi_G^\omega)}(\pi, \H)
    \end{align*}
where $\Omega_{G,1}$ is the isomorphism of Proposition~\ref{prop:decomp_of_central_extension} associated to $L_\omega(G)$. Thus, $(\pi \circ \theta^\omega_\Delta,\H)$ is a left $(L_\omega(H),\varphi^\omega_H)$-module and by the dimension formula of $\theta_\Delta$ in \cite[Theorem 5.8]{GGN25} and the dimension computation above for $L_\omega(G)$ and $L_\omega(H)$, we have
    \begin{align*}
        \dim_{(L_\omega(H),\varphi_H^\omega)}(\pi\circ\theta_\Delta^\omega, \H)&=\dim_{(L(H(\omega)),\varphi_{H(\omega)})}(\pi \circ \theta^\omega_\Delta\circ\widetilde{\Omega}_{H,1}, \H)\\
        &=\dim_{(L(H(\omega)),\varphi_{H(\omega)})}(\pi \circ \widetilde{\Omega}_{G,1}\circ \theta_\Delta, \H)\\
        &=\left(\frac{1}{|\Delta|}\sum_{\delta\in \Delta} \delta\right) [\mu_\mathbb{T} \times \mu_G: \mu_\mathbb{T}\times\mu_H]\dim_{(L(G(\omega)),\varphi_{G(\omega)})}(\pi \circ \widetilde{\Omega}_{G,1}, \H)\\
        &=\left(\frac{1}{|\Delta|}\sum_{\delta\in \Delta} \delta\right) [\mu_G: \mu_H]\dim_{(L_\omega(G),\varphi_{G^\omega})}(\pi, \H),
    \end{align*}
as claimed.
\end{proof}

When $\Delta_H(H) = \Delta_G(G)$, we can choose $\Delta=\{1\}$. Thus we have the following result, since the above scaling factor becomes $[\mu_G:\mu_H]$ and $\theta_\Delta^\omega(e_{\varphi_H^\omega})=e_{\varphi_G^\omega}$.
\begin{cor}
Let $G$ be a second countable almost unimodular group with finite covolume subgroup $H \leq G$, let $\omega: G \times G \to \mathbb{T}$ be a 2-cocycle and let $\varphi^\omega_G$ (resp. $\varphi^\omega_H$) be the twisted Plancherel weight on $L_\omega(G)$ (resp. $L_{\omega}(H)$) associated to a left Haar measure $\mu_G$ on $G$ (resp. $\mu_H$ on $H$). Suppose $\Delta_H(H) =\Delta_G(G)$. Then identifying $\langle L_\omega(H),\varphi_H^\omega\rangle \cong \langle L_\omega(H),\varphi_G^\omega\rangle\leq \langle L_\omega(G),\varphi_G^\omega\rangle$ one has
    \begin{align}\label{eqn:covolume_dimension_formula_equal_modular_groups}
        \dim_{(L_\omega(H),\varphi^\omega_H)}(\pi,\H) = [\mu_G: \mu_H] \dim_{(L_\omega(G),\varphi^\omega_G)}(\pi,\H),
    \end{align}
for any left $(L_\omega(G),\varphi^\omega_G)$-module $(\pi,\H)$.
\end{cor}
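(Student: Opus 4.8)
The plan is to obtain this as the special case $\Delta=\{1\}$ of Theorem~\ref{thm:general_covolume_dimension_twisted}. First I would observe that when $\Delta_H(H)=\Delta_G(G)$ the subgroup $\Delta_H(H)\leq\Delta_G(G)$ has exactly one coset, namely the whole group, so $\{1\}$ (with $1$ the identity of $\Delta_G(G)$) is a valid set of coset representatives. Feeding $\Delta=\{1\}$ into Theorem~\ref{thm:general_covolume_dimension_twisted} produces the unique injective, normal, unital $*$-homomorphism $\theta_{\{1\}}^\omega\colon\langle L_\omega(H),\varphi_H^\omega\rangle\to\langle L_\omega(G),\varphi_G^\omega\rangle$ with $\theta_{\{1\}}^\omega(\lambda_H^\omega(t))=\lambda_G^\omega(t)$ for all $t\in H$, so there is nothing new to construct.

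Next I would evaluate the two quantities appearing in the conclusion of Theorem~\ref{thm:general_covolume_dimension_twisted} that depend on $\Delta$. Since $|\{1\}|=1$ and its single element is the identity $1\in\Delta_G(G)$, the scaling factor $\frac{1}{|\Delta|}\sum_{\delta\in\Delta}\delta$ collapses to $1$, so the right-hand side of (\ref{eqn:covolume_dimension_formula}) becomes simply $[\mu_G:\mu_H]\dim_{(L_\omega(G),\varphi_G^\omega)}(\pi,\H)$. At the level of Jones projections, the second displayed identity in Theorem~\ref{thm:general_covolume_dimension_twisted} gives $\theta_{\{1\}}^\omega(e_{\varphi_H^\omega})=1_{\Delta_G^{-1}(\{1\})}=1_{\ker\Delta_G}=e_{\varphi_G^\omega}$, where the last equality uses the identification $L^2(L_\omega(G),\varphi_G^\omega)\cong L^2(G)$ under which $e_{\varphi_G^\omega}=1_{\ker\Delta_G}$ (recorded just before Lemma~\ref{lem:decomposition_of_basic_construction}).

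Finally, I would note that these two computations together realize the identification asserted in the statement: the map $\theta_{\{1\}}^\omega$ sends each generator $\lambda_H^\omega(t)$ to $\lambda_G^\omega(t)$ and sends the basic-construction projection of $H$ to that of $G$, so it is simultaneously compatible with the underlying subalgebra inclusion of Proposition~\ref{prop:identification_of_twisted_subgroup_vNa} and with the passage to the basic construction; this justifies writing $\langle L_\omega(H),\varphi_H^\omega\rangle\cong\langle L_\omega(H),\varphi_G^\omega\rangle\leq\langle L_\omega(G),\varphi_G^\omega\rangle$. Specializing the dimension formula (\ref{eqn:covolume_dimension_formula}) to $\Delta=\{1\}$ and inserting the scaling factor $1$ then yields (\ref{eqn:covolume_dimension_formula_equal_modular_groups}) for an arbitrary left $(L_\omega(G),\varphi_G^\omega)$-module $(\pi,\H)$. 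I expect no genuine obstacle here beyond bookkeeping; the only point meriting a line of care is confirming that $\theta_{\{1\}}^\omega$ really is the identification named in the statement, i.e.\ that its restriction to the von Neumann algebra level is the subgroup inclusion while it also carries $e_{\varphi_H^\omega}$ to $e_{\varphi_G^\omega}$, so that both the pulled-back module structure and the dimension computation are the intended ones.
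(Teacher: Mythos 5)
Your proposal is correct and follows exactly the paper's route: the corollary is obtained by specializing Theorem~\ref{thm:general_covolume_dimension_twisted} to $\Delta=\{1\}$, noting that the scaling factor $\frac{1}{|\Delta|}\sum_{\delta\in\Delta}\delta$ becomes $1$ and that $\theta_{\{1\}}^\omega(e_{\varphi_H^\omega})=1_{\ker\Delta_G}=e_{\varphi_G^\omega}$, which yields both the asserted identification of basic constructions and the dimension formula. Your extra care in checking that $\theta_{\{1\}}^\omega$ realizes the named identification matches the paper's (terser) justification.
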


The following results shows when a representation of the twisted group von Neumann algebra can be extend to a representation of the basic construction for the inclusion $L_\omega(G)^{\varphi^\omega_G} \leq L_\omega(G)$; that is, the twisted analogue of \cite[Theorem 5.4]{GGN25}. 
\begin{thm}\label{thm:extending_reps_to_twisted_basic_construction}
Let $G$ be a second countable almost unimodular group $G$ with a 2-cocycle $\omega: G\times G \to \mathbb{T}$ and fix a twisted Plancherel weight $\varphi^\omega_G$ on $L_\omega(G)$. For a representation $(\pi, \H)$ of $L_\omega(G)$, the following are equivalent:
    \begin{enumerate}[label=(\roman*)]
        \item There is a representation $\widetilde{\pi}\colon \<L_\omega(G), e_{\varphi^\omega_G}\>\to B(\H)$ extending $\pi$.
        \item There exists a representation $U\colon \Delta_G(G)\hat{\ }\to U(\H)$ satisfying $U_\gamma \pi(\lambda_G^\omega(s)) U_\gamma^* = (\gamma \mid \Delta_G(s)) \pi(\lambda_G^\omega(s))$.
    \end{enumerate}
In particular, given either of the above representations, there is a unique other representation satisfying
    \begin{align}\label{eqn:Jones_projection_image}
        \widetilde{\pi}(e_{\varphi^\omega_G}) = \int_{\Delta_G(G) \hat{\ }} U_\gamma d\mu_{\Delta_G(G) \hat{\ }}(\gamma),
    \end{align}
where $\mu_{\Delta_G(G) \hat{\ }}$ is the unique Haar probability measure on $\Delta_G(G) \hat{\ }$.
\end{thm}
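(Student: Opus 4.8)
The plan is to transport the untwisted statement \cite[Theorem 5.4]{GGN25} from the central extension down to $L_\omega(G)$, exactly as the dimension computations of this section are transported. Write $G(\omega):=\mathbb{T}\rtimes_{(1,\omega)}G$ and $K:=\Delta_G(G)\hat{\ }$, and assume $\omega$ is fully normalized. By Proposition~\ref{prop:extension_of_almost_unimodularity}, $G(\omega)$ is second countable and almost unimodular, and $\Delta_{G(\omega)}(a,s)=\Delta_G(s)$ gives $\Delta_{G(\omega)}(G(\omega))=\Delta_G(G)$, so the group $K$ is common to both statements.

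First I would lift $(\pi,\H)$ to a representation of the whole algebra $L(G(\omega))$. Since $p_{G,1}$ is a central projection of $L(G(\omega))$ and $\Omega_{G,1}\colon L(G(\omega))p_{G,1}\to L_\omega(G)$ is an isomorphism (Proposition~\ref{prop:decomp_of_central_extension}), the prescription $\rho(x):=\pi(\Omega_{G,1}(xp_{G,1}))$ defines a normal unital $*$-homomorphism $\rho\colon L(G(\omega))\to B(\H)$: multiplicativity uses the centrality of $p_{G,1}$, and unitality uses $\Omega_{G,1}(p_{G,1})=1$. From $\Omega_{G,1}(\lambda_{G(\omega)}(a,s))=(1\mid a)\lambda^\omega_G(s)$ I get $\rho(\lambda_{G(\omega)}(a,s))=(1\mid a)\pi(\lambda^\omega_G(s))$.

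Now apply \cite[Theorem 5.4]{GGN25} to $G(\omega)$ and $\rho$. Its condition (ii) asks for $V\colon K\to U(\H)$ with $V_\gamma\rho(\lambda_{G(\omega)}(a,s))V_\gamma^*=(\gamma\mid\Delta_{G(\omega)}(a,s))\rho(\lambda_{G(\omega)}(a,s))$; since $\Delta_{G(\omega)}(a,s)=\Delta_G(s)$ and the unimodular scalar $(1\mid a)$ cancels on both sides, this is precisely my condition (ii) with $U=V$. To identify the extensions in condition (i), I would invoke Lemma~\ref{lem:decomposition_of_basic_construction}: conjugation by $\widetilde{\Omega}_{G,1}$ turns an extension $\widetilde{\rho}$ of $\rho$ to $\<L(G(\omega)),e_{\varphi_{G(\omega)}}\>$ into $\widetilde{\pi}:=\widetilde{\rho}\circ\widetilde{\Omega}_{G,1}^{-1}$ on $\<L_\omega(G),e_{\varphi^\omega_G}\>$, and vice versa. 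That $\widetilde{\pi}$ is a normal unital representation extending $\pi$ follows because $\widetilde{\Omega}_{G,1}^{-1}(x)=\Omega_{G,1}^{-1}(x)\in L(G(\omega))p_{G,1}$ for $x\in L_\omega(G)$ and $\widetilde{\rho}$ restricts to $\rho$ on $L(G(\omega))$. Running both theorems as biconditionals gives (i) $\Leftrightarrow$ (ii).

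For the Jones projection, \cite[Theorem 5.4]{GGN25} yields $\widetilde{\rho}(e_{\varphi_{G(\omega)}})=\int_K V_\gamma\,d\mu_K(\gamma)$, while Lemma~\ref{lem:decomposition_of_basic_construction} gives $\widetilde{\Omega}_{G,1}(e_{\varphi_{G(\omega)}})=e_{\varphi^\omega_G}$, so $\widetilde{\Omega}_{G,1}^{-1}(e_{\varphi^\omega_G})=e_{\varphi_{G(\omega)}}p_{G,1}$ and hence $\widetilde{\pi}(e_{\varphi^\omega_G})=\widetilde{\rho}(e_{\varphi_{G(\omega)}})\widetilde{\rho}(p_{G,1})=\int_K U_\gamma\,d\mu_K(\gamma)$, using $\widetilde{\rho}(p_{G,1})=\rho(p_{G,1})=1$. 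Uniqueness of the complementary representation transports through the bijections $U=V$ and $\widetilde{\rho}\leftrightarrow\widetilde{\pi}$ from the uniqueness already contained in \cite[Theorem 5.4]{GGN25}. I expect the only real obstacle to be bookkeeping: ensuring $\rho$ is genuinely a \emph{unital} representation of the full algebra $L(G(\omega))$ (it is supported on the central summand $p_{G,1}$, which is sent to $1$) and that the two Jones projections $e_{\varphi_{G(\omega)}}$ and $e_{\varphi^\omega_G}$ correspond correctly under $\widetilde{\Omega}_{G,1}$; no genuinely analytic difficulty appears beyond what the cited results already handle.
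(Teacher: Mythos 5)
Your proposal is correct and takes essentially the same approach as the paper: lift $\pi$ to $L(\mathbb{T}\rtimes_{(1,\omega)}G)$ via $\Omega_{G,1}$, apply \cite[Theorem 5.4]{GGN25} there, and move extensions back and forth using $\widetilde{\Omega}_{G,1}$ from Lemma~\ref{lem:decomposition_of_basic_construction}. Your explicit verification of the Jones projection formula (\ref{eqn:Jones_projection_image}), via $\widetilde{\Omega}_{G,1}^{-1}(e_{\varphi^\omega_G})=e_{\varphi_{\mathbb{T}\rtimes_{(1,\omega)}G}}\,p_{G,1}$ and $\widetilde{\rho}(p_{G,1})=1$, is in fact slightly more detailed than the paper's proof, which leaves that step implicit.
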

\begin{proof}
Assume that $\omega$ is fully normalized (see discussion before \cite[Lemma 2]{Kle74} and \cite[Proposition 2.4]{Sut80}). Denote $G(\omega) := \mathbb{T} \rtimes_{(1,\omega)}G$ and $K := \Delta_G(G)\hat{\ }$. The representation $(\pi,\H)$ can be lifted to a representation of $L(G(\omega))$ by $(\pi \circ \Omega_{G,1}, \H)$, where $\Omega_{G,1}$ is the normal, unital $*$-homomorphism in Proposition \ref{prop:decomp_of_central_extension}. Thus if (ii) holds, then for each $\gamma \in K$ and $(a,s) \in G(\omega)$, we have 
    \[
        U_\gamma \pi (\Omega_{G,1}(\lambda_{G(\omega)}(a,s)))U_\gamma^* =(\gamma \mid \Delta_{G}(s)) a\pi (\lambda_{G}^\omega(s))= (\gamma \mid \Delta_{G(\omega)}(a,s))\pi (\Omega_{G,1}(\lambda_{G(\omega)}(a,s))).
    \]
By \cite[Theorem 5.4]{GGN25}, we can also extend $(\pi \circ \Omega_{G,1}, \H)$ to a representation $(\pi_\omega,\H)$ of $\langle L(G(\omega)),\varphi_{G(\omega)}\rangle$. Hence $(\pi_\omega \circ \widetilde{\Omega}^{-1}_{G,1}, \H)$ is a representation of $\langle L_\omega(G), \varphi^\omega_G\rangle$ that extends $(\pi,\H)$, where $\widetilde{\Omega}_{G_1}$ is the normal, unital $*$-homomorphism in Lemma~\ref{lem:decomposition_of_basic_construction}. 

Conversely, if $(\widetilde{\pi},\H)$ is an extension to the basic construction $\<L_\omega(G), e_{\varphi^\omega_G}\>$, we can further extend to the basic construction $\<L(G(\omega)), e_{\varphi_{G(\omega)}}\>$ by $(\widetilde{\pi} \circ\widetilde{\Omega}_{G,1},\H)$. Then by \cite[Theorem 5.4]{GGN25}, there exists a representation $U\colon K\to U(\H)$ satisfying $U_\gamma \widetilde{\pi}( \widetilde{\Omega}_{G,1}(\lambda_{G(\omega)}(1,s))) U_\gamma^* = (\gamma \mid \Delta_G(s)) \widetilde{\pi}(\widetilde{\Omega}_{G,1}(\lambda_{G(\omega)}(1,s)))$. Hence (ii) holds.
\end{proof}

In \cite[Theorem 5.12]{GGN25}, we generalize the Atiyah--Schmidt formula for finite covolume subgroups of almost unimodular groups. We provide a slight generalization to the case when the almost unimodular group admits a 2-cocycle $\omega$. By Theorem~\ref{thm:sq_int_irred_proj_reps}, we know that irreducible square integrable $\omega$-projective representation $(\pi,\H)$ of an almost unimodular group $G$ is induced by an irreducible square integrable representation $(\pi_1,\H_1)$ of its unimodular part $\ker\Delta_G$. Furthermore, the square integrability implies that these representations admit extensions to $L_\omega(G)$ and $L_\omega(\ker\Delta_G)$, respectively. By Theorem~\ref{thm:proj_reps_induced_from_kernel} and \ref{thm:extending_reps_to_twisted_basic_construction}, there is an extension $\widetilde{\pi}$ of $\pi$ to the basic construction $\langle L_\omega(G), \varphi^\omega_G\rangle$ satisfying $\widetilde{\pi}(e_{\varphi_G^\omega})\H = \H_1$. We omit the details of the following proof since it is similar to how we computed the dimension in Theorem~\ref{thm:general_covolume_dimension_twisted} and uses the Atiyah--Schmidt formula \cite[Theorem 5.12]{GGN25}.

\begin{thm}\label{thm:Atiya-schmidt_formula_twisted}
Let $G$ be a second countable almost unimodular group with finite covolume subgroup $H\leq G$, let $\omega:G \times G \to \mathbb{T}$ be a 2-cocycle, let $\varphi^\omega_G$ (resp. $\varphi^\omega_H$) be the Plancherel weight on $L_\omega(G)$ (resp. $L_\omega(H)$) associated to a left Haar measure $\mu_G$ on $G$ (resp. $\mu_H$ on $H$), and for each set $\Delta$ of coset representatives of $\Delta_H(H)\leq \Delta_G(G)$ let $\theta^\omega_\Delta\colon \<L_\omega(H),e_{\varphi^\omega_H}\> \to \<L_\omega(G),e_{\varphi^\omega_G}\>$ be as in Theorem~\ref{thm:general_covolume_dimension_twisted}. Let $(\pi, \H)$ be an irreducible square integrable $\omega$-projective representation of $G$, let $(\pi_1,\H_1)$ be the irreducible square integrable $\omega$-projective representation of $\ker{\Delta_G}$ that induces $(\pi,\H)$, and let $(\widetilde{\pi},\H)$ be the representation of $\<L_\omega(G),e_{\varphi^\omega_G}\>$ extending $(\pi,\H)$. Then one has
    \[
        \dim_{(L_\omega(H), \varphi^\omega_H)}(\widetilde{\pi}\circ\theta^\omega_\Delta, \H) = d_{\pi_1} \left(\frac{1}{|\Delta|}\sum_{\delta\in \Delta} \delta\right) [\mu_G : \mu_H],
    \]
where $d_{\pi_1}$ is the formal degree of $(\pi_1, \H_1)$ with respect to $\mu_{\ker{\Delta_G}}:=\mu_G|_{\mathcal{B}(\ker{\Delta_G})}$.
\end{thm}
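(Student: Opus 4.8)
The plan is to reduce the whole statement to the untwisted Atiyah--Schmid formula \cite[Theorem 5.12]{GGN25} applied to the central extension, mirroring the dimension computation in Theorem~\ref{thm:general_covolume_dimension_twisted}. After assuming $\omega$ is fully normalized, I would set $G(\omega):=\mathbb{T}\rtimes_{(1,\omega)}G$, $H(\omega):=\mathbb{T}\rtimes_{(1,\omega)}H$, $G_1:=\ker\Delta_G$ and $G_1(\omega):=\mathbb{T}\rtimes_{(1,\omega)}G_1$, and lift $(\pi,\H)$ to the representation $(\pi_\omega,\H)$ of $G(\omega)$ via equation (\ref{eqn:proj_rep_extension}). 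By Proposition~\ref{prop:notion_of_proj_rep_and_rep}, together with the irreducibility remarks preceding Theorem~\ref{thm:sq_int_irred_proj_reps} and \cite[Theorem 2]{Ani06}, the pair $(\pi_\omega,\H)$ is an irreducible square integrable representation of the almost unimodular group $G(\omega)$, and Theorem~\ref{thm:sq_int_irred_proj_reps} identifies it as induced from the irreducible square integrable representation $(\pi_{1,\omega},\H_1)$ of $G_1(\omega)=\ker\Delta_{G(\omega)}$ obtained by lifting $(\pi_1,\H_1)$.

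Next I would transport the given extension $(\widetilde\pi,\H)$ of $\pi$ to an extension of $(\pi_\omega,\H)$. Since $\widetilde\Omega_{G,1}$ of Lemma~\ref{lem:decomposition_of_basic_construction} is a normal unital $*$-homomorphism from $\langle L(G(\omega)),\varphi_{G(\omega)}\rangle$ onto $\langle L_\omega(G),e_{\varphi^\omega_G}\rangle$ with $\widetilde\Omega_{G,1}(e_{\varphi_{G(\omega)}})=e_{\varphi^\omega_G}$, the composition $\widetilde{\pi_\omega}:=\widetilde\pi\circ\widetilde\Omega_{G,1}$ is a representation of $\langle L(G(\omega)),\varphi_{G(\omega)}\rangle$ extending $(\pi_\omega,\H)$. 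Moreover $\widetilde{\pi_\omega}(e_{\varphi_{G(\omega)}})=\widetilde\pi(e_{\varphi^\omega_G})$ has range $\H_1$, so $\widetilde{\pi_\omega}$ is precisely the extension demanded by \cite[Theorem 5.12]{GGN25} for the inclusion $H(\omega)\leq G(\omega)$, which is finite covolume by Proposition~\ref{prop:finite_covolume_subgroups_twisted} under the identification $G(\omega)/H(\omega)\cong G/H$.

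I would then chain the dimension identities. Applying the dimension equality established inside the proof of Theorem~\ref{thm:general_covolume_dimension_twisted} with $H$ in place of $G$ to the left $(L_\omega(H),\varphi^\omega_H)$-module $(\widetilde\pi\circ\theta^\omega_\Delta,\H)$, together with the factorization $\theta^\omega_\Delta=\widetilde\Omega_{G,1}\circ\theta_\Delta\circ\widetilde\Omega_{H,1}^{-1}$ from that theorem, I obtain
\begin{align*}
\dim_{(L_\omega(H),\varphi^\omega_H)}(\widetilde\pi\circ\theta^\omega_\Delta,\H)
&=\dim_{(L(H(\omega)),\varphi_{H(\omega)})}(\widetilde\pi\circ\theta^\omega_\Delta\circ\widetilde\Omega_{H,1},\H)\\
&=\dim_{(L(H(\omega)),\varphi_{H(\omega)})}(\widetilde{\pi_\omega}\circ\theta_\Delta,\H).
\end{align*}
The untwisted Atiyah--Schmid formula \cite[Theorem 5.12]{GGN25} evaluates the last quantity as $d_{\pi_{1,\omega}}\bigl(\tfrac{1}{|\Delta|}\sum_{\delta\in\Delta}\delta\bigr)[\mu_\mathbb{T}\times\mu_G:\mu_\mathbb{T}\times\mu_H]$. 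Finally I would invoke the covolume identity $[\mu_\mathbb{T}\times\mu_G:\mu_\mathbb{T}\times\mu_H]=[\mu_G:\mu_H]$ recorded before Proposition~\ref{prop:finite_covolume_subgroups_twisted}, and the equality of formal degrees $d_{\pi_{1,\omega}}=d_{\pi_1}$, to reach the claimed formula.

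The main obstacle I anticipate is the last formal-degree identification: one must check that the formal degree of $(\pi_{1,\omega},\H_1)$ computed with respect to the Haar measure $\mu_\mathbb{T}\times\mu_{G_1}$ on $G_1(\omega)$ coincides with $d_{\pi_1}$ computed with respect to $\mu_{G_1}=\mu_G|_{\mathcal{B}(G_1)}$. This hinges on $\mu_\mathbb{T}$ being the \emph{normalized} Haar probability measure: the coefficient relation $c_{\xi,\eta}(a,s)=a\,\langle\pi_1(s)\xi,\eta\rangle$ from the proof of Proposition~\ref{prop:notion_of_proj_rep_and_rep}.\ref{part:square_integrable_semi-invariant_weight} gives $\|c_{\xi,\eta}\|_{L^2(\mu_\mathbb{T}\times\mu_{G_1})}=\|c^{G_1}_{\xi,\eta}\|_{L^2(\mu_{G_1})}$, since integrating out the compact fiber $\mathbb{T}$ contributes only the factor $\mu_\mathbb{T}(\mathbb{T})=1$ and leaves the formal degree unchanged (this is the irreducible case of \cite[Theorem 2]{Ani06}). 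Everything else is the bookkeeping of transporting standard intertwiners through the unitaries $U_{G,1}$ and $U_{H,1}$ of equation (\ref{eqn:unitary_of_the_extension}), identical to Theorem~\ref{thm:general_covolume_dimension_twisted}, which is why the remaining details can be omitted.
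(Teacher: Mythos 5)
Your proof is correct and takes essentially the same route as the paper: the author omits the details precisely because they amount to the dimension computation of Theorem~\ref{thm:general_covolume_dimension_twisted} combined with the untwisted Atiyah--Schmid formula \cite[Theorem 5.12]{GGN25}, which is exactly what you carry out via $\widetilde{\Omega}_{G,1}$, $\widetilde{\Omega}_{H,1}$, the factorization $\theta^\omega_\Delta = \widetilde{\Omega}_{G,1}\circ\theta_\Delta\circ\widetilde{\Omega}_{H,1}^{-1}$, and the identifications $[\mu_{\mathbb{T}}\times\mu_G:\mu_{\mathbb{T}}\times\mu_H]=[\mu_G:\mu_H]$ and $d_{\pi_{1,\omega}}=d_{\pi_1}$. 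Your attention to the last point (that the formal degree is unchanged under lifting because $\mu_{\mathbb{T}}$ is normalized, per \cite[Theorem 2]{Ani06}) matches the paper's implicit use of the same fact in Theorem~\ref{thm:sq_int_irred_proj_reps}.
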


Finally, we highlight the special case $\Delta_H(H)=\Delta_G(G)$ in the previous theorem as a corollary:

\begin{cor}
Let $G$ be a second countable almost unimodular group with finite covolume subgroup $H\leq G$, let $\omega:G \times G \to \mathbb{T}$ be a 2-cocycle, and let $\varphi^\omega_G$ (resp. $\varphi^\omega_H$) be the Plancherel weight on $L_\omega(G)$ (resp. $L_\omega(H)$) associated to a left Haar measure $\mu_G$ on $G$ (resp. $\mu_H$ on $H$). Suppose $\Delta_H(H)=\Delta_G(G)$ so that we may identify $\<L_\omega(H),e_{\varphi^\omega_H}\>\cong \<L_\omega(H),e_{\varphi^\omega_G}\> \leq \<L_\omega(G),e_{\varphi^\omega_G}\>$. Let $(\pi, \H)$ be an irreducible square integrable $\omega$-projective representation of $G$, let $(\pi_1,\H_1)$ be the irreducible square integrable $\omega$-projective representation of $\ker{\Delta_G}$ that induces $(\pi,\H)$, and let $(\widetilde{\pi},\H)$ be the representation of $\<L_\omega(G),e_{\varphi^\omega_G}\>$ extending $(\pi,\H)$. Then one has
    \[
        \dim_{(L_\omega(H), \varphi^\omega_H)}(\widetilde{\pi}, \H) = d_{\pi_1} [\mu_G : \mu_H],
    \]
where $d_{\pi_1}$ is the formal degree of $(\pi_1, \H_1)$ with respect to $\mu_{\ker{\Delta_G}}:=\mu_G|_{\mathcal{B}(\ker{\Delta_G})}$.
\end{cor}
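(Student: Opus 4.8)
The plan is to obtain this corollary as the special case $\Delta = \{1\}$ of Theorem~\ref{thm:Atiya-schmidt_formula_twisted}, so essentially no new argument is required. First I would observe that the hypothesis $\Delta_H(H) = \Delta_G(G)$ renders the quotient $\Delta_G(G)/\Delta_H(H)$ trivial, so that the singleton $\Delta := \{1\}$ is a (complete) set of coset representatives and hence a legitimate input for the map $\theta^\omega_\Delta$ supplied by Theorem~\ref{thm:general_covolume_dimension_twisted}. Verifying this admissibility is the only genuine thing to check, and it is immediate from the equality $\Delta_H(H) = \Delta_G(G)$.

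With this choice, the defining relation $\theta^\omega_{\{1\}}(e_{\varphi^\omega_H}) = \sum_{\delta\in\{1\}} 1_{\Delta_G^{-1}(\{\delta\})} = 1_{\ker\Delta_G} = e_{\varphi^\omega_G}$ holds (using $e_{\varphi^\omega_G}=1_{\ker\Delta_G}$ from the discussion preceding Lemma~\ref{lem:decomposition_of_basic_construction}), while $\theta^\omega_{\{1\}}(\lambda^\omega_H(t)) = \lambda^\omega_G(t)$ for $t\in H$. Thus $\theta^\omega_{\{1\}}$ is precisely the inclusion realizing the identification $\<L_\omega(H),e_{\varphi^\omega_H}\> \cong \<L_\omega(H),e_{\varphi^\omega_G}\> \leq \<L_\omega(G),e_{\varphi^\omega_G}\>$ stated in the corollary, and under this identification $\widetilde{\pi}\circ\theta^\omega_{\{1\}}$ is nothing but the restriction of $\widetilde{\pi}$ to the subalgebra, which is the module whose dimension the corollary computes.

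Finally I would substitute $\Delta = \{1\}$ into the conclusion of Theorem~\ref{thm:Atiya-schmidt_formula_twisted}. Since $|\Delta| = 1$ and the unique element of $\Delta$ is the identity $1\in\R_+$, the scaling factor $\frac{1}{|\Delta|}\sum_{\delta\in\Delta}\delta$ collapses to $1$, and the displayed formula reduces to $\dim_{(L_\omega(H),\varphi^\omega_H)}(\widetilde{\pi},\H) = d_{\pi_1}[\mu_G:\mu_H]$, as claimed. There is no real obstacle to surmount: the full content already resides in Theorem~\ref{thm:Atiya-schmidt_formula_twisted}, and the corollary merely records the clean form taken by the general Atiyah--Schmid formula once the modular images of $H$ and $G$ coincide.
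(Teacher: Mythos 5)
Your proposal is correct and matches the paper's own (implicit) argument exactly: the corollary is obtained by taking $\Delta=\{1\}$ in Theorem~\ref{thm:Atiya-schmidt_formula_twisted}, observing that $\theta^\omega_{\{1\}}(e_{\varphi^\omega_H})=1_{\ker\Delta_G}=e_{\varphi^\omega_G}$ so that $\theta^\omega_{\{1\}}$ realizes the stated identification, and noting that the scaling factor $\frac{1}{|\Delta|}\sum_{\delta\in\Delta}\delta$ becomes $1$. Nothing further is needed.
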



\bibliographystyle{amsalpha}
\bibliography{references}

\end{document}